\newcommand\myurl[1]{\url{#1}}
\title{Quotient graphs and amalgam
  presentations for unitary groups over
  cyclotomic rings}
\author{Colin Ingalls, Bruce W. Jordan, Allan Keeton, \\ Adam Logan,  and Yevgeny Zaytman}
\address{School of Mathematics and Statistics, Carleton University, Ottawa, ON K1S 5B6, Canada}
\email{cingalls@math.carleton.ca}
\address{Department of Mathematics, Box B-630, Baruch College,
The City University of New York, One Bernard Baruch Way, New York
NY 10010}
\email{bruce.jordan@baruch.cuny.edu}
\address{Center for Communications Research, 805 Bunn Drive,
Princeton, NJ 08540}
\email{agk@idaccr.org}
\address{The Tutte Institute for Mathematics and Computation,
  P.O. Box 9703, Terminal, Ottawa, ON K1G 3Z4, Canada}
\address{School of Mathematics and Statistics, 4302 Herzberg
  Laboratories, 1125 Colonel By Drive, Ottawa, ON K1S 5B6, Canada}
\email{adam.m.logan@gmail.com}
\address{Center for Communications Research, 805 Bunn Drive,
Princeton, NJ 08540}
\email{ykzaytm@idaccr.org}
\subjclass[2010]{Primary 20G30; Secondary 11R18, 81P45}
\keywords{unitary groups, cyclotomic rings, quotient graphs, 
trees, Clifford-cyclotomic group, amalgams}
\DeclareMathAlphabet{\mathcalligra}{T1}{calligra}{m}{n}
\newcommand{\allan}[1]{{\color{blue}\sf [Allan: #1]}}
\newcommand{\bruce}[1]{{\color{purple}\sf [Bruce: #1]}}
\newcommand{\adam}[1]{{\color{green}\sf [Adam: #1]}}
\newcommand{\gr}{\mathit{gr}}
\newcommand{\Gr}{\mathit{Gr}}
\newcommand{\grn}{\gr\!_n}
\newcommand{\Grn}{\Gr\!_n}
\newcommand{\oGr}{\,\protect\overline{\! \Gr }}
\newcommand{\ogr}{\,\protect\overline{\! \gr }}
\newcommand{\oGrn}{\oGr\!_n}
\newcommand{\ogrn}{\ogr\!_n}
\newcommand{\dbs}{\hspace{0.013in}\backslash\hspace{-0.03in}
  \backslash\hspace{-0.02in}}
\newcommand{\Z}{{\mathbf Z}}
\newcommand{\Grz}{\gr_{0}}
\newcommand{\Grp}{\gr_{+}}
\newcommand{\Gro}{\gr_{1}}
\newcommand{\A}{{\mathbb A}}
\newcommand{\BC}{{\mathbf C}}
\newcommand{\FAAv}{{F^{\times,v}_{\A}}}
\newcommand{\FAAa}{{F^{\times}_{\A}}}
\newcommand{\BR}{{\mathbf R}}
\newcommand{\F}{{\mathbf F}}
\renewcommand{\H}{{\mathbf H}}
\newcommand{\M}{{\mathcal M}}
\newcommand{\sM}{{\mathscr M}}
\newcommand{\Sc}{{\mathcal S}}
\newcommand{\Gg}{{\mathcal G}}
\newcommand{\PGg}{{\mathrm P}{\mathcal G}}
\newcommand{\SGg}{{\mathrm S}{\mathcal G}}
\newcommand{\PSGg}{{\mathrm P}{\SGg}}
\newcommand{\Pic}{{\rm Pic}\, }
\newcommand{\Q}{{\mathbf Q}}
\newcommand{\OO}{{\mathcal O}}
\newcommand{\OAAv}{{\OO^{\times, v}_{\A}}}
\newcommand{\T}{{\mathscr T}}
\newcommand{\p}{{\mathfrak p}}
\newcommand{\fa}{{\mathfrak a}}
\newcommand{\fp}{{\mathfrak p}}
\newcommand{\fq}{{\mathfrak q}}
\newcommand{\fP}{{\mathfrak P}}
\newcommand{\be}{{\mathbf e}}
\newcommand{\bv}{{\mathbf v}}
\newcommand{\bw}{{\mathbf w}}
\newcommand{\cyc}[1]{{K_{#1}}}
\newcommand{\realcyc}[1]{{K_{#1}^+}}
\newcommand{\cycthree}[1]{{L_{#1}}}
\newcommand{\uOO}{\,\protect\underline{\! \OO \!}\,}
\newcommand{\uOOn}{\uOO_n}
\newcommand{\uR}{\underline{\! R \!}\,}
\newcommand{\uRn}{\uR_n}
\newcommand{\ovn}{\overline{\varphi}_n}
\DeclareMathOperator{\VM}{VM}
\DeclareMathOperator{\EM}{EM}
\DeclareMathOperator{\UT}{U_{2}}
\DeclareMathOperator{\UTz}{U_{2}^{\zeta}}
\DeclareMathOperator{\PUT}{PU_{2}}
\DeclareMathOperator{\PUTz}{PU_{2}^{\zeta}}
\newcommand{\PGL}{{\rm PGL}}
\DeclareMathOperator{\PGLT}{PGL_{2}}
\DeclareMathOperator{\PSUT}{PSU_{2}}
\DeclareMathOperator{\SUT}{SU_{2}}
\DeclareMathOperator{\m}{m}
\DeclareMathOperator{\SLT}{SL_{2}}
\newcommand{\pmat}[1]{\begin{pmatrix}#1\end{pmatrix}}
\newcommand{\isisom}{\cong}
\newcommand{\hrel}{h_{\rm rel}}
\newcommand{\hnar}{\tilde{h}}
\newcommand{\sinf}{\ll}
\newcommand{\CC}{\mathcal{C}}
\newcommand{\mO}{{\mathcal O}}
\newcommand{\f}{{\mathfrak f}}
\DeclareMathOperator\Verts{Ver}
\DeclareMathOperator\SL{SL}
\DeclareMathOperator\PU{PU}
\DeclareMathOperator\HH{H}
\DeclareMathOperator\rank{rank}
\DeclareMathOperator\Mattt{\Mat_{2\times 2}}
\newtheorem{theorem}{Theorem}
\newtheorem{prop}[theorem]{Proposition}
\theoremstyle{definition}
\newtheorem{definition}{Definition}
\newtheorem{definition1}[theorem]{Definition}
\theoremstyle{remark}
\newtheorem{remark1}[theorem]{Remark}
\theoremstyle{hypothesis}
\newtheorem{hypothesis}[theorem]{Hypothesis}
\theoremstyle{theorem}
\newtheorem{notation}[theorem]{Notation}
\newtheorem{key}[theorem]{Example Key}
\theoremstyle{key}
\DeclareMathOperator\Gal{Gal}
\DeclareMathOperator\Pro{P}
\DeclareMathOperator\Nm{N}
\DeclareMathOperator\Mat{Mat}
\DeclareMathOperator\Disc{Disc}
\DeclareMathOperator\Norm{Norm}
\DeclareMathOperator\Ver{Ver}
\DeclareMathOperator\Ed{Ed}
\DeclareMathOperator\dist{dist}
\DeclareMathOperator\Cl{Cl}
\DeclareMathOperator\Cln{\widetilde{Cl}}
\DeclareMathOperator\val{val}
\DeclareMathOperator\sig{sig}
\DeclareMathOperator\Coker{Coker}
\DeclareMathOperator\Sel{Sel}
\DeclareMathOperator\Prin{Prin}
\DeclareMathOperator\Prinp{\Prin_{+}}
\DeclareMathOperator\Id{Id}
\patchcmd{\@part}{\null\vfil}{}{}{}
\patchcmd{\@part}{\par}{.\,\,}{}{}
\newcommand{\Gn}{\mathcal{G}_n}
\DeclareMathOperator\genus{\sf g}
\newtheorem{example1}[theorem]{Example}
\theoremstyle{sa}
\theoremstyle{mtheorem}
\theoremstyle{remark}
\DeclareMathOperator\Pp{P}
\newcommand{\PGn}{\Pp\!\Gn}
\newcommand{\PG}{\Pp\!\Gg}
\let\widebar=\overline
\definecolor{fuchsia}{RGB}{255,0,255}
\begin{document}

\maketitle

\begin{abstract}
  Suppose $4|n$, $n\geq 8$, 
  $F=F_n=\Q(\zeta_n+\overline{\zeta}_n)$, and there is one prime
  $\fp=\fp_n$ above $2$ in $F_n$.
  We study amalgam presentations
  for $\PUT(\Z[\zeta_n, 1/2])$ and $\PSUT(\Z[\zeta_n, 1/2])$
  with the Clifford-cyclotomic group in quantum
  computing as a subgroup. These amalgams arise from
  an action of these groups on the Bruhat-Tits tree
  $\Delta =\Delta_{\fp}$ for $\SLT(F_\fp)$
  constructed via the Hamilton quaternions.
  We explicitly compute the finite quotient graphs and the
  resulting amalgams for $8\leq n\leq 48$, $n\neq 44$, as well
  as for $\PUT(\Z[\zeta_{60}, 1/2])$.  
\end{abstract}

\clearpage

\tableofcontents
\clearpage

\section{Introduction}

This is the third in a series of papers devoted to
the structure of unitary groups over cyclotomic rings.
The first of these papers \cite{IJKLZ} concerned the
Euler-Poincar\'{e} characteristic of these groups.
This invariant was sufficient, following Serre, to prove a
conjecture of Sarnak \cite[p.~$15^{\rm IV}$]{S} on
when these groups are generated by the Hadamard gate and the T-gate---two 
specific elements of finite
order \cite[Theorem 1.2]{IJKLZ}.
The second paper \cite{IJKLZ2} analyzed the corank of these
groups, a more difficult invariant than the Euler-Poincar\'{e}
characteristic, but only in the families
$n=2^s$ and $n=3\cdot 2^s$ where simplifications occur.
In this paper we consider the case of general $n$, subject
to the standing assumption that $n=2^sd$, $d$ odd,
$s\geq 2$, $n\geq 8$, and Hypothesis \ref{assume} below:
$\langle 2, -1\rangle =(\Z/d\Z)^\times$.
Here we  continue the method of \cite{IJKLZ2},
analyzing an action of
these groups on Bruhat-Tits trees $\Delta$ together with the
resulting
finite
quotient graphs, with the emphasis on computing examples.

Set $\zeta_n=e^{2\pi i/n}$.
The cyclotomic field $K_n:=\Q(\zeta_n)$
has integers $\OO_n:=\Z[\zeta_n]$ and totally real subfield
$F_n:=K_n^+=\Q(\zeta_n+\overline{\zeta}_n)$ with integers
$\uOOn:=\Z[\zeta_n]^+=\Z[\zeta_n+\overline{\zeta}_n]$. We 
set $R_n:=\OO_n[1/2]$ and $\,\uRn:=R_n^+=\uOO_n[1/2]$.
 By our assumption on $n$,
  the cyclic group of roots of unity in $K_n$ is generated by
  $\zeta_n$ and contains $i$.  Also $F_n\neq \Q$ and the $\uOO_n$-ideal
  $(2)$ is the square of an ideal of $\uOO_n$, which we will denote by
  $\fq=\fq_n$.
  Let $\H$ be the Hamilton quaternions over $\Q$ (the rational quaternion
  algebra ramified precisely at $2$ and $\infty$), and put
  $\H_n=\H\otimes_{\Q} F_n$.
We fix a $\Q$-basis $1$, $i$, $j$, $k$ of
  $\H$ satisfying $i^2=j^2=k^2=-1$, $ij=-ji$, $ik=-ki$, $jk=-kj$.
 The {\em standard} maximal 
 $\uRn$-order of $\H_n$ is
 \[
 \widetilde{\M}_n:=\uRn\langle 1, i, j, (1+i+j+k)/2\rangle.
 \]

Define the Hadamard matrix $H$ and the matrix $T_n$ by 
\begin{equation}
  \label{guestroom}
H:= \frac{1}{2}\left[\begin{array}{rr}
    1+i  & 1+i\\
    1+i & -1-i\end{array}\right]
    \quad\mbox{and}\quad
T_n:=\left[\begin{array}{rr}
    1 & 0\\
    0 & \zeta_n\end{array}\right];
  \end{equation}
we have $H,T_n\in \UT(R_n)$.
The 
{\em Clifford-cyclotomic group} \cite[Section 2.2]{FGKM} (resp.,
{\em special} Clifford-cyclotomic group) is
\begin{equation}
\label{tooth}
\Gg_n=\langle H, T_n\rangle\qquad\mbox{(resp., $\SGg_n=\Gg_n\cap \SUT(R_n)$)}.
\end{equation}
Put
\begin{equation}
\label{groom}
\UTz(R_n)=\{\gamma \in\UT(R_n)\mid \det\gamma\in\langle\zeta_n\rangle\};
\end{equation}
we then have $\Gg_n\subseteq \UTz(R_n)\subseteq \UT(R_n)$.
In general, $\UTz(R_{n})\subsetneq \UT(R_n)$.

Various subgroups and quotient groups of
$\UT(R_n)$ and $\SUT(R_n)$ occur throughout this
paper.
It is convenient to use the following notation:
\begin{notation}
\label{grits}
\end{notation}
\vspace*{.1in}
{\rm
\begin{center}
    \begin{tabular}{c|l}
      $H\leq G$& $H$ is a subgroup of $G$\\ \hline
      $H\unlhd G$ & $H$ is a {\em normal} subgroup of $G$\\ \hline
      $H\ll G$ & $H\leq G$ and $[G:H]=\infty$\\ \hline
      $H\!\!<\!\!\!\!\! \lhd\, G$ & $H\unlhd G$ and $[G:H]=\infty$  \\ \hline
      $H\lesssim G$ & $H\leq G$ and $[G:H]<\infty$\\ \hline
     $H\,\raisebox{-.75ex}{$\stackrel{\mbox{$\lhd$}}{\sim}$}\,G$ & $H\unlhd G$ and $[G:H]<\infty$ 
 \end{tabular}
\end{center}
}
\vspace*{.4in}
\noindent For $H\leq \UT(R_n)$  denote by
$\Pro\!H$ the image of $H$ in $\PUT(R_n)$.
For $H\leq \H_n^\times$, put $H_1=\{h\in H\mid \Nm_{\H_n/F_n}(h)=1\}$; we
have $H_1\unlhd H$.
For a group $G$, denote by $G_f\unlhd G$ the (normal) subgroup 
generated by the elements of $G$ of finite order.
We have the subgroup structure
\begin{align}
\Gg_n\leq \UT(R_n)_f\unlhd & \UTz(R_n)\unlhd \UT(R_n), \quad
\PUTz(R_n)\,\raisebox{-.75ex}{$\stackrel{\mbox{$\lhd$}}{\sim}$}\,\PUT(R_n), 
\quad\mbox{and}\\
\nonumber
 & \SGg_n\leq\SUT(R_n)_{f}\unlhd\SUT(R_n).
  \end{align}
If \mbox{$\UTz(R_n)\neq\UT(R_n)$},
then $\UTz(R_n)\!\!<\!\!\!\!\! \lhd\,\UT(R_n)$.
The structure of $\PGn$ is known from \cite[Theorem 1]{RS}; see
\cite[Theorem 4.1]{IJKLZ}.
\begin{theorem}[Radin and Sadun]
\label{bird}
Let $S_4$ be the symmetric group on $4$ letters and $D_m$ be the
dihedral group of order $2m$.  Then
$\PGn\simeq S_4\ast_{D_4}D_n$.
\end{theorem}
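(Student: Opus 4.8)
The plan is to derive the amalgam from an action on a tree via Bass--Serre theory, the mechanism that organizes the rest of the paper. Concretely, I would first record that $\H_n\otimes_{F_n}F_\fp\cong\Mattt(F_\fp)$, so that the unit group of the standard maximal order $\widetilde{\M}_n$---which realizes $\SUT(R_n)\supseteq\SGg_n$ and whose projectivization carries $\PGn$---acts by isometries on the Bruhat--Tits tree $\Delta=\Delta_\fp$ of $\SLT(F_\fp)$. The isomorphism $\PGn\cong S_4\ast_{D_4}D_n$ will then be read off the quotient graph $\PGn\backslash\Delta$.

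Second, I would pin down the finite subgroups and their matrix realizations. In $\PUT(R_n)$ the image $\overline{T}_n$ is cyclic of order $n$ (as $T_n^k$ is scalar exactly when $n\mid k$), while $H^2=iI$ forces $\overline{H}$ to have order $2$. Setting $S:=T_n^{n/4}=\mathrm{diag}(1,i)$ and $Z:=T_n^{n/2}=\mathrm{diag}(1,-1)$, one computes $HZH=i\left[\begin{smallmatrix}0&1\\1&0\end{smallmatrix}\right]$, so its image $\overline{X}$ inverts $\overline{T}_n$ under conjugation. I would then identify the projective single-qubit Clifford group $\langle\overline{H},\overline{S}\rangle$ with the octahedral group $S_4\le\PU_2\cong\SOT$, the subgroup $\langle\overline{T}_n,\overline{X}\rangle$ with the dihedral group $D_n$, and their intersection $\langle\overline{S},\overline{X}\rangle$ with $D_4$. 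Since $\overline{H}$ and $\overline{T}_n$ already lie in $\langle S_4,D_n\rangle$, these two finite groups generate $\PGn$.

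Third comes the geometric core: exhibit adjacent vertices $v_0,v_1$ of $\Delta$ with edge $e=[v_0,v_1]$ such that $\Stab_{\PGn}(v_0)=S_4$, $\Stab_{\PGn}(v_1)=D_n$, and $\Stab_{\PGn}(e)=D_4$, and show that $e$ is a fundamental domain, i.e.\ that $\PGn\backslash\Delta$ is a single edge. Granting this, the structure theorem for groups acting on trees gives $\PGn\cong\Stab(v_0)\ast_{\Stab(e)}\Stab(v_1)=S_4\ast_{D_4}D_n$. Because $\PGn=\langle S_4,D_n\rangle$, the $\PGn$-orbit of the segment $e$ is a connected invariant subtree, which forces it to exhaust $\Delta$; so the remaining issue is to confirm that $v_0$ and $v_1$ are inequivalent and that no further identifications occur.

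That last point is where the real work lies. I would discharge it with an Euler--Poincar\'e count: the amalgam $S_4\ast_{D_4}D_n$ has $\chi=\tfrac1{|S_4|}+\tfrac1{|D_n|}-\tfrac1{|D_4|}=\tfrac1{24}+\tfrac1{2n}-\tfrac18$, and matching this against the Euler--Poincar\'e characteristic extracted from the action on $\Delta$ as in \cite{IJKLZ} certifies that the quotient is exactly one edge with the stated stabilizers, ruling out both extra orbits and a collapse of $e$. An alternative, purely combinatorial route would be to invoke an exact-synthesis normal form for Clifford-plus-$T_n$ words and read the amalgam from uniqueness of that form; but the tree-theoretic count is cleaner and is the approach consistent with the methods developed here.
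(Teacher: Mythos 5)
Your group-theoretic identifications (the projective Clifford group as $S_4$, $\langle\overline{T}_n,\overline{X}\rangle\cong D_n$, intersection $D_4$) are correct and match what this paper proves in Theorem \ref{thm: tn} and Proposition \ref{pointer}. But the paper does not prove Theorem \ref{bird} at all: it is imported from Radin--Sadun \cite{RS}, whose argument is an explicit analysis of subgroups of the rotation group generated by two finite subgroups, valid for every $n$ with $4\mid n$ and with no hypothesis on the prime $2$; indeed, in Section \ref{part: examples} the paper uses \cite{RS} to identify the sub-amalgam along the $M$--$T$ path with $\PGn$, not the reverse. Your tree strategy (which already requires Hypothesis \ref{assume} for $\Delta$ and finite stabilizers to exist, so it could at best prove a special case) has a concrete geometric gap: the adjacent pair $v_0,v_1$ with $\Stab_{\PGn}(v_0)=S_4$, $\Stab_{\PGn}(v_1)=D_n$, $\Stab_{\PGn}(e)=D_4$ simply does not exist in general. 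For $n=16$, the quotient graph $\oGr_{16}$ shows that all three neighbors of the $\M_{16}$-vertex have stabilizer $D_8$, while the point fixed by $[t_{16}]$ with stabilizer $D_{16}$ is the midpoint of an inverted edge at distance $3/2$ from $\bv$; the same happens for $n=24$. The true configuration is a geodesic from $\bv$ to the fixed point $\bw$ whose Bass--Serre decomposition is an iterated amalgam, e.g.\ $S_4\ast_{D_4}D_8\ast_{D_8}D_{16}$, and one must prove that all intermediate amalgamations telescope away (each intermediate vertex group equals an incident edge group). That telescoping is exactly the hard content; the paper only observes it empirically in its computed examples, and your proposal never addresses it.

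Two further steps of your plan fail independently. First, the assertion that the $\PGn$-orbit of $e$ ``is forced to exhaust $\Delta$'' is false: by \cite[Theorem 1.2]{IJKLZ}, $\PGn\ll\PUT(R_n)$ for all $n\notin\{8,12,16,24\}$, so $\PGn\backslash\Delta$ is an infinite graph and the orbit of one edge is a proper invariant subtree. (This claim is also unnecessary: Serre's theorem applies to the action of $\langle\Stab(v_0),\Stab(v_1)\rangle$ on the orbit subtree, and inequivalence of $v_0,v_1$ is automatic since $S_4\not\cong D_n$.) Second, the Euler--Poincar\'e verification is circular. Since the index $[\PUT(R_n):\PGn]$ is infinite in general, $\chi(\PGn)$ cannot be deduced from $\chi(\PUT(R_n))$; extracting it ``from the action on $\Delta$'' presupposes knowledge of the quotient graph of groups of $\PGn$, which is precisely what is unknown; and the value of $\chi(\Gg_n)$ used in \cite{IJKLZ} is computed \emph{from} the Radin--Sadun presentation, so invoking it assumes the theorem being proved.
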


For certain $n$ there is a natural action of $\UT(R_n)$ and
$\SUT(R_n)$ on a Bruhat-Tits tree $\Delta$ with finite stabilizers
and finite quotient graph. The condition on $n$ for these
finite quotient graphs to exist is:
\begin{hypothesis}
\label{assume}
$\langle 2, -1\rangle =(\Z/d\Z)^\times$ .
\end{hypothesis}
\noindent Hypothesis \ref{assume} implies the following:
\begin{enumerate}[\upshape (a)]
\item
\label{assume1}
There is one prime $\fp=\fp_n$ of $F=F_n$ above $2$ and
$\H=\H_n$ is unramified at $\fp$.
\item
\label{assume2}
There are explicit embeddings
\[
\varphi_n:\PSUT(R_n)\stackrel{\simeq}{\twoheadrightarrow} \Gamma_n\subseteq
\Pp\!\H_{n,1}^\times\quad\text{and}\quad
\overline{\varphi}_n:\PUT(R_n)\stackrel{\simeq}{\twoheadrightarrow}
\overline{\Gamma}_n\subseteq
\Pp\!\H_n^\times
\]
with $\overline{\varphi}_n|_{\PSUT(R_n)}=\varphi_n$
and $\Gamma_n=\Pp\!\widetilde{\M}_{n,1}^\times=\widetilde{\M}_{n,1}/\langle
\pm 1\rangle$, see Section \ref{fiend}.
\item
\label{assume3}
Let $\Delta=\Delta_\fp$ be
the Bruhat-Tits tree for $\SLT(F_{\fp})$.  Then $\PGLT(F_{\fp})$
acts on $\Delta$.  The identifications $\varphi_n$ and $\overline{\varphi}_n$
above give an action of $\PSUT(R_n)$ and $\PUT(R_n)$ on $\Delta$.
There are finite quotient graphs $\grn=\Gamma_n\backslash\Delta$ and
$\ogrn=\overline{\Gamma}_n\backslash\Delta$.
Moreover the stabilizers $\Gamma_{n,\bv}$ and $\overline{\Gamma}_{n,\bv}$
of a vertex $\bv\in\Ver(\Delta)$ in $\Gamma_{n}$ and $\overline{\Gamma}_n$,
respectively, are finite.  Likewise the stabilizers $\Gamma_{n,\be}$
and $\overline{\Gamma}_{n,\be}$ are finite for an edge $\be\in\Ed(\Delta)$.
More generally there are quotient graphs-of-groups
$\Grn=(\Gamma_n,\grn)$ and quotient h-graphs-of-groups 
$\oGrn=(\overline{\Gamma}_n,\ogrn)$.
Knowing $\Grn$ and $\oGrn$ 
gives amalgam presentations of $\PSUT(R_n)\cong\pi_1(\Grn)$ and
$\PUT(R_n)\cong\pi_1(\oGrn)$ as
in~\cite{S2} and Section \ref{groupgraph} of this paper.
\end{enumerate}
\noindent If Hypothesis \ref{assume} is {\em not} satisfied, then
instead of quotient graphs one gets quotient regular cubical
complexes of dimension $d\geq 2$ as in \cite{JL}.
We do not treat these higher-dimensional quotients here.
The first $n$ for which Hypothesis \ref{assume} fails is
$n=68$.

The initial part of this paper, Sections \ref{groupgraph}--\ref{fiend},
establishes the theoretical
foundations for computing examples.  Much of this material
extends the results in \cite{IJKLZ2} for the specific
families $n=2^s$ and $n=3\cdot 2^s$ to general $n$.
The highlights of the paper are in the second part, Sections
\ref{part: examples}--\ref{summary}, where 
we  compute $\Grn$ and $\oGrn$ in MAGMA \cite{BCP}  with
corresponding amalgam presentations
for $\SUT(R_n)$ and $\PUT(R_n)$ for $8\leq n\leq 48$, $4|n$, $n\neq 44$.
We give the quotient h-graph of groups $\oGr_{60}$
and the corresponding amalgam
presentation for $\PUT(R_{60})$.

A surprising feature of the examples is that we are able to identify
$\PGn$ as the fundamental group of a sub h-graph-of-groups of
$\Grn$.  Subgroups of amalgamated products are not in general
sub amalgamated-products.  But we get an amalgamated product
presentation of $\pi_1(\Grn)\cong\PUT(R_n)$ with $\PGn\cong S_4\ast_{D_4}D_n$
as a sub amalgamated-product.

Here is part of the $n=28$ example (see Section \ref{rounded} for the definitions
of the groups):
\begin{example1}
\label{apple}
\textup{ (Section \ref{event}) }
{\rm 
Let $A_m$ (resp., $S_m$)
denote the alternating group (resp., symmetric group) on $m$ letters,
$C_m$ the cyclic group of order $m$, $D_m$ the dihedral group of order
$2m$, and $Q_{2m}$ the quaternion group of order $2m$.
Denote the binary tetrahedral and octahedral groups by $E_{24}$ and
$E_{48}$, respectively.
\begin{enumerate}[\upshape (a)]
\item
$\PUT(R_{28})\cong D_{28}\ast_{C_{28}}D_{28}\ast_{D_4}S_{4}\ast C_2^{\ast 2}
=D_{28}\ast_{C_{28}}\PG_{28}\ast C_2^{\ast 2}$.
\item
$\PGg_{28}\ll [\PUT(R_{28})]_{f}=\PUT(R_{28})$.
\item
$\PUTz(R_{28})\cong S_{4}\ast_{D_4}D_{28}\ast_{C_{28}}D_{28}\ast_{D_4}S_4\ast
\Z^{\ast 2}$ and
\begin{equation*}
\label{wine1}
\Gg_{28}\ll U_{2}(R_{28})_f\!\!<\!\!\!\!\! \lhd\,   \UTz(R_{28})
\!\!<\!\!\!\!\! \lhd\, \UT(R_{28}).
\end{equation*}
\item
$\SUT(R_{28})\cong E_{48}\ast_{Q_8}Q_{56}\ast_{C_{28}}Q_{56}\ast_{Q_8}E_{48}
\ast \Z^{\ast 4}$ and
\begin{equation*}
\label{wine2}
\SGg_{28}\ll \SUT(R_{28})_f
\!\!<\!\!\!\!\! \lhd\, \SUT(R_{28}).
\end{equation*}
\end{enumerate}
Theorem 1.2 of \cite{IJKLZ} already showed that
$\Gg_{28}\ll \UT(R_{28})$ and $\SGg_{28}\ll\SUT(R_{28})$.
However, the explicit presentations  and the further
subgroup results above are new.
}
\end{example1}

\section{H-graphs and h-graphs of groups}
\label{groupgraph}

The standard reference for 
graphs constructed as  quotients of trees
by group actions is Serre's book \cite{S2}.  The generalization to
h-graphs by Kurihara \cite{K}
is treated in \cite[Section 1]{IJKLZ2}, which
we use freely along with \cite{S2}.
Following \cite{S2}, a {\em graph}
has oriented edges $\be$ along with their opposites $\bar\be$, which
are distinct.
In an {\em h-graph}, the definition is
relaxed to allow 
{\it half-edges}, edges $\be$
with $\be=\bar\be$, as in \cite{K}.
Edges $\be$ with
$\overline{\be} \ne \be$ are {\it regular edges}. Write $\Ed_r(\gr)$ and
$\Ed_h(\gr)$ for the collection of regular and half-edges of $\gr$
respectively and $\Ed(\gr) := \Ed_r(\gr) \amalg \Ed_h(\gr)$ for the
set of all edges.  Half-edges $\be$ originate and terminate
at the same vertex $o(\be)=t(\be)$. Every graph is also an 
h-graph.

Suppose $\gr$ is a finite connected h-graph with vertices
$\Ver(\gr )$ and 
$v=v(\gr)
=\#\Ver(\gr)$.
 Set $e_r(\gr)  = \frac{1}{2}\#\Ed_r(\gr)$, 
 $e_h =
 \frac{1}{2}\# \Ed_h(\gr)$, and
$e=e(\gr):= e_r(\gr) + e_h(\gr)$. The fundamental
group
$\pi_1(\gr)$ has abelianization isomorphic
to $H_1(\gr, \Z)$.
The genus 
$\genus(\gr)$ of $\gr$ is the first Betti number $\rank\HH_1(\gr, \Z)$.
By Euler's formula
$\genus(\gr)=1+e_r-v$.

\begin{definition1}
A {\it graph of groups} \cite[Section 5]{S2} is a pair $\Gr = (\Gamma,
\gr)$ with $\gr$ a graph and $\Gamma$ an assignment $\bv \mapsto \Gamma_\bv$,
  $\be\mapsto \Gamma_\be$ of a
group to each
$\bv\in\Ver(\gr)$, $\be\in\Ed(\gr)$
with $\Gamma_{\bar{\be}} = \Gamma_\be$ together with an injection $\Gamma_\be \hookrightarrow \Gamma_{t(\be)}$ (denoted
$g \mapsto g^\be$). For an edge $\be\in\Ed(\gr)$
we have injections $\Gamma_\be \hookrightarrow
\Gamma_{t(\be)}$ and $\Gamma_\be = \Gamma_{\bar{\be}} \hookrightarrow \Gamma_{t(\bar{\be})} = \Gamma_{o(\be)}$
into the vertex groups of the origin and target vertices. The
first sends $g \in \Gamma_\be$ to $g^\be \in \Gamma_{t(\be)}$ and the latter to
$g^{\bar{\be}} \in \Gamma_{o(\be)}$. 
\end{definition1}

A graph of groups $\Gr = (\Gamma,\gr)$ has a fundamental group
$\pi_1(\Gamma, \gr) = \pi_1(\Gr)$ with a surjection 
\begin{equation}
\label{pi_proj}
\pi_1(\Gr) \twoheadrightarrow \pi_1(\gr)
\end{equation}
whose kernel is the normal closure of $\langle \Gamma_\bv : \bv \in \Ver(\gr)\rangle$.

If $\gr$ is connected and acyclic then $\pi_1(\Gr) = \pi_1(\Gamma,
\gr)$ is the amalgamation of the vertex groups over the edge groups.
If $\gr$ is connected but not acyclic, 
choose a spanning tree $T$ by deleting a collection of
edges $E \subset \Ed(\gr)$. Let $(\Gamma, T)$ be the associated subgraph
of groups. The fundamental group of $\Gr$ {\it based at $T$},
$\pi_1(\Gr; T)$, is defined to be the group generated by
$\pi_1(\Gamma, T)$ together with generators
$\{x_{\be}: \be \in E\}$ subject only to the relations that $g^{\be}
x_{\be} = x_\be g^{\bar{\be}}$ for $g \in \Gamma_\be$.

We denote  the free product of the groups $A$ and $B$ by $A\ast B$
with $A^{\ast 1}:=A$ and $A^{\ast n+1}:=A\ast A^{\ast n}$ for $n\geq 1$.
So $\Z^{*n}$ is the free group on $n$ generators.
Since $\# E =
\genus (\gr)$ we  have that 
\begin{equation}
\pi_1(\Gr, T) \simeq \pi_1(\Gamma, T)*\Z^{*{\genus(\gr)}}/\mathcal{R}\, .
\end{equation}
with $\mathcal{R}$ the relations on the $x_{\be}$ described above. It is a
theorem~\cite[Proposition I.20]{S2} that the isomorphism class of 
$\pi_1(\Gr; T)$ does not depend on the choice of spanning tree $T$; we 
therefore denote it by $\pi_1(\Gr)$.
There is a construction for $\pi_1(\Gr)$ that
does not require fixing a spanning tree, but for our purposes (explicit
representations of $\pi_1(\Gr)$ as amalgamated products) choosing a
spanning tree is more convenient. Adjoining the
generators $x_{\be}$ one at a time constructs $\pi_1(\Gr)$ as a
$\genus(\gr)$-fold iterated HNN extension of the amalgam $\pi_1(\Gr, T)$.

A group $\Gamma$ acting on a graph $\gr$ determines a graph of groups
$\Gr := (\Gamma, \gr)$ by assigning the stabilizer group $\Gamma_\be$
or $\Gamma_\bv$ in $\Gamma$ of an edge $\be$ or a vertex $\bv$. If
$\Gamma$ acts without inversions on $\gr$ there is an induced quotient
graph of groups $\Gamma\dbs\Gr$ with underlying graph $\Gamma
\setminus \gr$ defined as follows.  Let $\gr \xrightarrow{\pi} \Gamma
\backslash \gr$ be the quotient map.  For $\bv\in\Ver(\Gamma\backslash
\gr)$ choose $\tilde{\bv}\in\Ver(\gr)$ lying above $\bv$ and set
$\Gamma_{\bv}:= \Gamma_{\tilde{\bv}}$.  Similarly, for
$\be\in\Ed(\Gamma\backslash \gr)$ choose $\tilde{\be}\in\Ed(\gr)$ with
$\pi(\tilde{\be})=\be$ set $\Gamma_{\be}:= \Gamma_{\tilde{\be}}$.  Also choose
an element $g \in \Gamma$ with $t(g\cdot \tilde{\be}) =
\widetilde{t(\be)}$ and define
the injection $\Gamma_{\be}\rightarrow \Gamma_{t(\be)}$ as the
composition
$$ \Gamma_{\be} = \Gamma_{\tilde{\be}} \xrightarrow{g \cdot}
\Gamma_{g\cdot\tilde{\be}} \rightarrow \Gamma_{t(g\cdot \tilde{\be})}
= \Gamma_{\widetilde{t(\be)}} = \Gamma_{t(\be)}\, . $$ The choices of
$\tilde{\be}$, $\tilde{\bv}$, and $g$ are arbitrary, subject to the
above constraints, but once chosen are fixed.  Notice that the maps
$\Gamma_{\be}\rightarrow \Gamma_{t(\be)}$ are only well-defined up to
conjugation by elements of $\Gamma_{t(\be)}$.

\begin{remark1}\label{inj pi}
  {\rm Note that if $\Gr'\subset\Gr$ is a subgraph with all groups given by
  pullback, then there exists a natural injection
  $\pi_1(\Gr')\to\pi_1(\Gr)$.}
\end{remark1} 

The following is a key result in Bass-Serre theory:
\begin{theorem}[~\cite{S2}]\label{theorem: reconstruct}
Let $\Gamma$ be a group which acts without inversions on a tree
$\Delta$ and let $\Gr := \Gamma\dbs(\Gamma, \Delta)$ be the associated
quotient graph of groups.  Then $\Gamma \simeq \pi_1(\Gr)$.
\end{theorem}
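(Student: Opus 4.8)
The plan is to fix a spanning tree $T\subseteq\gr=\Gamma\backslash\Delta$, build an explicit homomorphism $\phi\colon\pi_1(\Gr;T)\to\Gamma$ that realizes the abstract amalgam inside $\Gamma$, and then prove $\phi$ is a bijection. Since $\Delta$ is a tree and $\Gamma$ acts without inversions, I first lift $T$: choosing a base vertex $\tbv_0\in\Ver(\Delta)$ over a base vertex of $\gr$ and propagating along the edges of $T$, the acyclicity of $\Delta$ lets me lift $T$ isomorphically to a subtree $\widetilde{T}\subseteq\Delta$. This fixes vertex representatives $\tbv$ for each $\bv\in\Ver(\gr)$ and edge representatives $\tbe$ with $o(\tbe)=\widetilde{o(\be)}$ (those of $T$ lying in $\widetilde T$), so that $\Gamma_\bv:=\Stab_\Gamma(\tbv)$ and $\Gamma_\be:=\Stab_\Gamma(\tbe)$ are the vertex and edge groups of the quotient graph of groups as constructed in the text. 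For each $\be$ I choose $g_\be\in\Gamma$ with $g_\be\cdot t(\tbe)=\widetilde{t(\be)}$ (possible because both lie over $t(\be)$ and $\Gamma$ is transitive on each fibre), taking $g_\be=1$ when $\be\in T$; these are exactly the elements defining the injections $\Gamma_\be\to\Gamma_{t(\be)}$. I then define $\phi$ on each vertex group $\Gamma_\bv$ by the inclusion $\Stab_\Gamma(\tbv)\hookrightarrow\Gamma$ and on each stable letter $x_\be$, for $\be$ in the deleted set $E=\Ed(\gr)\setminus T$, by $x_\be\mapsto g_\be$. The defining relations hold in $\Gamma$: the amalgam relations along $T$ hold because all the representatives for $T$ sit in the single subtree $\widetilde T$ with $g_\be=1$, and the relation $x_\be\, g^\be\, x_\be^{-1}=g^{\overline{\be}}$ is precisely how the two edge injections were built from $g_\be$. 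Hence $\phi$ is well defined.

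For surjectivity let $\Gamma'=\operatorname{im}\phi$, the subgroup generated by all $\Stab_\Gamma(\tbv)$ and the $g_\be$, and consider $\Delta':=\Gamma'\cdot\widetilde T$. It is connected and $\Gamma'$-invariant, and by construction of the representatives and the $g_\be$ it meets every $\Gamma$-orbit of vertices and of edges; since $\Delta$ is a tree, such a saturated connected subgraph cannot have an edge with exactly one endpoint inside it, so $\Delta'=\Delta$. Then for any $\gamma\in\Gamma$ we have $\gamma\tbv_0\in\Delta'=\Gamma'\widetilde T$, say $\gamma\tbv_0=\gamma'\tbw$ with $\gamma'\in\Gamma'$ and $\tbw\in\Ver(\widetilde T)$; as distinct $\tbv$ lie in distinct $\Gamma$-orbits, $\tbw=\tbv_0$, whence $\gamma^{-1}\gamma'\in\Stab_\Gamma(\tbv_0)\subseteq\Gamma'$ and $\gamma\in\Gamma'$. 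Thus $\phi$ is onto.

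Injectivity is the main obstacle. Here I would import the structure half of Bass--Serre theory for $G:=\pi_1(\Gr;T)$: the group $G$ acts on its universal-cover tree $\widetilde\Delta$, with $\Ver(\widetilde\Delta)=\coprod_\bv G/\Gamma_\bv$ and $\Ed(\widetilde\Delta)=\coprod_\be G/\Gamma_\be$, incidences dictated by $g\mapsto g^\be$ and the letters $x_\be$; connectivity of $\widetilde\Delta$ reflects connectivity of $\gr$, acyclicity is the normal-form theorem for amalgams and HNN extensions, and the natural maps $\Gamma_\bv\to G$ are injective. I then build a $\phi$-equivariant morphism $f\colon\widetilde\Delta\to\Delta$ by $f(1\cdot\Gamma_\bv)=\tbv$, $f(1\cdot\Gamma_\be)=\tbe$, and $f(g\cdot y)=\phi(g)\cdot f(y)$, which is well defined since $\phi(\Gamma_\bv)$ fixes $\tbv$. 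At each vertex the star of $1\cdot\Gamma_\bv$ is $\Gamma_\bv$-equivariantly $\coprod_{t(\be)=\bv}\Gamma_\bv/\Gamma_\be$, and $f$ carries it bijectively onto the edges of $\Delta$ at $\tbv$ because the stabilizers match; so $f$ is locally bijective, and a locally bijective morphism onto a tree (surjectivity from Step 2) is an isomorphism. Finally, if $\phi(g)=1$ then $f(g\cdot y)=f(y)$ for all $y$, so by injectivity of $f$ the element $g$ fixes $\widetilde\Delta$ pointwise; in particular $g$ fixes $1\cdot\Gamma_{\bv_0}$, so $g\in\Gamma_{\bv_0}\subseteq G$, and since $\phi$ restricted to this vertex group is the injection $\Stab_\Gamma(\tbv_0)\hookrightarrow\Gamma$, we get $g=1$. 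Note this argument deliberately never assumes $\Gamma$ (or $G$) acts faithfully on its tree; the real content sits in the acyclicity of $\widetilde\Delta$, i.e. the normal-form theorem, which is exactly what upgrades $f$ from a covering to an isomorphism, and everything else is bookkeeping with the chosen representatives.
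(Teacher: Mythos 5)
Your overall route is the right one, and it is in fact the same route as the source this paper leans on: the paper gives no proof of Theorem \ref{theorem: reconstruct} at all, citing Serre \cite{S2}, and your plan (lift a spanning tree $T$ to $\widetilde T\subseteq\Delta$, send vertex groups to stabilizers and stable letters $x_\be$ to transporter elements $g_\be$, prove surjectivity of $\phi\colon\pi_1(\Gr;T)\to\Gamma$ by an orbit argument, and prove injectivity by comparing $\Delta$ with the Bass--Serre tree $\widetilde\Delta$ of $\pi_1(\Gr;T)$) is precisely Serre's proof. The relation check and the injectivity half are fine as an outline: importing the construction of $\widetilde\Delta$ and its acyclicity (the normal form theorem) is legitimate and not circular, and your deduction that $\phi(g)=1$ forces $g$ to fix every vertex of $\widetilde\Delta$, hence to lie in a vertex group on which $\phi$ restricts to an inclusion of a stabilizer, is correct.

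The genuine gap is in the surjectivity step. You conclude $\Delta'=\Gamma'\cdot\widetilde T=\Delta$ from the principle that a connected, $\Gamma'$-invariant subgraph of a tree meeting every $\Gamma$-orbit of vertices and edges ``cannot have an edge with exactly one endpoint inside it.'' That principle is false. Take $\Gamma$ free on $a,b$ acting on its Cayley tree $\Delta$, let $\Gamma'=\langle a\rangle$, and let $\Delta'$ be the $\langle a\rangle$-orbit of the union of the vertex $1$ with the two edges from $1$ to $a$ and from $1$ to $b$: this subgraph is connected, $\Gamma'$-invariant, and meets the unique vertex orbit and both edge orbits, yet it omits the vertex $ba$ and has many boundary edges. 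So saturation, invariance, and connectedness alone cannot do the work; your deduction uses only those three properties of $\Delta'$, so it is invalid as stated. What the proof must actually use is that $\Gamma'$ contains the vertex stabilizers $\Gamma_{\tbv}$ for $\tbv\in\widetilde T$, via the transporter argument: if $\te$ is an edge of $\Delta$ with origin in $\Delta'$, translate by $\Gamma'$ so that $o(\te)=\tbv\in\widetilde T$; then $\te$ and the chosen representative $\tbe$ of its quotient edge both have origin $\tbv$, so any $\gamma\in\Gamma$ with $\gamma\tbe=\te$ satisfies $\gamma\tbv=\tbv$, i.e.\ $\gamma\in\Gamma_{\tbv}\subseteq\Gamma'$; hence $\te\in\Gamma'\tbe\subseteq\Delta'$ and its terminus $\gamma\, g_\be^{-1}\,\widetilde{t(\be)}$ lies in $\Gamma'\widetilde T$. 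Only this closure-under-adjacency property, combined with connectedness of $\Delta$, yields $\Delta'=\Delta$; your final step recovering $\gamma\in\Gamma'$ from $\gamma\tbv_0\in\Delta'$ is then correct. (Relatedly, for $\Delta'$ to be connected and to meet the orbits of the deleted edges at all, it must be defined as the $\Gamma'$-orbit of $\widetilde T$ together with the representative edges $\tbe$, not of $\widetilde T$ alone.)
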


If $\Gamma$ acts on a graph $\gr$ with inversions, let
$\gr_{\Gamma}$ be the graph obtained from $\gr$ by subdividing exactly those edges
that are inverted by $\Gamma$. By a {\it quotient h-graph of groups} for $\Gamma$
acting on $\gr$ we mean $\Gamma\dbs(\Gamma,
\gr_{\Gamma})$. When drawing h-graphs of groups we label each
vertex and edge with its stabilizer group. We also draw all pairs $\{\be, \bar{\be}\}$ as a
single undirected edge.
In order to make clear the h-graph structure coming from
$\Gamma\setminus \gr$ we elide the extra vertices coming from the barycentric
subdivision of inverted edges in an h-graph of groups. We only label
the stabilizer subgroup of
the corresponding half-edge if it differs from that of the elided
vertex. An example is: 

\begin{center}
  \begin{tikzpicture}
\draw[ultra thick,color = green] (0.4,0)--(2.6,0);
\draw[ultra thick,color = green] (3.4,0)--(4.5,0);
\draw[ultra thick,color = blue] (0, 0) circle [radius = 0.4];
\draw[ultra thick,color = blue] (3, 0) circle [radius = 0.4];
\node at (0,0){$\Gamma_{{\bf v}_0}$};
\node at (1.5,0.3){$\Gamma_{{\be_{0,1}}}$};
\node at (3,0){$\Gamma_{{\bf v}_1}$};
\node at (5,0){$\Gamma_{\bf v}$};
\end{tikzpicture}\,\,\, ,
\end{center} 
where $\bv$ is a vertex lying under a vertex associated to the
barycenter of an inverted edge in the tree.

Now suppose that $\Gamma$ acts with inversions on a tree $\Delta$.
By Theorem \ref{theorem:
  reconstruct}, we know that $\Gamma \simeq \pi_1(\Gamma\dbs(\Gamma,
\Delta_{\Gamma}))$.  If $\Gamma_0
\subset \Gamma$ acts on $\Delta$ without inversions, then we have a
cover $$\Gamma_0 \setminus \Delta \rightarrow \Gamma \setminus
\Delta $$
of an h-graph by a graph. We also have the cover of graphs
$$\Gamma_0 \setminus \Delta_{\Gamma} \rightarrow \Gamma \setminus
\Delta_{\Gamma}\, $$
with the induced group injection
$$\pi_1(\Gamma_0\dbs (\Gamma_0, \Delta_{\Gamma}))
\rightarrow\pi_1(\Gamma\dbs (\Gamma, \Delta_{\Gamma}))\, .$$ By the
following theorem we also have an injection $$\pi_1(\Gamma_0\dbs (\Gamma_0, \Delta))
\rightarrow\pi_1(\Gamma\dbs (\Gamma, \Delta_{\Gamma}))\,  $$ of the
fundamental group of the quotient graph of groups for $\Gamma_0$ acting
without inversions on $\Delta$ onto the fundamental group of the
quotient h-graph of groups for $\Gamma$ acting with inversions
on $\Delta$.
\begin{theorem}\label{theorem: subdivide?}
Let $\Gamma$ act on a graph $\gr$ without inversions and let
$\gr^\prime$ be obtained from $\gr$ by subdividing all the edges in some
set of edge orbits of $\Gamma$. Then
$$\pi_1(\Gamma\dbs (\Gamma, \gr)) \simeq \pi_1(\Gamma\dbs (\Gamma,
\gr^\prime))\, .$$
\end{theorem}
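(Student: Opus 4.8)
The plan is to reduce to the subdivision of a single edge-orbit and then to compare the two quotient graphs of groups term by term, using the spanning-tree presentation of $\pi_1$ recalled above. Since subdividing distinct $\Gamma$-orbits of edges are independent operations, by induction on the number of orbits in the chosen set it suffices to treat the case in which $\gr'$ is obtained from $\gr$ by subdividing a single orbit $\Gamma\cdot\be$ (and $\Gamma\cdot\bar\be$). Because $\Gamma$ acts without inversions, both $\Gamma\backslash\gr$ and $\Gamma\backslash\gr'$ are genuine graphs and the quotient graph of groups is $(\Gamma,\Gamma\backslash\gr)$, resp.\ $(\Gamma,\Gamma\backslash\gr')$. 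On the quotient, subdividing $\Gamma\cdot\be$ amounts to subdividing the single geometric edge $e$ it represents, replacing it by a midpoint vertex $\bw$ and two edges $\be_1,\be_2$. The first step is to record the stabilizers in $\gr'$: writing $C=\Gamma_\be$, the no-inversion hypothesis forces $\Gamma_{\bw}=\Gamma_{\be_1}=\Gamma_{\be_2}=C$, with outer injections $g\mapsto g^{\bar\be_1}=g^{\bar\be}\in\Gamma_{o(\be)}$ and $g\mapsto g^{\be_2}=g^{\be}\in\Gamma_{t(\be)}$, and with the \emph{identity} $C\to\Gamma_{\bw}=C$ on the two inner sides.

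The two structural facts I would isolate are: (i) subdivision preserves genus, since it raises both $e_r$ and $v$ by one and $\genus(\gr)=1+e_r-v$; and (ii) the absorption identity, that gluing a vertex group $A$ to a second vertex group $C$ along an edge with edge group $C$ via an injection $C\hookrightarrow A$ on one side and the identity $C\to C$ on the other returns $A$ (equivalently, inserting into the colimit diagram defining $\pi_1(\Gamma,T)$ an extra node joined by identity maps does not change the colimit). Fact (ii) is exactly what renders the midpoint vertex invisible to the fundamental group.

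Next I would fix a spanning tree $T$ of $\Gamma\backslash\gr$ and produce a compatible spanning tree $T'$ of $\Gamma\backslash\gr'$: if $e\in T$, set $T'=(T\setminus\{e\})\cup\{\be_1,\be_2\}$; if $e\notin T$ (so $e$ is one of the deleted edges $E$, possibly a loop), adjoin $\be_1$ to reach $\bw$ and keep $\be_2$ as a non-tree edge. In either case $\#E$ is unchanged, consistent with (i). Using (ii) to absorb $\Gamma_{\bw}=C$ into its neighbouring vertex group along the tree edge $\be_1$ identifies it with the subgroup $\{g^{\bar\be}\}\subseteq\Gamma_{o(\be)}$, giving an isomorphism $\pi_1(\Gamma,T')\cong\pi_1(\Gamma,T)$. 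I would then match the free generators $x_{\be}$ with their relations $g^{\be}x_{\be}=x_{\be}g^{\bar\be}$: when $e\in T$ the set $E$ and its relations are literally unchanged, and when $e\notin T$ the surviving generator $x_{\be_2}$ inherits precisely the relation of $x_e$, since the absorption of (ii) rewrites the midpoint side of its defining relation as $\{g^{\bar\be}\}\subseteq\Gamma_{o(\be)}$. Through the presentation $\pi_1(\Gr;T)\cong\pi_1(\Gamma,T)\ast\Z^{\ast\genus(\gr)}/\mathcal{R}$ this yields $\pi_1(\Gamma\dbs(\Gamma,\gr))\cong\pi_1(\Gamma\dbs(\Gamma,\gr'))$.

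I expect the main obstacle to be the non-acyclic case: choosing the spanning trees on the two sides compatibly and verifying that no free generator or relation is created or destroyed when the subdivided edge is a non-tree edge, and in particular the loop case $o(e)=t(e)$, where $x_{\be_2}$ must reproduce the HNN relation attached to $x_e$. A slicker but less self-contained alternative would be to pass to the Bass--Serre universal cover tree $\widetilde{\Delta}$ of $G:=\pi_1(\Gamma\dbs(\Gamma,\gr))$, subdivide the $G$-orbits of edges corresponding to the chosen orbits to obtain a tree $\widetilde{\Delta}'$, observe that $G\dbs(G,\widetilde{\Delta}')\cong\Gamma\dbs(\Gamma,\gr')$ (the new midpoints and half-edges carry the old edge groups exactly as above), and invoke Theorem~\ref{theorem: reconstruct} to conclude $G\cong\pi_1(\Gamma\dbs(\Gamma,\gr'))$; this trades the explicit presentation bookkeeping for the existence of the universal cover.
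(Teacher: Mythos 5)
Your proof is correct and follows essentially the same route as the paper's: reduce to a single subdivided edge orbit and observe that inserting a midpoint vertex carrying the edge group with identity maps yields a canonically isomorphic amalgam, i.e.\ $G_0 \ast_G G \ast_G G_1 \cong G_0 \ast_G G_1$. The paper's proof consists only of this local picture, whereas you additionally make explicit the stabilizer computation coming from the no-inversion hypothesis and the spanning-tree/HNN bookkeeping for non-tree and loop edges, details the paper leaves implicit.
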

\begin{proof}
It suffices to consider a single edge in $\Gamma\dbs(\Gamma, \gr)$
\begin{center}
\begin{tikzpicture}
\draw[ultra thick,color = green] (0.3,0)--(5.7,0);
\draw[ultra thick,color = blue] (0, 0) circle [radius = 0.3];
\draw[ultra thick,color = blue] (6, 0) circle [radius = 0.3];
\node at (0,0){$G_0$};
\node at (3.0,0.3){$G$};
\node at (6,0){$G_1$};
\end{tikzpicture}\, \, \, \, .
\end{center} 
If subdivided in $\gr^\prime$ this gives
\begin{center}
\begin{tikzpicture}
\draw[ultra thick,color = green] (0.3,0)--(2.7,0);
\draw[ultra thick,color = green] (3.3,0)--(5.7,0);
\draw[ultra thick,color = blue] (0, 0) circle [radius = 0.3];
\draw[ultra thick,color = blue] (3, 0) circle [radius = 0.3];
\draw[ultra thick,color = blue] (6, 0) circle [radius = 0.3];
\node at (0,0){$G_0$};
\node at (3.0,0.0){$G$};
\node at (1.5,0.3){$G$};
\node at (4.5,0.3){$G$};
\node at (6,0){$G_1$};
\end{tikzpicture}\, \, \, \, ,
\end{center} 
in $\Gamma\dbs(\Gamma, \gr^\prime)$.  The fundamental group of the graph
with the subdivided edge differs from that without only in that
$G_0*_G G_1$ is replaced by $G_0*_G G *_G G_1$, which produces a canonically
isomorphic group.
\end{proof}

To compute amalgamated products for our examples we will need the following two theorems.
\begin{theorem}\label{theorem: free generators}
Suppose $\gr$ has a spanning tree $T$ such that $\Gamma_{\be}$ is
trivial for all $\be \in \Ed(\gr)\setminus \Ed(T)$.  Then $\pi_1(\Gamma,
\gr) \simeq \pi_1(\Gamma, T)*\Z^{*\genus}$, where $\genus = \genus(\gr) =
\#\left(\Ed(\gr) \setminus \Ed(T)\right)$.
\end{theorem}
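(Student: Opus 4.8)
The plan is to read the result directly off the presentation of $\pi_1(\Gr; T)$ recalled above, specialized to the situation in which every deleted edge carries the trivial group. Nothing deep is required beyond unwinding that presentation: the content of the theorem is simply that the amalgamation relations attached to the deleted edges all become vacuous, so the HNN-type generators degenerate into free generators.

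First I would set $E := \Ed(\gr) \setminus \Ed(T)$, taking one edge from each pair $\{\be, \overline{\be}\}$, so that by the count recorded above $\#E = \genus(\gr) =: \genus$. By definition $\pi_1(\Gr; T)$ is generated by $\pi_1(\Gamma, T)$ together with the symbols $\{x_\be : \be \in E\}$, subject only to the relations $g^\be x_\be = x_\be g^{\overline{\be}}$ for $g \in \Gamma_\be$. The hypothesis is precisely that $\Gamma_\be$ is trivial for every $\be \in E$. Hence for each such $\be$ the only available element is $g = 1$, for which both $g^\be$ and $g^{\overline{\be}}$ are the identity, so the relation reads $x_\be = x_\be$ and imposes nothing. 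Thus the relation set $\mathcal{R}$ is empty.

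Consequently the presentation collapses to a free product of $\pi_1(\Gamma, T)$ with the free group on the generators $\{x_\be : \be \in E\}$; since $\#E = \genus$ this free group is $\Z^{*\genus}$, giving $\pi_1(\Gamma, T) * \Z^{*\genus}$. Identifying this with $\pi_1(\Gamma, \gr)$ then uses the independence of $\pi_1(\Gr; T)$ from the chosen spanning tree, via Serre's Proposition I.20 quoted above, which completes the argument.

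As for difficulty, there is essentially no obstacle here, as the statement is a direct specialization of the general formula $\pi_1(\Gr, T) \simeq \pi_1(\Gamma, T) * \Z^{*\genus(\gr)} / \mathcal{R}$. The only point meriting care is bookkeeping: one must use the orientation convention so that the deleted edges are counted as unordered pairs $\{\be,\overline{\be}\}$, matching the identity $\#E = \genus(\gr)$ and the genus formula $\genus(\gr) = 1 + e_r - v$. It is worth emphasizing in the write-up that triviality of $\Gamma_\be$ is exactly the condition that forces the conjugation relations to disappear, so that the $x_\be$ generate freely rather than merely commuting with conjugates of images from the edge groups.
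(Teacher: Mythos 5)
Your proposal is correct and takes essentially the same approach as the paper, whose entire proof is the one-line observation that the additional generators $x_{\be}$ for $\be \in \Ed(\gr)\setminus\Ed(T)$ are subject only to the trivial relations $x_{\be}=x_{\be}$. Your write-up simply spells out in more detail the same unwinding of the presentation of $\pi_1(\Gr;T)$, including the vacuity of the relations $g^{\be}x_{\be}=x_{\be}g^{\bar{\be}}$ when $\Gamma_{\be}$ is trivial and the tree-independence of the resulting group.
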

\begin{proof}
The additional generators $\{x_{\be}: \be \in
\Ed(\gr)\setminus \Ed(T)\}$
are subject only to the trivial relations $x_e = x_e$.
\end{proof}

\begin{theorem}\label{theorem: loop}
  Let $\Gr = (\Gamma, \gr)$ be a graph of groups that consists of a
  single loop such that the stabilizer group of every edge and vertex
  is the same group
$G$ and the induced automorphism of $G$ from the maps around the loop
  is inner.  Then
$\pi_1(\Gr) \simeq G \oplus \Z$.
\end{theorem}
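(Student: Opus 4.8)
The plan is to read $\pi_1(\Gr)$ directly off the definition of the fundamental group based at a spanning tree, recognize it as a semidirect product, and then collapse that to a direct product using the hypothesis that the monodromy is inner.

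First I would fix the combinatorial data. A single loop has one vertex $\bv$ and one regular edge $\be$ with $o(\be)=t(\be)=\bv$, and by hypothesis $\Gamma_\bv=\Gamma_\be=G$. Let $\alpha,\beta\in\Aut(G)$ be the two structural injections $g\mapsto g^{\be}$ and $g\mapsto g^{\bar\be}$ (these are automorphisms, as the phrase ``induced automorphism'' presupposes; in our applications $G$ is finite, so injectivity alone forces it), and let $\phi:=\beta\circ\alpha^{-1}$ be the induced automorphism around the loop. The only spanning tree $T$ is the vertex $\bv$ itself, so $\pi_1(\Gamma,T)=G$, the deleted edge set is $E=\{\be\}$, and $\genus(\gr)=1$.

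Next I would apply the definition of $\pi_1(\Gr;T)$: it is generated by $G$ together with one stable letter $x=x_{\be}$ subject only to the relations $\alpha(g)\,x=x\,\beta(g)$ for all $g\in G$, equivalently $x^{-1}hx=\phi(h)$ for all $h\in G$. Thus $\pi_1(\Gr)=\langle G,x\mid x^{-1}hx=\phi(h)\rangle$, which is the standard presentation of the semidirect product $G\rtimes_\phi\Z$ with $\Z=\langle x\rangle$ acting through $\phi$. Finally I would use the inner hypothesis: writing $\phi(h)=c^{-1}hc$ for a fixed $c\in G$ and setting $s:=x c^{-1}$, a direct computation gives $s^{-1}hs=c\,\phi(h)\,c^{-1}=h$ for all $h\in G$, so $s$ centralizes $G$, while $x=sc$ shows $G$ and $s$ generate $\pi_1(\Gr)$. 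To see $\langle s\rangle\cong\Z$ with $\langle s\rangle\cap G=1$, I would invoke the surjection $\pi_1(\Gr)\twoheadrightarrow\pi_1(\gr)\cong\Z$ of~\eqref{pi_proj}, whose kernel is the normal closure of $G$: it carries $x$ to a generator and kills $c\in G$, hence carries $s=xc^{-1}$ to that same generator, so $s$ has infinite order and meets $G$ trivially. Therefore $\pi_1(\Gr)=G\times\langle s\rangle\cong G\oplus\Z$.

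The only genuinely delicate point is the bookkeeping in the last step: one must pin down the direction conventions for $\alpha,\beta$ and the side on which $c$ conjugates so that the substitution $s=xc^{-1}$ (rather than $xc$ or $cx^{-1}$) actually centralizes $G$. The inverse/orientation conventions defining $\phi$ itself are harmless, since an automorphism is inner if and only if its inverse is, so the hypothesis does not depend on which way one traverses the loop.
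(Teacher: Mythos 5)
Your proof is correct and follows essentially the same route as the paper's: both take the spanning tree consisting of the single vertex, present $\pi_1(\Gr)$ as $G$ together with one stable letter $x$ subject to $g\,x = x\,\phi(g)$, and then use the inner hypothesis to replace $x$ by $x$ times the conjugating element, which is checked to be central by the same one-line computation. Your extra verification that this central element generates an infinite cyclic group meeting $G$ trivially (via the projection $\pi_1(\Gr)\twoheadrightarrow\pi_1(\gr)\cong\Z$) is a detail the paper leaves implicit, but it is the same argument.
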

\begin{proof}
Remove one edge $\be$ to form a spanning tree $T$.
Now $\pi_1(\Gr)$ is generated by $\pi_1(\Gamma,T) = G$ and an
additional generator $x_{\be}$ subject to the constraint $g(x_{\be}h) =
x_{\be} hgh^{-1}h=(x_{\be}h)g$ for some $h$ and all $g \in G = \pi_1(\Gamma, T)$.
\end{proof}
It is clear that
$\pi_1(\Gamma, T)$ lies in the kernel of \eqref{pi_proj}. In the case that
the $\Gamma_\bv$ for $ \bv \in \Ver(\gr)$ are finite, then the kernel of
\eqref{pi_proj} is the subgroup $\pi_1(\Gamma, \gr)_{f}$ generated by
all elements of $\pi_1(\Gamma, \gr)$ of finite order.  In particular, if
$\Gamma$ assigns the trivial group to each edge and vertex in $\gr$, then
\eqref{pi_proj} is an isomorphism.

We will use the following to show that $\PGg_n \sinf (\PUT_n)_f$
in some cases.
\begin{prop}\label{prop:inf-index}
Let $(\Gamma, \gr)$ be a connected graph of groups all of whose edge
groups are finite, and let $S$ a subtree of $\gr$. Let $T$ be a
spanning tree of $\gr$ containing $S$ with $\pi_1(\Gamma, T)$
infinite.  Then either $\pi_1(\Gamma, T) = \pi_1(\Gamma, S)$ or else
$\pi_1(\Gamma, S)\ll \pi_1(\Gamma,T)$.  In the second case
$\pi_1(\Gamma, S) \ll \pi_1(\Gamma, \gr)_{f}$.
\end{prop}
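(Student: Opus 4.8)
The plan is to realize $\pi_1(\Gamma,T)$ as an iterated amalgam built from $\pi_1(\Gamma,S)$ by adjoining, one at a time, the vertex groups of $T$ that are not in $S$, and to read the index off from the amalgamation structure. By Remark~\ref{inj pi}, since $S\subseteq T\subseteq\gr$ are subgraphs carrying the induced (pullback) groups, there are natural injections $\pi_1(\Gamma,S)\hookrightarrow\pi_1(\Gamma,T)\hookrightarrow\pi_1(\Gamma,\gr)$, so $\pi_1(\Gamma,S)$ is genuinely a subgroup. I would first dispose of the case where $\pi_1(\Gamma,S)$ is finite: a finite subgroup of the infinite group $\pi_1(\Gamma,T)$ has infinite index, which places us in the second alternative. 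So I may assume henceforth that $\pi_1(\Gamma,S)$ is infinite.

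Ordering the vertices of $\Ver(T)\setminus\Ver(S)$ by distance from $S$ produces an increasing chain of subtrees $S=T_0\subset T_1\subset\cdots\subset T_k=T$, each $T_{i+1}$ obtained from $T_i$ by attaching a single new vertex $\bw_i$ through a single new edge $\be_i$ whose other endpoint already lies in $T_i$. Writing $G_i=\pi_1(\Gamma,T_i)$, $B_i=\Gamma_{\bw_i}$, and $C_i=\Gamma_{\be_i}$, the fundamental group of a tree of groups gives $G_{i+1}=G_i\ast_{C_i}B_i$, where $C_i$ embeds in $G_i$ through its attaching vertex. Since $G_i\supseteq\pi_1(\Gamma,S)$ is infinite while $C_i$ is finite, we always have $[G_i:C_i]\geq 2$. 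Hence exactly one of two things happens at each step: either $C_i=B_i$, so the amalgamation collapses and $G_{i+1}=G_i$; or $C_i\subsetneq B_i$, so both factors of $G_i\ast_{C_i}B_i$ properly contain $C_i$ and $[G_{i+1}:G_i]=\infty$.

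The crux, which is the main obstacle, is this last index fact, and I would prove it by Bass--Serre theory. The group $G_{i+1}=G_i\ast_{C_i}B_i$ acts on its tree with a fundamental edge, vertex stabilizers conjugate to $G_i$ and to $B_i$, and edge stabilizers conjugate to $C_i$; the valences of the two vertex types are therefore $[G_i:C_i]\geq 2$ and $[B_i:C_i]\geq 2$. A tree in which every vertex has valence $\geq 2$ is infinite (a finite tree has a leaf), so the $G_i$-orbit of the base vertex, whose cardinality is $[G_{i+1}:G_i]$, is infinite. If every step collapses, then $\pi_1(\Gamma,S)=G_0=G_k=\pi_1(\Gamma,T)$, the first alternative. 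Otherwise some step $j$ has $[G_{j+1}:G_j]=\infty$; since $\pi_1(\Gamma,S)=G_0\leq G_j\leq G_{j+1}\leq\pi_1(\Gamma,T)$ and distinct $G_j$-cosets in $G_{j+1}$ lie in distinct $G_0$-cosets of $\pi_1(\Gamma,T)$ (as $G_0\leq G_j$), we obtain $[\pi_1(\Gamma,T):\pi_1(\Gamma,S)]\geq[G_{j+1}:G_j]=\infty$, the second alternative.

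Finally, in the second alternative I would deduce $\pi_1(\Gamma,S)\ll\pi_1(\Gamma,\gr)_f$ from the surjection \eqref{pi_proj}. As already recorded in the text, $\pi_1(\Gamma,T)$ lies in the kernel of $\pi_1(\Gamma,\gr)\twoheadrightarrow\pi_1(\gr)$ because $T$ is a tree; and since the vertex groups are finite this kernel is exactly $\pi_1(\Gamma,\gr)_f$. Thus $\pi_1(\Gamma,S)\leq\pi_1(\Gamma,T)\leq\pi_1(\Gamma,\gr)_f$, and since infinite index is transitive along a chain of subgroups, $\pi_1(\Gamma,S)\ll\pi_1(\Gamma,T)$ yields $\pi_1(\Gamma,S)\ll\pi_1(\Gamma,\gr)_f$. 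Everything outside the amalgam index computation is bookkeeping with the tree structure and the already-established kernel description of $\pi_1(\Gamma,\gr)_f$.
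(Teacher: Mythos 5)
Your proof is correct and takes essentially the same route as the paper's: both realize $\pi_1(\Gamma,T)$ as built from $\pi_1(\Gamma,S)$ by amalgamations over finite edge groups --- the paper collapses $S$ to a single vertex and locates a first vertex at which the group strictly grows, while you attach one leaf at a time and locate the first non-collapsing step --- and both then invoke the standard fact that a factor of a nontrivial amalgamated product has infinite index (the paper cites the normal form theorem of Lyndon--Schupp where you use the Bass--Serre tree). Two caveats, the first of which is shared with the paper's own proof: the final inclusion $\pi_1(\Gamma,T)\leq\pi_1(\Gamma,\gr)_f$ silently uses finiteness of the \emph{vertex} groups (not just the edge groups), and your one-line tree argument needs the small extra observation that in an infinite bipartite tree all of whose valences are at least $2$ both parts of the bipartition are infinite (two distinct vertices of one part share at most one neighbor, since there are no cycles), so that the orbit corresponding to the cosets of $G_j$ is indeed infinite.
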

\begin{proof}
  If $\pi_1(\Gamma, S)$ is finite the result is trivial, so we will assume
  that it is infinite.  
  Suppose that the natural map $\pi_1(\Gamma, S) \to \pi_1(\Gamma, T)$
  is not surjective. Let $T^\prime$ be the tree obtained from
  collapsing $S$ down to a single vertex ${\bf s}$: then
  $\Ver(T^\prime) = (\Ver(T) \setminus \Ver(S)) \cup \{{\bf s}\}$.
  Edges between a vertex $\bv \in \Ver(T) \setminus \Ver(S)$ and 
  $\bw \in \Ver(S)$ now connect $\bv$ to ${\bf s}$.  We make $T^\prime$
  into a graph of groups $(\Gamma^\prime, T^\prime)$ by defining
  $\Gamma^\prime_{\bf s} = \pi_1(\Gamma, S)$ and $\Gamma^\prime_{\bv}
  = \Gamma_{\bv}$ for $v \in \Ver(T) \setminus \Ver S$; the edge
  groups are the same as they were in $(\Gamma, T)$. We claim that
  $\pi_1(\Gamma^\prime,T^\prime) =\pi_1(\Gamma, T)$. Indeed,
  $\pi_1(\Gamma, T)$ is the amalgam of stabilizers of vertices of $T$
  over stabilizers of edges. As $T$ is a tree, it does not matter in
  which order one amalgamates. Now $\pi(\Gamma^\prime,T^\prime)$ is
  obtained from $(\Gamma, T)$ by first amalgamating over $S$ to give
  $(\Gamma^\prime, T^\prime)$ and then doing the remaining
  amalgamations in $T$.  Since $\Gamma_{\bf s} = \pi_1(\Gamma, S) \ne
  \pi_1(\Gamma, T)$, there must be a vertex $\bv \in
  \Ver(T^\prime)\setminus \{{\bf s}\}$ with a path $p$ between $\bv$ and
  ${\bf s}$ such that all the intermediate vertices and edges along it
  have the same group $\Gamma_p$, but $\Gamma_{\bv}\supsetneq\Gamma_p$.
  We also have $\Gamma_{\bf s}=\pi_1(\Gamma, S)$ infinite, hence bigger
  than the finite group $\Gamma_p$.  Thus,
  $\Gamma_{\bv}*_{\Gamma_p} \Gamma_{\bf s}$ is a nontrivial
  amalgamation.  Therefore, by the normal form for amalgams
  \cite[Theorem IV.2.6]{LS}, we have
$$ \pi_1(\Gamma, S) = \Gamma_{\bf s} \ll \Gamma_{\bv} *_{\Gamma_p} \Gamma_{\bf s} < \pi_1(\Gamma^\prime ,T^\prime) = \pi_1(\Gamma, T) < \pi_1(\Gamma,T)_f.$$ 
\end{proof}

\section{Unitary groups over cyclotomic rings}
\label{sec:cyc-rings-fields}

Our notation will be consistent with that of \cite{IJKLZ2}.
We assume $n=2^s d$ with $d$ odd and $s\geq 2$, $n\geq 8$; put
$\zeta_n=e^{2\pi i/n}$.
Let $K_n=\Q(\zeta_n)$.
The ring of integers in $K_n$ is $\OO_n:=\Z[\zeta_n]$ and
its class group is $\Cl(\OO_n)=\Pic(\OO_n)$ with
class number $h_n=\#\Cl(\OO_n)$. Put $R_n=\Z[\zeta_n, 1/2]$.
 If $H\leq K_n^\times$, put
  \mbox{$H_{1}:=\{x\in H\mid x\overline{x}=1\}$}. Let $F_n=\Q(\zeta_n)^+$
  with integers $\uOOn:=\OO_n^+=\Z[\zeta_n+\overline{\zeta}_n]$,
  class group $\Cl(\uOOn)$ with class number $h(\uOOn)=h_n^+$, and
  narrow class group $\widetilde{\Cl}(\uOO_n)$ with narrow class
  number $\tilde{h}(\uOOn)=\tilde{h}^+_n$.
  Then $h_n=h_n^+ h_n^-$.
  Set $\uRn=R_n^+=\uOO_n[1/2]$.
  For a subgroup $G \leq F_n^\times$, let $G_+$ be the subgroup of $G$ consisting
  of totally positive elements: we have $G/G_+ \cong (\Z/2\Z)^{c_G}$, where
  $0 \le c_G \le [F_n:\Q]$.

\subsection{Cyclotomic Fields}
\label{glass6}

Let $\fp_i$, \mbox{$1 \leq i \leq r_+(n)$}, be the $r_+(n)$ 
prime ideals in $\uOO_n$ above the prime ideal $(2)$ of $\Z$.
If there is a unique prime above $(2)$ in $\uOO_n$, we denote it by
$\fp=\fp(n)$.
Let $\fP_{1},\ldots,\fP_{r(n)}$ be the prime ideals of $K_n$
above $(2)$.   If $\fp_i$ splits in $K_n$, then $2r_+(n)=r(n)$;
if $\fp_i$ is inert or ramified in $K_n$, then $r_+(n)=r(n)$.
If there is a unique prime above $(2)$ in $\OO_n$, we denote it by $\fP=\fP(n)$.

\begin{remark1}  
  \label{pincer}
{\rm
  We have $r_+(n)=r(n)$ if and only if $-1\in\langle 2\rangle\subseteq
  (\Z/d\Z)^{\times}$.
  }
\end{remark1}

We must determine various groups of units.
It is well known that $\Z[\zeta_n]^{\times}\cong\mu_{n}\times
\Z^{\phi(n)/2 -1}$ and that $\Z[\zeta_n]^{\times}_{1} = \mu_{n}$.
Further, $R_n^{\times}$ is generated by
$\Z[\zeta_n]^\times$ and one additional generator for each prime
dividing $2$ in $K_n$; it is thus isomorphic to \mbox{$\mu_{n} \times
\Z^{\phi(n)/2-1+r(n)}$}.  Similarly,
$\uRn^\times$ is isomorphic to $\Z/2 \times
\Z^{\phi(n)/2-1+r_+(n)}$ and
$\uR_{n,+}^\times\cong\Z^{\phi(n)/2-1+r_+(n)}$.
Recall that $r(n)$ is either $r_+(n)$ or $2r_+(n)$.

Now consider $R_{n,1}^\times:=(R_n^\times)_1$.
Let $\Nm=\Norm_{K_n/F_n}$.
There is an exact sequence
$$1\to R_{n,1}^\times \to R_n^\times \xrightarrow{\Nm}
\uR_{n,+}^\times \to G \to 1 ,$$ where $G$ is a
finite group since $\Nm(R_n^\times)\supseteq (\uRn^\times)^2$.
%
%
  Thus
\begin{equation}
\label{onehalfone}
R_{n,1}^\times \cong
\begin{cases}
\mu_{n} & r_+(n) = r(n), \\
\mu_{n} \times \Z^{r_+(n)} & 2r_+(n) = r(n); \\
\end{cases}
\end{equation}
a slightly weaker form of this statement is given
in \cite[Theorem 5.3]{FGKM}.
It follows immediately that 
\begin{equation}
\label{ohosq}
R_{n,1}^\times/(R_{n,1}^\times)^2 \cong
\begin{cases}
\mu_{n}/\mu_{n}^2 & r_+(n) = r(n), \\
\mu_{n}/\mu_{n}^2 \times (\Z/2\Z)^{r_+(n)} & 2r_+(n) = r(n). \\
\end{cases}
\end{equation}
Hence from \eqref{ohosq} we get
\begin{equation}
  \label{ohosq1}
  R_{n,1}^\times/(R_{n,1}^\times)^2 \cong (\Z/2\Z)^{1+r(n)-r_+(n)}.
  \end{equation}

 We are interested in the groups
$\UT(R_n)$ and $\SUT(R_n )$.  The group $\SUT(\Z[\zeta_n])$ is finite:
specifically it is the dihedral group of order $2n$.
But $\SUT(R_n)$ (and {\it a fortiori} $\UT(R_n)$) is infinite.
In fact, by strong approximation at the place $2$ \cite[Main Theorem]{Kn}, 
$\UT(R_n )$ is a dense subgroup of $\UT (\BC )$.

 We have natural inclusions
\begin{equation*}
\SUT(R_n)\hookrightarrow \UTz(R_n)\hookrightarrow \UT(R_n) .
\end{equation*}
 For any complex unitary matrix $A$, the condition $A^{-1}=\overline{A}^{t}$
implies that $\alpha = \det (A)$ satisfies $\alpha\overline{\alpha}=1$.
Hence if $A\in \UT(R_n)$, then $\alpha =\det (A)\in R_{n,1}^\times$.
Also, if
$\alpha\in R_{n,1}^{\times}$, then
$\left[\begin{smallmatrix}1&0\\ 0&\alpha \end{smallmatrix}\right]
\in \UT(R_{n})$.
It follows that there is an exact sequence
\begin{equation}
\label{void}
1\longrightarrow\SUT(R_n)\longrightarrow \UT(R_n)\stackrel{\det}{\longrightarrow}
R_{n,1}^{\times}\longrightarrow 1.
\end{equation}
\begin{prop}\textup{ (\cite[Theorem 5.3]{FGKM})}
\label{pinto}
We have $\UTz(R_n)=\UT(R_n)$ if and only if \mbox{$-1\in\langle 2\rangle \subseteq
(\Z/d\Z)^{\times}$}.
\end{prop}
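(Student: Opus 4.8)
The plan is to reduce the assertion to the already-established structure of the norm-one unit group $R_{n,1}^\times$, via the determinant exact sequence \eqref{void}. First I would note that $\langle\zeta_n\rangle=\mu_n$ is precisely the group of roots of unity in $K_n$: since $4\mid n$, the torsion subgroup of $K_n^\times$ is $\mu_n$, generated by $\zeta_n$. Because every root of unity $\zeta$ satisfies $\zeta\overline\zeta=1$, we have $\mu_n\subseteq R_{n,1}^\times$. By the definition \eqref{groom}, $\UTz(R_n)=\{\gamma\in\UT(R_n)\mid\det\gamma\in\mu_n\}$ is exactly the preimage $\det^{-1}(\mu_n)$ under the determinant homomorphism $\det\colon\UT(R_n)\to R_{n,1}^\times$ appearing in \eqref{void}.

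The key point is that this determinant map is surjective onto $R_{n,1}^\times$, as recorded in the exactness of \eqref{void}: for any $\alpha\in R_{n,1}^\times$ the matrix $\left[\begin{smallmatrix}1&0\\ 0&\alpha\end{smallmatrix}\right]$ lies in $\UT(R_n)$ and has determinant $\alpha$. Consequently $\UTz(R_n)=\det^{-1}(\mu_n)$ equals all of $\UT(R_n)=\det^{-1}(R_{n,1}^\times)$ if and only if $\mu_n=R_{n,1}^\times$. Now I would invoke the explicit computation \eqref{onehalfone}: when $r_+(n)=r(n)$ one has $R_{n,1}^\times\cong\mu_n$, whereas when $2r_+(n)=r(n)$ one has $R_{n,1}^\times\cong\mu_n\times\Z^{r_+(n)}$ with $r_+(n)\geq 1$ (there is at least one prime above $2$), so the free part is nontrivial and $R_{n,1}^\times\neq\mu_n$. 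Hence $R_{n,1}^\times=\mu_n$ if and only if $r_+(n)=r(n)$.

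Finally, Remark \ref{pincer} identifies the condition $r_+(n)=r(n)$ with the arithmetic statement $-1\in\langle 2\rangle\subseteq(\Z/d\Z)^\times$. Chaining these equivalences—$\UTz(R_n)=\UT(R_n)\iff R_{n,1}^\times=\mu_n\iff r_+(n)=r(n)\iff-1\in\langle 2\rangle$—yields the claim. I do not anticipate a genuine obstacle, since the substantive content (the determinant sequence, the unit-group structure \eqref{onehalfone}, and the splitting criterion of Remark \ref{pincer}) is already in place; the argument is essentially a diagram chase assembling these ingredients. The only point requiring care is confirming that $\mu_n$ is exactly the torsion subgroup of $R_{n,1}^\times$ in both cases of \eqref{onehalfone}, so that the equality $R_{n,1}^\times=\mu_n$ is equivalent to the vanishing of the free rank.
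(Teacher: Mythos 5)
Your proof is correct and takes essentially the same route as the paper: the paper's own proof consists precisely of the instruction to combine the exact sequence \eqref{void}, the unit-group computation \eqref{onehalfone}, and Remark \ref{pincer}, which is exactly the chain of equivalences $\UTz(R_n)=\UT(R_n)\iff \mu_n=R_{n,1}^\times\iff r_+(n)=r(n)\iff -1\in\langle 2\rangle$ that you assemble. Your write-up simply supplies the details (surjectivity of $\det$, and the identification of $\mu_n$ as the torsion of $R_{n,1}^\times$) that the paper leaves implicit.
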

\begin{proof}
   Combine the exact sequence (\ref{void}) above with Remark \ref{pincer}
  and (\ref{onehalfone}).
\end{proof}

\subsection[$\PUT(R_n),\PUT(R_n),\,
  \PUTz(R_n),\text{ and }
  \PSUT(R_n)$]
{\texorpdfstring{\boldmath{$\PUT(R_n),\,
  \PUTz(R_n),\text{ and }
  \PSUT(R_n)$}}{Unitary groups}}
\label{sec-rat}

We begin our study of unitary groups over cyclotomic rings
by explaining the relationship
between $\PUT(R_n)$, $\PUTz(R_n)$,  and $\PSUT(R_n)$.
  
There is a commutative diagram with exact rows and columns
(to save space we do not indicate the trivial groups on the sides):
\begin{equation}
\label{reds}
  \begin{CD}
   \mu_2 & @>>> & R_{n,1}^\times & @>{(\hspace*{1em})^{2}}>> &
   (R_{n,1}^{\times})^2\\
  @VVV &  & @VVV & & @VVV  \\
\SUT(R_n) & @>>> & \UT(R_n) & @>{\det}>> & R_{n,1}^{\times}\\
  @VVV & & @VVV & & @VVV  \\
\PSUT(R_n) & @>>> & \PUT(R_n) & @>{\det}>> & R_{n,1}^{\times}/
(R_{n,1}^\times)^{2} .
\end{CD}
\end{equation}
 The structure of $R_{n,1}^\times/
(R_{n,1}^\times)^2$ is given in (\ref{ohosq}).
In particular we have
\begin{prop}
  \label{proj}
  \begin{enumerate}[\upshape (a)]
  \item
    \label{aa}
      $\PUT(R_n)/\PSUT(R_n)\isisom (\Z/2\Z)^{1+r(n)-r_+(n)}$.
  \item
    \label{bb}
        $\PUT(R_n)/\PUTz(R_n)\isisom (\Z/2\Z)^{r(n)-r_+(n)}$.
  \item
    \label{cc}
 $\PUTz(R_n)/\PSUT(R_n)\isisom \Z/2\Z.$
\end{enumerate}
  \end{prop}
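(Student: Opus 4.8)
The plan is to read off all three isomorphisms from the bottom row of the commutative diagram \eqref{reds}, using the structural computation of $R_{n,1}^\times/(R_{n,1}^\times)^2$ already recorded in \eqref{ohosq} and \eqref{ohosq1}. The diagram gives the exact sequence
\[
1 \longrightarrow \PSUT(R_n) \longrightarrow \PUT(R_n) \xrightarrow{\;\det\;} R_{n,1}^\times/(R_{n,1}^\times)^2 \longrightarrow 1,
\]
which identifies the quotient $\PUT(R_n)/\PSUT(R_n)$ with $R_{n,1}^\times/(R_{n,1}^\times)^2$. By \eqref{ohosq1} this latter group is $(\Z/2\Z)^{1+r(n)-r_+(n)}$, giving part \eqref{aa} immediately.

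**For part \eqref{bb}**, I would unwind the definition \eqref{groom}: $\UTz(R_n)$ consists of those $\gamma$ with $\det\gamma\in\langle\zeta_n\rangle=\mu_n$. Under the determinant map of \eqref{void}, the image of $\UTz(R_n)$ is exactly $\mu_n\cap R_{n,1}^\times=\mu_n$ (using $\mu_n\subseteq R_{n,1}^\times$, which holds since roots of unity have norm $1$). So $\PUT(R_n)/\PUTz(R_n)$ is identified with $R_{n,1}^\times/\mu_n(R_{n,1}^\times)$, i.e.\ the cokernel of $\mu_n\to R_{n,1}^\times/(R_{n,1}^\times)^2$. From \eqref{onehalfone}, $R_{n,1}^\times$ is either $\mu_n$ (when $r_+(n)=r(n)$) or $\mu_n\times\Z^{r_+(n)}$ (when $2r_+(n)=r(n)$); in either case the $\mu_n$-part of the quotient \eqref{ohosq} is killed, leaving $(\Z/2\Z)^{r(n)-r_+(n)}$. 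This proves \eqref{bb}.

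**For part \eqref{cc}**, I would combine \eqref{aa} and \eqref{bb}: since $\PSUT(R_n)\leq\PUTz(R_n)\leq\PUT(R_n)$, the index $[\PUTz(R_n):\PSUT(R_n)]$ equals the ratio of orders $2^{1+r(n)-r_+(n)}/2^{r(n)-r_+(n)}=2$, so the quotient is $\Z/2\Z$. For the group structure I would give the direct description: $\PUTz(R_n)$ is the preimage of $\mu_n/\mu_n^2\subseteq R_{n,1}^\times/(R_{n,1}^\times)^2$ under $\det$, and the cyclic factor $\mu_n/\mu_n^2\cong\Z/2\Z$ in \eqref{ohosq} is precisely the image of $\PUTz(R_n)$ modulo $\PSUT(R_n)$. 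The cleanest formulation is that the determinant induces an isomorphism $\PUTz(R_n)/\PSUT(R_n)\xrightarrow{\sim}\mu_n/\mu_n^2\cong\Z/2\Z$, where the last identification uses that $\mu_n$ is cyclic of even order $n$.

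**The main subtlety** I would want to verify carefully is the computation of the image of the various subgroups under $\det$ in the bottom row of \eqref{reds} — specifically that $\det$ restricted to $\PUTz$ lands in and surjects onto the $\mu_n/\mu_n^2$ factor, which requires checking that the map $\mu_n\to R_{n,1}^\times/(R_{n,1}^\times)^2$ factors through $\mu_n/\mu_n^2$ and that the resulting $\Z/2\Z$ is a direct summand as claimed in \eqref{onehalfone}--\eqref{ohosq}. Everything else is formal diagram-chasing and order-counting, with Remark~\ref{pincer} available to interpret the cases $r_+(n)=r(n)$ versus $2r_+(n)=r(n)$ if needed.
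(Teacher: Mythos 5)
Your proposal is correct and takes essentially the same approach as the paper: all three parts are read off from diagram \eqref{reds} together with the unit computations \eqref{onehalfone}--\eqref{ohosq1}. The only difference is the order of deduction---the paper proves \eqref{cc} directly (the determinant surjects onto $\mu_{n}/\mu_{n}^2\isisom\Z/2\Z$) and then obtains \eqref{bb} ``trivially,'' whereas you prove \eqref{bb} directly via the cokernel computation and recover \eqref{cc} by index counting; this permutation is inessential (and your parenthetical ``$R_{n,1}^\times/\mu_n(R_{n,1}^\times)$'' is a harmless typo for $R_{n,1}^\times/\mu_n(R_{n,1}^\times)^2$, as your own ``i.e.'' clause makes clear).
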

\begin{proof}
  By diagram (\ref{reds}) $\PUT(R_n)/\PSUT(R_n)\isisom R_{n,1}^\times/
  (R_{n,1}^\times)^{2}$, hence \eqref{aa} follows from (\ref{ohosq1}).

  Similarly, $\PUTz(R_n)/\PSUT(R_n)\isisom \mu_{n}/\mu_{n}^2\isisom
  \Z/2\Z$ since by diagram (\ref{reds}) the determinant map is
  surjective.  The claim \eqref{cc} follows.

Assertion \eqref{bb} now follows trivially.
\end{proof}

If $r(n)_+=r(n)$, i.e., if primes above $2$ in $F_n$
do not split in $K_n$, then the commutative diagram \eqref{reds}
becomes
\begin{equation}
  \label{reds1}
\begin{CD}
\mu_2 & @>>> & \mu_{n}& @>{(\hspace*{1em})^{2}}>> & \mu_{n}^2\\
  @VVV &  & @VVV & & @VVV  \\
\SUT(R_n) & @>>> & \UT(R_n) & @>{\det}>> & \mu_{n}\\
  @VVV & & @VVV & & @VVV  \\
\PSUT(R_n) & @>>> & \PUT(R_n) & @>{\det}>> & \mu_{n}/
\mu_{n}^{2} .
\end{CD}
\end{equation}

Proposition \ref{punt} below is elementary.
  \begin{prop}
    \label{punt}
    The following are equivalent:
    \begin{enumerate}[\upshape (a)]
    \item
      There is a unique prime $\fp$ of $F_n$ above $2$, i.e., $r_+(n)=1$.
      \item
    We have
    $\langle 2,\, -1\rangle = (\Z/d\Z)^{\times}$.
\end{enumerate}
    \end{prop}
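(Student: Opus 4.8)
The plan is to translate both conditions into the arithmetic of the Galois group $G=\Gal(K_n/\Q)\cong(\Z/n\Z)^\times$ and to express the number $r_+(n)$ of primes of $F_n$ above $2$ as an index in $G$. Writing $n=2^s d$ with $d$ odd, I would first invoke the Chinese Remainder decomposition $(\Z/n\Z)^\times\cong(\Z/2^s\Z)^\times\times(\Z/d\Z)^\times$. Under this isomorphism complex conjugation $c$ corresponds to $(-1,-1)$, and $F_n=K_n^{\langle c\rangle}$ is the fixed field of $\langle c\rangle$.

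The key input is the decomposition group $D$ of a prime above $2$ in $G$. Since $d$ is odd, $2$ is unramified in $\Q(\zeta_d)$ and totally ramified in $\Q(\zeta_{2^s})$; hence the inertia group is $I=(\Z/2^s\Z)^\times\times\{1\}$, while the residual Frobenius generates the cyclic subgroup $\langle 2\rangle$ of $(\Z/d\Z)^\times$. Consequently $D=(\Z/2^s\Z)^\times\times\langle 2\rangle$, and the number of primes of $K_n$ above $2$ is $r(n)=[(\Z/d\Z)^\times:\langle 2\rangle]$.

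Because $G$ is abelian, the number of primes of $F_n=K_n^{\langle c\rangle}$ above $2$ equals the index $[G:\langle c\rangle\,D]$, since the double cosets $\langle c\rangle\backslash G/D$ reduce to ordinary cosets of $\langle c\rangle\,D$. I would then compute this subgroup: as $s\geq 2$ we have $(-1,1)\in(\Z/2^s\Z)^\times\times\{1\}\subseteq D$, so multiplying $(-1,-1)$ by $(-1,1)^{-1}$ shows $(1,-1)\in\langle c\rangle\,D$. Therefore $\langle c\rangle\,D=(\Z/2^s\Z)^\times\times\langle 2,-1\rangle$, which yields $r_+(n)=[(\Z/d\Z)^\times:\langle 2,-1\rangle]$.

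The equivalence now falls out immediately: $r_+(n)=1$ precisely when $\langle 2,-1\rangle=(\Z/d\Z)^\times$, which is exactly (a)$\Leftrightarrow$(b). As a consistency check, comparing the two index formulas gives $r_+(n)=r(n)$ iff $\langle 2\rangle=\langle 2,-1\rangle$ iff $-1\in\langle 2\rangle$, recovering Remark \ref{pincer}. The only step demanding genuine care—the closest thing to an obstacle—is the correct identification of $D$, namely that the inertia part is exactly the $2$-power factor and the residual Frobenius is the class of $2$ in $(\Z/d\Z)^\times$; once this is in hand, everything else is formal index bookkeeping in an abelian group.
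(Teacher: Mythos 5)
Your proof is correct, and it takes a genuinely different route from the paper's. The paper dismisses the proposition as elementary and prints no argument; the proof it has in mind is a field-theoretic reduction to the odd part: condition (b) is equivalent to there being a unique prime above $2$ in $\Q(\zeta_d)^+$, and uniqueness of the prime above $2$ then transfers between $\Q(\zeta_d)^+$ and $F_n=\Q(\zeta_n)^+$ because $F_n$ is a quadratic extension of the compositum of $\Q(\zeta_{2^s})^+$ and $\Q(\zeta_d)^+$ in which $2$ ramifies (ramified extensions cannot split primes further in number, so the count above $2$ is preserved). You instead work entirely inside $G=\Gal(K_n/\Q)\cong(\Z/2^s\Z)^\times\times(\Z/d\Z)^\times$: your identification of the inertia subgroup as $(\Z/2^s\Z)^\times\times\{1\}$ and of the Frobenius image as the class of $2$ is correct, so $D=(\Z/2^s\Z)^\times\times\langle 2\rangle$; the passage from double cosets $\langle c\rangle\backslash G/D$ to ordinary cosets of $\langle c\rangle D$ is valid since $G$ is abelian; and the absorption of the sign, $(1,-1)\in\langle c\rangle D$, works because $-1$ lies in the first factor of $D$ (no hypothesis on $s$ is even needed for this step, since $D$ contains the whole first factor). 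What your computation buys is strictly more than the proposition: the closed formulas $r(n)=[(\Z/d\Z)^\times:\langle 2\rangle]$ and $r_+(n)=[(\Z/d\Z)^\times:\langle 2,-1\rangle]$ give, as immediate corollaries, Remark \ref{pincer} and the equivalence (a)$\Leftrightarrow$(b) of Proposition \ref{hearts}, which the paper handles separately. The paper's route is shorter and avoids decomposition groups explicitly, but its opening step ("(b) is obviously equivalent to uniqueness of the prime above $2$ in $\Q(\zeta_d)^+$") is exactly your index computation specialized to the unramified field $\Q(\zeta_d)$, so your argument can be viewed as making the paper's elementary claim fully explicit and uniform.
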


\begin{prop}
  \label{hearts}
  The following are equivalent:
  \begin{enumerate}[\upshape (a)]
  \item
    \label{cow}
      $r(n)=r_+(n)$.
  \item
    \label{calf}
        $-1\in\langle 2\rangle\subseteq \left(\Z/d\Z\right)^{\times}$.
  \item
    \label{sheep}
          $\PUT(R_n)/\PSUT(R_n)\cong \Z/2\Z$.
  \item
    \label{duck}
          $\PUT(R_n)=\PUTz(R_n)$.
          \end{enumerate}
\end{prop}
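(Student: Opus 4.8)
The plan is to route every equivalence through the single numerical condition $r(n)=r_+(n)$, so that the entire proposition becomes a matter of assembling results already in hand rather than of any fresh computation. Concretely, I would show that each of (a)--(d) is equivalent to the equality $r(n)=r_+(n)$ and then chain these through that common pivot.

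The equivalence of (a) and (b) is immediate: it is exactly the content of Remark~\ref{pincer}, which records that $r_+(n)=r(n)$ holds if and only if $-1\in\langle 2\rangle\subseteq(\Z/d\Z)^{\times}$. For (a) $\iff$ (c) I would invoke Proposition~\ref{proj}(a), giving $\PUT(R_n)/\PSUT(R_n)\isisom(\Z/2\Z)^{1+r(n)-r_+(n)}$; since $(\Z/2\Z)^{k}\isisom\Z/2\Z$ precisely when $k=1$, the quotient is cyclic of order $2$ exactly when $1+r(n)-r_+(n)=1$, that is, when $r(n)=r_+(n)$. For (a) $\iff$ (d) I would invoke Proposition~\ref{proj}(b), giving $\PUT(R_n)/\PUTz(R_n)\isisom(\Z/2\Z)^{r(n)-r_+(n)}$; this quotient is trivial---equivalently $\PUT(R_n)=\PUTz(R_n)$---if and only if the exponent $r(n)-r_+(n)$ vanishes, again $r(n)=r_+(n)$.

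I expect no serious obstacle, since the structural work has already been encapsulated in Remark~\ref{pincer} and Proposition~\ref{proj}. The one point deserving care is the dichotomy $r(n)\in\{r_+(n),2r_+(n)\}$ recorded above, reflecting that each prime of $F_n$ above $2$ either splits or does not split in $K_n$. This guarantees that the exponents appearing in Proposition~\ref{proj} are nonnegative integers, and in particular that reading off ``cyclic of order $2$'' in (c) genuinely forces the exponent in part (a) of that proposition to equal $1$; without knowing $r(n)\ge r_+(n)$ one could not draw that conclusion.
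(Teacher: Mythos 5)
Your proof is correct and follows essentially the same route as the paper: the paper also disposes of (a)$\iff$(b) as elementary, and derives the remaining equivalences from diagram \eqref{reds} together with \eqref{ohosq} and Proposition \ref{proj} (it happens to chain (c)$\iff$(d) via Proposition \ref{proj}\eqref{cc} rather than using part \eqref{bb} for (a)$\iff$(d), but since all parts of Proposition \ref{proj} come from the same diagram, this is only a difference in bookkeeping).
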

\begin{proof}
The equivalence of \eqref{cow} and \eqref{calf} is elementary.
The equivalence of \eqref{cow} and \eqref{sheep}
follows from diagram \eqref{reds}
and \eqref{ohosq}.  The equivalence of \eqref{sheep} and \eqref{duck} follows from
Proposition \ref{proj}(c).

\end{proof}

\subsection[The Clifford-cyclotomic groups
  $\Gg_n$ and $\SGg_n$]
    {\texorpdfstring{The Clifford-cyclotomic groups
  \protect{\boldmath{$\Gg_n \text{ and }\SGg_n$}}}
{The Clifford-cyclotomic groups}}
\label{report}

The {\em Clifford group} $\mathcal{C}$ can be defined as
$\mathcal{C}=\UT(R_4)$ \cite[Section 2.1]{FGKM}.  With
$T_n$ as in \eqref{guestroom}, 
define the {\em Clifford-cyclotomic group} \cite[Section 2.2]{FGKM}(resp.,
{\em special} Clifford-cyclotomic group) for $4|n$  by
\begin{equation}
  \label{denied2}
  \Gg_n=\langle\mathcal{C}, T_n\rangle \quad\quad
  \mbox{(resp., $\SGg_n=\Gg_n\cap \SUT(R_n)$);}
\end{equation}
we have $\Gg_n\subseteq \UTz(R_n)$. This definition agrees with
\eqref{tooth} by \cite[Prop. 2.1]{IJKLZ}.
For additional results on $\Gg_n$ and $\SGg_n$ see
\cite{IJKLZ}.

\begin{prop}\label{cor:other-inf-index}
  Suppose that $\PGg_n \ll [\PUT(R_n)]_f$.
  Then \mbox{$\Gg_n \ll \UT(R_n)_f$},
  $\SGg_n\ll\SUT(R_n)_f$, $\Pro\!\SGg_n\ll[\Pro\!\SUT(R_n)]_{f}$.
\end{prop}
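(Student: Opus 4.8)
The plan is to reduce all three assertions to two statements about projective groups by peeling off finite central kernels, and then to prove those by exhibiting the Clifford groups as fundamental groups of a subtree of groups and invoking Proposition \ref{prop:inf-index}. Throughout I use the index identity $[G:H]=[G/N:HN/N]\cdot[N:N\cap H]$, valid for $N\unlhd G$ and $H\le G$. First I would observe that $\UT(R_n)_f=\UTz(R_n)_f$: every finite-order element of $\UT(R_n)$ has determinant a root of unity of $R_{n,1}^\times$, and by \eqref{onehalfone} these are exactly $\mu_n=\langle\zeta_n\rangle$, so all torsion already lies in $\UTz(R_n)$. The scalar kernel of $\UTz(R_n)\twoheadrightarrow\PUTz(R_n)$ is $\{\lambda I:\lambda\in R_{n,1}^\times,\ \lambda^2\in\langle\zeta_n\rangle\}=\mu_nI$, which is finite; hence $\Pro$ sends $\UTz(R_n)_f$ onto $[\PUTz(R_n)]_f$ and $\Gg_n$ onto $\PGg_n$, and the identity with $N=\mu_nI$ gives $\Gg_n\ll\UT(R_n)_f\iff\PGg_n\ll[\PUTz(R_n)]_f$. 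The identical argument with the finite kernel $\{\pm I\}$ of $\SUT(R_n)\twoheadrightarrow\PSUT(R_n)$ gives $\SGg_n\ll\SUT(R_n)_f\iff\PSGg_n\ll[\PSUT(R_n)]_f$, so the second and third assertions coincide and it remains to prove $\PGg_n\ll[\PUTz(R_n)]_f$ and $\PSGg_n\ll[\PSUT(R_n)]_f$.

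For these two I would use the action on $\Delta$. By Theorem \ref{bird}, $\PGg_n\cong S_4\ast_{D_4}D_n$ is the fundamental group of a single segment of groups $S$, and it is infinite; under the identifications of \ref{assume2}--\ref{assume3} this $S$ embeds as a subtree of groups of the quotient graphs of groups attached to $\PUTz(R_n)$ and to $\PUT(R_n)$, while the index-two subgroup $\PSGg_n$ is likewise $\pi_1$ of the corresponding subtree of $\grn$. For the ambient group in each case the torsion subgroup $[\,\cdot\,]_f$ equals $\pi_1(\Gamma,\gr)_f$ in the sense of Section \ref{groupgraph}. Choosing a spanning tree $T\supseteq S$ (so that $\pi_1(\Gamma,T)$ is infinite, since it contains $\PGg_n$) and applying Proposition \ref{prop:inf-index} reduces each statement to the assertion that $S$ is a proper subtree, i.e. $\pi_1(\Gamma,S)\ne\pi_1(\Gamma,T)$. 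The hypothesis $\PGg_n\ll[\PUT(R_n)]_f$ is exactly this properness in $\oGrn$, again by Proposition \ref{prop:inf-index}; since all the quotient graphs come from the common tree $\Delta$ and are linked by the finite-index inclusions $\PSUT(R_n)\le\PUTz(R_n)\le\PUT(R_n)$, one then checks that $S$ remains proper in the graphs for $\PUTz(R_n)$ and for $\PSUT(R_n)$, completing the argument.

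The hard part is precisely this last transfer of properness, and I expect it to require the tree rather than formal group theory. When $R_{n,1}^\times$ is finite---equivalently $r(n)=r_+(n)$, i.e. $-1\in\langle2\rangle\subseteq(\Z/d\Z)^\times$---the groups $\PSUT(R_n)$, $\PUTz(R_n)$, $\PUT(R_n)$ differ only by finite kernels, their quotient graphs essentially coincide, and the three conclusions fall out of the finite-kernel reductions alone. But when $R_{n,1}^\times$ has positive rank, the case $-1\notin\langle2\rangle$ illustrated by $n=28$, the torsion subgroups $[\PUTz(R_n)]_f$ and $[\PSUT(R_n)]_f$ have infinite index in $[\PUT(R_n)]_f$, so $\PGg_n\ll[\PUT(R_n)]_f$ does \emph{not} formally imply $\PGg_n\ll[\PUTz(R_n)]_f$; here one genuinely needs the realization of the Clifford groups as a common proper segment of groups together with Proposition \ref{prop:inf-index} applied in each quotient graph separately, exactly as in the computation behind Example \ref{apple}.
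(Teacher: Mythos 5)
Your first paragraph is correct, and it is in substance the paper's entire proof: the published argument consists precisely of the observation that the scalar subgroups and the determinant images of $\Gg_n$ and of $\UT(R_n)_f$ are finite, which is exactly what powers your equivalences
$\Gg_n \ll \UT(R_n)_f \iff \PGg_n \ll [\PUTz(R_n)]_f$ and
$\SGg_n \ll \SUT(R_n)_f \iff \PSGg_n \ll [\PSUT(R_n)]_f$
(your lifting step---torsion in $\PUTz(R_n)$ and $\PSUT(R_n)$ lifts to torsion because the scalar subgroups of $\UTz(R_n)$ and $\SUT(R_n)$ are finite---is in fact carried out more carefully than in the paper).

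What you submit is nevertheless not a complete proof, and the gap sits exactly where you flag it: your reduction shows the proposition amounts to deriving $\PGg_n \ll [\PUTz(R_n)]_f$ and $\PSGg_n \ll [\PSUT(R_n)]_f$ from the hypothesis $\PGg_n \ll [\PUT(R_n)]_f$, and neither derivation is ever carried out. Two concrete steps fail. First, the hypothesis is \emph{not} ``exactly the properness of $S$ in $\oGrn$'': Proposition \ref{prop:inf-index} gives properness $\Rightarrow$ infinite index in $\pi_1(\Gr)_f$, and the converse is false. If $\Gr$ is a segment $S$ with $G=\pi_1(\Gamma,S)$ infinite together with a single loop with trivial edge group, then $T=S$ is a spanning tree (no properness), yet $\pi_1(\Gr)\cong G\ast\Z$ and $\pi_1(\Gr)_f$ is the normal closure of $G$, a free product of the conjugates $x^kGx^{-k}$, $k\in\Z$, in which $G$ has infinite index. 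So properness cannot be extracted from the hypothesis in the first place. Second, the ``transfer of properness'' to the quotient graphs of $\Gamma_+$ and $\Gamma_1$ is pure assertion (``one then checks''), and it is not routine: stabilizers in $\Gamma_+$ or $\Gamma_1$ are index-at-most-two subgroups of stabilizers in $\Gamma_0$, so a strict containment $\Gamma_{0,\bv}\supsetneq\Gamma_{0,\be}$ can collapse to an equality after intersecting with $\Gamma_+$.

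Your closing dichotomy is also too optimistic. When $r(n)=r_+(n)$ the first conclusion does coincide with the hypothesis (then $\PUTz(R_n)=\PUT(R_n)$), but the $\SUT$-conclusions still do not ``fall out of the finite-kernel reductions alone,'' because $[\PSUT(R_n)]_f$ can have infinite index in $[\PUT(R_n)]_f$ even in the nonsplit case. For $n=32$ one has $\PUT(R_{32})/[\PUT(R_{32})]_f\cong\Z^{\ast 16}$ and $\PSUT(R_{32})/[\PSUT(R_{32})]_f\cong\Z^{\ast 40}$; the image of $\PSUT(R_{32})$ in $\Z^{\ast 16}$ is free of rank $16$ or $31$, so the induced surjection from $\Z^{\ast 40}$ has nontrivial, hence infinite, kernel, and that kernel is $\bigl(\PSUT(R_{32})\cap[\PUT(R_{32})]_f\bigr)/[\PSUT(R_{32})]_f$. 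Thus the non-formal difficulty you correctly isolate is not confined to $n=28,60$: it affects the second and third conclusions whenever the quotient graph has positive genus. To be fair, that difficulty is equally present in the paper's two-line proof; in the paper's applications the reduced statements $\PGg_n\ll[\PUTz(R_n)]_f$ are in practice obtained directly from the computed quotient graphs via Proposition \ref{prop:inf-index} (see Section \ref{until2}), not deduced from the $\PUT$-hypothesis.
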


\begin{proof}
  The subgroup of scalar matrices of $\Gg_n$ and the image of the determinant
  homomorphism $\Gg_n \to \BC$ are always finite; likewise for $\UT(R_n)_f$.
\end{proof}

\section{The Hamilton quaternions and unitary groups}
 \label{glass8}
\subsection{The Hamilton quaternions}
 \label{sec:Hamilton quaternions}
   
Let $\H$ be the Hamilton quaternions over $\Q$ with a fixed 
$\Q$-basis $1$, $i$, $j$, $k$
satisfying $i^2=j^2=k^2=-1$, $ij=-ji$, $ik=-ki$, $jk=-kj$.
Put $\H_n=\H\otimes_{\Q} F_n$. 

  \begin{prop}
    \label{unramified}
    Let $n=2^s d$ with $d$ odd, $s\geq 2$, and $n\geq 8$.
    Then the quaternion algebra $\H_n$ is
    unramified at the primes above $2$ in
    $F_n$.  Equivalently, $\H_n$ is
    unramified at all finite primes of $F_n$.
  \end{prop}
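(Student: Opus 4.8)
The plan is to compute the local invariants of $\H_n$ one prime at a time. Recall that $\H$ is the rational quaternion algebra ramified exactly at $2$ and $\infty$, so among the finite primes of $\Q$ it ramifies only at $2$. For a central simple $\Q$-algebra $A$ and a prime $\fp$ of $F_n$ over a rational prime $p$, base change multiplies local invariants by the local degree: $\mathrm{inv}_\fp(A\otimes_\Q F_n)=[F_{n,\fp}:\Q_p]\cdot\mathrm{inv}_p(A)$ in $\Q/\Z$. Since $\mathrm{inv}_p(\H)=0$ for every odd $p$, the algebra $\H_n$ is automatically split at every prime of $F_n$ over an odd prime. This already yields the asserted equivalence and reduces everything to the primes above $2$.

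So consider a prime $\fp\mid 2$ of $F_n$. Here $\mathrm{inv}_2(\H)=1/2$, hence $\mathrm{inv}_\fp(\H_n)=\tfrac{1}{2}[F_{n,\fp}:\Q_2]\bmod\Z$, which vanishes exactly when the local degree $[F_{n,\fp}:\Q_2]$ is even. (Equivalently, $\H\otimes_\Q\Q_2$ is the quaternion division algebra over $\Q_2$, and a finite extension of $\Q_2$ splits it iff its degree is even.) Because the ramification index $e(\fp/2)$ divides $[F_{n,\fp}:\Q_2]$, it suffices to prove that $e(\fp/2)$ is always even.

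To compute $e(\fp/2)$ I would pass to $L=\Q(\zeta_n)$, whose Galois group is $(\Z/n\Z)^\times\cong(\Z/2^s\Z)^\times\times(\Z/d\Z)^\times$. The prime $2$ is unramified in $\Q(\zeta_d)$ and totally ramified in $\Q(\zeta_{2^s})$, so the inertia subgroup $I$ at $2$ is the factor $(\Z/2^s\Z)^\times$, of order $2^{s-1}$. Writing $c\in\Gal(L/\Q)$ for complex conjugation, the class of $-1$, so that $F_n=L^{\langle c\rangle}$, the ramification index in $F_n$ is $e(\fp/2)=|I|/|I\cap\langle c\rangle|$, since the inertia group in the abelian extension $F_n/\Q$ is the image of $I$. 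Now $c$ corresponds to $(-1,-1)$, so $c\in I$ iff $-1\equiv 1\pmod d$, i.e.\ iff $d=1$. When $d>1$ this gives $e(\fp/2)=2^{s-1}\ge 2$, and when $d=1$ (so $n=2^s$, $s\ge 3$) it gives $e(\fp/2)=2^{s-1}/2=2^{s-2}\ge 2$; in both cases it is even, completing the argument.

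I expect the only genuine subtlety to be the boundary case $s=2$ together with its companion $d=1$: one cannot simply invoke ramification of $2$ in $\Q(\zeta_{2^s})^+\subseteq F_n$, since $\Q(\zeta_4)^+=\Q$ is unramified at $2$. This is exactly where the inertia-versus-complex-conjugation bookkeeping does the work. The point is that for $d>1$ complex conjugation lies outside inertia, so descending to $F_n$ does not lower the even ramification index $2^{s-1}$ coming from the $2$-part, while for $d=1$ it halves $2^{s-1}$ but still leaves an even number because $s\ge 3$.
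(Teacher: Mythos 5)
Your proof is correct, and it follows the same basic strategy as the paper's: reduce splitting at a prime $\fp\mid 2$ to the evenness of the local degree $[F_{n,\fp}:\Q_2]$, and then verify evenness of the ramification index using the structure of $\Gal(\Q(\zeta_n)/\Q)$. The differences are in organization and in what is made explicit. The paper asserts the parity criterion in terms of the order of the decomposition group $D(\fp_i)\subseteq\Gal(F_n/\Q)$, whereas you justify it from the Brauer-group formula $\mathrm{inv}_\fp(\H\otimes_\Q F_n)=[F_{n,\fp}:\Q_2]\cdot\mathrm{inv}_2(\H)$, which also gives you the ``equivalently'' clause (splitting at all odd finite primes) for free rather than leaving it implicit. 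The paper then splits into the cases $s>2$ and $s=2$: for $s>2$ it invokes the subfield $\Q(\zeta_{2^s})^+\subseteq F_n$, in which $2$ already has even ramification index, and for $s=2$ it argues that the inertia field $\Q(\zeta_d)$ of $2$ in $\Q(\zeta_n)$ cannot contain the totally real field $F_n$, so $2$ ramifies in $F_n$. You instead give a single uniform computation, $e(\fp/2)=\lvert I\rvert/\lvert I\cap\langle c\rangle\rvert$ with $I=(\Z/2^s\Z)^\times\times\{1\}$ and $c=(-1,-1)$, and split by whether $d=1$ or $d>1$; this packages both of the paper's cases into one inertia-versus-complex-conjugation calculation and correctly isolates the genuine boundary case $s=2$, $d>1$ (where $\Q(\zeta_4)^+=\Q$ makes the paper's $s>2$ trick unavailable). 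Your version is slightly longer but more self-contained; the paper's is terser at the cost of quoting the parity criterion and the subfield ramification facts without proof.
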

\begin{proof} 
  The quaternion algebra $\H_n$ is unramified at $\fp_i$ for
   $1\leq i\leq r_+$ if and only if the order of the decomposition group
   $$D(\fp_i)
   \subseteq \Gal(F_n/\Q)\simeq (\Z/n\Z)^{\times}/\langle \pm 1\rangle$$
   is even.  If $s > 2$, then $F_n$ contains $\Q(\zeta_{2^s})^+$, in which
   $e(\fp)$ is even.  For $s = 2$, the extension $\Q(\zeta_n)$ has
   ramification index $2$ above $2$, with the inertia field being
   $\Q(\zeta_d) \not \supseteq F_n$ since
   $n\geq 8$. Thus $F_n$ likewise
   has ramification index $2$ above $2$, so the decomposition
   group has even order as well.
\end{proof}
The following assertion is elementary.
\begin{prop}
  \label{one/unramified}
  Let $n=2^s d$ with $d$ odd, $s\geq 2$, and $n\geq 8$.
 The following are equivalent:\begin{enumerate}[\upshape (a)]
  \item
   There is a unique prime
$\fp$ of $F=F_n$ above $2$ and
  the quaternion algebra $\H_n$
  is unramified at that prime $\fp$.
  \item
  Hypothesis \textup{\ref{assume}:}
  $\langle 2, -1\rangle = (\Z/d\Z)^{\times}$.
  \end{enumerate}
\end{prop}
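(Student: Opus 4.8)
The plan is to derive this equivalence directly from the two propositions established immediately before it, since the hypotheses of Proposition~\ref{unramified} and of Proposition~\ref{punt} coincide with the standing hypotheses $n=2^sd$, $d$ odd, $s\ge 2$, $n\ge 8$ assumed here; no new computation should be needed.

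First I would use Proposition~\ref{unramified}, which asserts---\emph{unconditionally}, with no assumption on how many primes of $F_n$ lie above $2$---that $\H_n$ is unramified at every prime above $2$ in $F_n$. The point to extract is that the unramifiedness clause in condition~(a) carries no information: whenever there is a unique prime $\fp$ of $F_n$ above $2$, the algebra $\H_n$ is automatically unramified at $\fp$. Hence (a) is equivalent to the single condition that there be a unique prime of $F_n$ above $2$, i.e., that $r_+(n)=1$.

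Next I would invoke Proposition~\ref{punt}, whose stated equivalence identifies $r_+(n)=1$ with $\langle 2,-1\rangle=(\Z/d\Z)^\times$---precisely condition~(b), namely Hypothesis~\ref{assume}. Composing the two equivalences then yields (a)$\Leftrightarrow$(b).

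I do not expect a genuine obstacle here: all of the substance sits in the two preceding propositions, and the only thing to verify is that their hypotheses match the present ones verbatim, which they do. This is exactly why the statement is flagged beforehand as elementary; the single subtlety worth making explicit is that Proposition~\ref{unramified} renders the ramification condition automatic, so that (a) collapses to a statement purely about the splitting behaviour of $2$ in $F_n$.
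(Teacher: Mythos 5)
Your proposal is correct and matches the paper's intended reasoning: the paper gives no written proof at all (the proposition is explicitly flagged as elementary), precisely because Proposition~\ref{unramified} makes the ramification clause in (a) automatic and Proposition~\ref{punt} identifies uniqueness of the prime $\fp$ above $2$ with the condition $\langle 2,-1\rangle=(\Z/d\Z)^{\times}$. The two-step reduction you describe, together with the check that the standing hypotheses of those propositions coincide with the present ones, is exactly the argument the paper leaves implicit.
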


 \begin{prop}
  \label{punt1}
Let $n=2^sd$ with $d$ odd, $s\geq 2$, and $n\geq 8$.
The following are equivalent:
\begin{enumerate}[\upshape (a)]
  \item
  There is a unique prime $\fp$ of $F_n$ above $2$,
  the quaternion algebra $\H_n$ is unramified at $\fp$, and $\fp$
  does not split in $K_n$.
  \item
 $\langle 2\rangle = (\Z/d\Z)^{\times}$.
\end{enumerate}
\end{prop}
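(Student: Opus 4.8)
The plan is to reduce condition (a) to the purely group-theoretic statement (b) about $\langle 2\rangle$ and $-1$ inside $(\Z/d\Z)^\times$, assembling three facts already in hand: Proposition \ref{punt}, Remark \ref{pincer}, and Proposition \ref{unramified}.

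First I would dispose of the unramifiedness clause. By Proposition \ref{unramified}, under the standing hypotheses ($n=2^sd$, $d$ odd, $s\geq 2$, $n\geq 8$) the algebra $\H_n$ is unramified at \emph{every} prime of $F_n$ above $2$. Hence the condition ``$\H_n$ is unramified at $\fp$'' is automatic and carries no information. Thus (a) is equivalent to the conjunction of the two remaining clauses: there is a unique prime $\fp$ of $F_n$ above $2$, and $\fp$ does not split in $K_n$.

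Next I would translate each surviving clause into the language of $r_+(n)$ and $r(n)$. Uniqueness of $\fp$ is exactly $r_+(n)=1$, which by Proposition \ref{punt} is equivalent to $\langle 2,-1\rangle=(\Z/d\Z)^\times$. For the splitting clause, observe that when $r_+(n)=1$ the single prime $\fp$ either splits in the quadratic extension $K_n/F_n$, giving $r(n)=2=2r_+(n)$, or else is inert or ramified, giving $r(n)=1=r_+(n)$; so ``$\fp$ does not split'' is precisely ``$r(n)=r_+(n)$''. By Remark \ref{pincer} this last identity holds if and only if $-1\in\langle 2\rangle\subseteq(\Z/d\Z)^\times$.

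Finally I would combine the two translations: (a) holds if and only if $\langle 2,-1\rangle=(\Z/d\Z)^\times$ and $-1\in\langle 2\rangle$. Since $-1\in\langle 2\rangle$ forces $\langle 2,-1\rangle=\langle 2\rangle$, the conjunction collapses to $\langle 2\rangle=(\Z/d\Z)^\times$, which is (b); the reverse implication is immediate, because (b) yields $-1\in\langle 2\rangle=(\Z/d\Z)^\times$ and hence both $\langle 2,-1\rangle=(\Z/d\Z)^\times$ and $r(n)=r_+(n)$, so every clause of (a) holds. I do not anticipate a real obstacle; the only step deserving care is verifying that, once $r_+(n)=1$, the global identity $r(n)=r_+(n)$ genuinely coincides with the local splitting behavior of the unique prime $\fp$ in $K_n/F_n$. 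This is clear since a quadratic extension permits only the trichotomy split/inert/ramified and both non-split cases contribute exactly one prime above $\fp$, so they both fall under $r(n)=r_+(n)$.
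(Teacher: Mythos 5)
Your proof is correct and is essentially the paper's own argument: the paper proves Proposition \ref{punt1} by combining Proposition \ref{one/unramified} (uniqueness of $\fp$ together with unramifiedness of $\H_n$ at $\fp$ is equivalent to $\langle 2,-1\rangle=(\Z/d\Z)^\times$) with Proposition \ref{hearts} (non-splitting, i.e.\ $r(n)=r_+(n)$, is equivalent to $-1\in\langle 2\rangle$). You reach the same reduction by citing the finer-grained ingredients Proposition \ref{unramified}, Proposition \ref{punt}, and Remark \ref{pincer}, which are exactly what those two propositions package together, and your final collapse of the conjunction to $\langle 2\rangle=(\Z/d\Z)^\times$ is the same observation the paper leaves implicit.
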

 \begin{proof}
   Combine Proposition \ref{one/unramified} with Proposition \ref{hearts}.
   \end{proof}
   
 \noindent  Obviously $n = 2^s$ satisfies the conditions in
 Proposition \ref{punt1} 
for $s\geq 3$.  However, $n = 8m$ also satisfies these conditions
for $m \in \{3,5,6\}$, although not for $m = 7$.
We will examine many of these
graphs for $n$ a small multiple of $4$ explicitly in this paper. 

\begin{prop} 
   \label{spades}
   Assume $n=2^s d$ with $d$ odd, $s\geq 2$,
   and $n\geq 8$ with $\langle 2,-1\rangle =(\Z/d\Z)^\times$.
   The following are equivalent:
   \begin{enumerate}[\upshape (a)]
   \item
     \label{rose}
       $H^{1}(\Gal(K_n/F_n), R_n^{\times})=0$.
   \item
     \label{rose2}
  If $\fp$ splits as $\fp=\wp\overline{\wp}$ in $K_n$ and $r$
     is the least positive integer such that $(\wp/\overline{\wp})^r =
     (\beta )$ is principal with $\Nm_{K_n/F_n}(\beta)=1$, then
     \mbox{$(1+\beta)\delta\in R_n^\times$} for some $\delta\in F_n$.
   \end{enumerate}
 \end{prop}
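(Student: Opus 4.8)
The plan is to compute $H^1(\Gal(K_n/F_n),R_n^\times)$ directly, using that $G:=\Gal(K_n/F_n)$ is cyclic of order $2$ generated by complex conjugation. For a cyclic group of order $2$ acting on an abelian group $M$ one has $H^1(G,M)\cong\ker(\Nm)/\operatorname{im}(\sigma-1)$, where $\sigma$ is the generator. Applying this with $M=R_n^\times$ and $\Nm=\Nm_{K_n/F_n}$, and noting $\ker(\Nm)=R_{n,1}^\times$ while $(\sigma-1)x=\overline{x}/x$, I get
\[
H^1(G,R_n^\times)\cong R_{n,1}^\times/\{\overline{x}/x:x\in R_n^\times\}.
\]
Writing $\phi(x)=\overline{x}/x$, a homomorphism $R_n^\times\to R_{n,1}^\times$ with kernel $\uRn^\times$, condition (a) says exactly that every norm-one unit is a coboundary $\overline{x}/x$.

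First I would dispose of the torsion by showing $\mu_n\subseteq\phi(R_n^\times)$ always. The essential point is that $1+\zeta_n\in R_n^\times$: its norm $\Nm_{K_n/\Q}(1+\zeta_n)=\Phi_n(1)$ equals $1$ if $d>1$ and $2$ if $d=1$, so the ideal $(1+\zeta_n)$ is supported only above $2$ and hence is trivial in $R_n=\OO_n[1/2]$. A one-line computation then gives $\phi(1+\zeta_n)=(1+\zeta_n^{-1})/(1+\zeta_n)=\zeta_n^{-1}$, so $\zeta_n\in\phi(R_n^\times)$ and therefore $\mu_n\subseteq\phi(R_n^\times)$. In the case that $\fp$ does not split, \eqref{onehalfone} gives $R_{n,1}^\times=\mu_n$, so $H^1=0$ automatically; condition (b) is then vacuous, and the equivalence holds trivially.

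In the split case, the standing hypothesis forces $r_+(n)=1$, so $R_{n,1}^\times\cong\mu_n\times\Z$ by \eqref{onehalfone}, and I would identify the free generator with the element $\beta$ of (b). Consider $\val_\wp\colon R_{n,1}^\times\to\Z$: its kernel is $R_{n,1}^\times\cap\OO_n^\times=\mu_n$, and its image is $r_0\Z$, where $r_0$ is the least positive $\wp$-valuation of a norm-one unit. For any $\gamma\in R_{n,1}^\times$ one has $(\gamma)=(\wp/\overline{\wp})^{\val_\wp(\gamma)}$ as fractional $\OO_n$-ideals, since $\gamma$ is an $R_n$-unit of norm one; minimality then shows that the integer $r$ of (b) equals $r_0$ and that $\beta$ maps to a generator of $r_0\Z$, so $R_{n,1}^\times=\langle\mu_n,\beta\rangle$. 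Combined with $\mu_n\subseteq\phi(R_n^\times)$, this reduces condition (a) to the single assertion $\beta\in\phi(R_n^\times)$.

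Finally I would prove $\beta\in\phi(R_n^\times)\iff$ (b) by a formal manipulation exploiting $\overline{\beta}=\beta^{-1}$. If $\beta=\overline{y}/y$ with $y\in R_n^\times$, then $y(1+\beta)=y+\overline{y}=\Trace_{K_n/F_n}(y)\in F_n$, which is nonzero because $\beta\neq-1$; hence $(1+\beta)\,\Trace_{K_n/F_n}(y)^{-1}=y^{-1}\in R_n^\times$, giving (b) with $\delta=\Trace_{K_n/F_n}(y)^{-1}$. Conversely, if $u:=(1+\beta)\delta\in R_n^\times$ with $\delta\in F_n$, then $\overline{u}=(1+\beta^{-1})\delta=u/\beta$, whence $\beta=\phi(\overline{u})\in\phi(R_n^\times)$. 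I expect the main obstacle to be the bookkeeping of the previous paragraph: confirming that $\beta$ is a genuine generator of the free part of $R_{n,1}^\times$ (not a proper power), which rests on the image of $\val_\wp$ and the minimality of $r$. The subsidiary fact that $1+\zeta_n$ is an $R_n$-unit is also delicate, since the naive generators (roots of unity, real units, and a uniformizer at $\wp$) span only a finite-index subgroup of $R_n^\times$, so one cannot read off $\phi(R_n^\times)$ from them directly.
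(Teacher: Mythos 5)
Your proof is correct and takes essentially the same route as the paper's: both identify $H^{1}(\Gal(K_n/F_n),R_n^\times)$ with $R_{n,1}^\times$ modulo coboundaries $\overline{x}/x$, reduce to the two generators of $R_{n,1}^\times$ (the torsion $\mu_n$, handled by $1+\zeta_n\in R_n^\times$, and the element $\beta$ generating the free factor in the split case), and pass between $\beta\in\operatorname{im}\phi$ and condition (b) by the identical manipulation of $1+\beta$ using $\overline{\beta}=\beta^{-1}$. The only difference is that you spell out what the paper asserts or cites---that $1+\zeta_n$ is an $R_n$-unit and that $\beta$ really generates the $\Z$-factor of $R_{n,1}^\times$ (via $\val_\wp$ and minimality of $r$)---which adds detail but not a new method.
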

\begin{proof}
  Note that \eqref{rose} is equivalent to the statement that every
  $\alpha\in R_{n,1}^\times$ is given by $\gamma/\overline{\gamma}$
  for some $\gamma\in R_n^\times$.  By
  \eqref{onehalfone}, $R_{n,1}^\times\cong\mu_{n}$ if $\fp$ does not
  split in $K_n$ and $R_{n,1}^\times\cong\mu_{n} \times \Z$ if it
  does, where $\mu_{n}$ is generated by $\zeta_n$ and $\beta$ is a
  generator of the $\Z$ since it has norm $1$ and is the ``smallest''
  generator that does so.

  Assume \eqref{rose2}. By \cite[Lemma 3.9]{IJKLZ2}
  $1+\zeta_n\in R_n^\times$.  Thus
  $\zeta_n=\gamma/\overline{\gamma}$ for $\gamma=1+\zeta_n\in
  R_n^\times$ and $\beta=\gamma/\overline{\gamma}$ for
  $\gamma=(1+\beta)\delta\in R_n^\times$.  Hence, since $R_n^{\times}$
  is generated by $\zeta_n$ and $\gamma$, we have
  $H^{1}(\Gal(K_n/F_n), R_n^{\times})=0$ and \eqref{rose} is true.

  Conversely, assume \eqref{rose}.  Then we have
  $\beta=\gamma/\overline{\gamma}$ for some $\gamma\in R_n^\times$.
  Let $\delta=\gamma/(1+\beta)$.  Then
  $$\overline{\delta}=\frac{\overline{\gamma}}{1+\overline{\beta}}
  =\frac{\gamma\beta}{1+\overline{\beta}}=\frac{\gamma}{1+\beta}=\delta .$$
  Therefore, $\delta\in F_n$ and \eqref{rose2} follows.
\end{proof}

 The {\it standard} maximal $\uRn$-order of $\H_n$ is 
\begin{equation*}
  \label{standard}
  \widetilde{\M}_n := \uRn\langle 1,\,i,\,j,\, (1+i+j+k)/2 \rangle\  .
\end{equation*}
Now for each $n$ we choose an $\uOO_n$-maximal order
$$\M_n\supseteq \{1,\,i,\,j,\, (1+i+j+k)/2 \}.$$
 The ideal $(2)$ in $\uOO_n$ is the square of an ideal 
  $\fq=\fq_n$.
Fix a set of generators $A=A(n)$ for $\fq$. For example, if $8|n$ we take
$A = \{\sqrt{2}\}$; if not but $12|n$, then take
$A = \{1+\sqrt{3}\}$.
Define the maximal $\uOO_n$-order $\M_n\subseteq\widetilde{\M}_n$ by
\begin{equation}
  \label{eq: Mn}
\M_n=\uOO_n\langle 1, (1+i)\alpha/2, (1+j)\alpha/2,(1+i+j+k)/2\rangle\, ,
\end{equation}
where $\alpha$ runs over $A$.  Observe that $\M_n$ does not depend
on the choice of generators $A=A(n)$ of $\fq=\fq_n$.

 \begin{remark1}
  In general, $\uOO_n\langle 1,\,i,\,j,\,(1+i+j+k)/2 \rangle$ is not a maximal
  order of $\H_n$.  Indeed, this order has discriminant $(2)$; if $\H_n$
  is unramified at the primes above $2$ (for example, if $n=2^s$), then
 the discriminant of a maximal order
  of $\H_n$ is the unit ideal.  On the other hand, the order $\widetilde{\M}_n$
  is a maximal $\uRn$-order, because $2$ is a unit in $\uRn$.
\end{remark1}

  \begin{remark1}  
    {\rm In general the $\uRn$-type number of $\H_n$
      is not $1$---there can be nonisomorphic $\uRn$-maximal orders
      of $\H_n$.}
  \end{remark1}

  We now make definitions as in Kurihara \cite{K}(who in turn follows
  Ihara \cite{I}):
  \begin{definition1}
    \label{carrot}
    Assume $n$ satisfies Hypothesis \ref{assume} with $\fp$ the unique
    prime of $F:=F_n$ above $2$. Note that $\H_n\otimes_{F}F_{\fp}=
    \Mat_{2\times 2}(F_\fp)$.
    Set
    \begin{align*}
      \widetilde{\M}_{n,1}^\times &=\{m\in\widetilde{\M}_n^\times\mid
      \Norm_{\H_n/F_n}(m)=1\}\\
      \widetilde{\M}_{n,+}^\times &=\{m\in\widetilde{\M}_n^\times\mid
    \val_{\fp} (\Norm_{\H_n/F_n}(m))\text{ is even}\}.
\end{align*}
Define:
    \begin{align*}
      \Gamma_0 =\Gamma_{0,n}& =\Gamma_{0,n}(\widetilde{\M}_n)
      =\Pp\!\widetilde{\M}_n^\times =\widetilde{\M}_n/\uRn^\times\\
      \Gamma_+=\Gamma_{+,n}&=\Gamma_{+,n}(\widetilde{\M}_n)=
      \Pp \!\widetilde{\M}_{n,+}^\times =\widetilde{\M}_{n,+}^\times/\uRn^\times\\
      \Gamma_1=\Gamma_{1,n}&=\Gamma_{1,n}(\widetilde{\M}_n)=
      \Pp \!\widetilde{\M}_{n,1}^\times =\widetilde{\M}_{n,1}^\times/\pm 1.
    \end{align*}
    Then $\Gamma_1\subseteq\Gamma_{+}\subseteq\Gamma_{0}$ are discrete,
    cocompact subgroups of $\PGLT(F_\fp)$.
    \end{definition1}

Recall that if $H \leq F_n^\times$, then $H_+$ is the subgroup of totally
positive elements of $H$. 
Assume that $n$ satisfies Hypothesis \ref{assume} with $\fp$ the unique
prime of $F:=F_n$ above $2$.
Put
\[
\uR^\times_{n,+,\,\fp\text{-ev}}=\{x\in\uR_{n,+}^\times\mid
\val_{\fp}(x)\text{ is even}\}.
\]
 The reduced norm map 
  $\Nm=\Norm_{\H_n/F_n}\colon \H_n^{\times}\rightarrow F_n^{\times}$ induces maps
 \begin{equation}
   \label{rooster}
     \Nm_0:\Gamma_0\rightarrow\frac{\uR_{n,+}^\times}
     {(\uRn^{\times})^{2}}\,\,,\quad
     \Nm_{+}\colon \Gamma_{+}\rightarrow\frac{\uR^\times_{n, +,\,\fp\text{-ev}}            }
     {(\uRn^\times)^2}\,\, ,\quad
     \Nm_1\colon \Gamma_1\rightarrow 1.
 \end{equation}

  Let $C_2$ be the cyclic group of order $2$, which we identify both with
      $\pm 1$ and with $\F_2$.  For $1 \le i \le d=[F:\Q]$,
      let $s_i$ be the map $F^\times \to C_2$
      taking $x$ to the sign of its image in the $i$-th real embedding of $F$.
      We then define the \mbox{$\fp$-signature} map
      $\sig_\fp \colon  \uRn^\times\rightarrow {C_2}^{d+1}$ 
      by
      \begin{equation}
        \label{sig}
        \sig_\fp(x) = (s_1(x),\dots,s_{d}(x),\val_\fp(x) \bmod 2) .
        \end{equation}
      
 \begin{prop}
   \label{house}
   \begin{enumerate}[\upshape (a)]
   \item
     \label{house1}
  The maps $\Nm_0, \Nm_{+}, \Nm_{1}$
  in \textup{(\ref{rooster})} are surjective.
\item
  \label{house2}
                There are isomorphisms
     \begin{equation*}
              \Gamma_{0}/\Gamma_{+}\cong\frac{\uR^\times_{n,+}}
                    {\uR^\times_{n,+,\, \fp\text{\rm{-ev}}}}\quad\mbox{and}
                    \quad
                    \Gamma_{+}/
                    \Gamma_{1}\cong\frac{\uR^\times_{n, +,\, \fp\text{\rm{-ev}}}}
   {(\uRn^\times)^2}.                    
     \end{equation*}
   \item
     \label{house3}
                $\#\Gamma_{0}/\Gamma_{+}\leq 2$,
     with equality if and only
     if the class $[\fp]$ of $\fp$ in $\Cln (F)$ of $F$ is of odd order.
   \item
     \label{house4}
   We have $\Gamma_{+}/\Gamma_{1}\cong
   \Coker(\sig_{\fp}) \cong \F_{2}^r$ with $0\leq r\leq d=[F:\Q]$.
   \end{enumerate}
 \end{prop}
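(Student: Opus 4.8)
The plan is to derive all four parts from the surjectivity statement (a), which carries the real content; parts (b)--(d) are then formal consequences together with one short $\F_2$-linear-algebra computation and a narrow-class-group translation.

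For (a), the containment of the images in the stated targets is immediate: since $\H$ is ramified at $\infty$ and $F_n$ is totally real, $\H_n$ is ramified at every archimedean place, so $\Nm(m)$ is totally positive for every $m\in\H_n^\times$; combined with the defining valuation condition this gives $\Nm(\widetilde{\M}_n^\times)\subseteq\uR_{n,+}^\times$ and $\Nm(\widetilde{\M}_{n,+}^\times)\subseteq\uR^\times_{n,+,\,\fp\text{-ev}}$, and $\Nm$ sends a scalar $\lambda$ to $\lambda^2$, hence kills $\uRn^\times$ modulo squares. For surjectivity I would invoke the Hasse--Schilling--Maass norm theorem to realize any totally positive $u\in\uR_{n,+}^\times$ as $\Nm(g)$ with $g\in\H_n^\times$, and then use strong approximation for the norm-one group $\H_{n,1}^\times$: since $\H_n$ is split at every finite prime (Proposition~\ref{unramified}), in particular at the place $\fp$ lying in $S$, the group $\H_{n,1}^\times$ satisfies strong approximation relative to $S$, so $g$ may be corrected by a norm-one element into $\widetilde{\M}_n^\times$ without changing its reduced norm. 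This gives $\Nm_0$ onto; the same $g$ lands automatically in $\widetilde{\M}_{n,+}^\times$ when $\val_\fp(u)$ is even, giving $\Nm_+$ onto, while $\Nm_1$ is onto its trivial target. I expect this step---pinning down the correct strong-approximation statement for the $S$-indefinite but archimedean-definite algebra $\H_n$---to be the main obstacle; everything else is bookkeeping.

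Granting (a), part (b) follows by identifying kernels. The map $\Gamma_1\hookrightarrow\Gamma_+$ is injective because $\widetilde{\M}_{n,1}^\times\cap\uRn^\times=\{\pm1\}$, and I claim $\ker\Nm_+=\Gamma_1$: if $[m]\in\Gamma_+$ has $\Nm(m)=u^2$ with $u\in\uRn^\times$, then $mu^{-1}$ has reduced norm $1$ and represents the same class modulo scalars, so $[m]\in\Gamma_1$; the reverse inclusion is clear. With $\Nm_+$ surjective this yields $\Gamma_+/\Gamma_1\cong\uR^\times_{n,+,\,\fp\text{-ev}}/(\uRn^\times)^2$. For the other isomorphism I would compose $\Nm_0$ with the well-defined quotient map onto $\uR_{n,+}^\times/\uR^\times_{n,+,\,\fp\text{-ev}}$ (well defined since $(\uRn^\times)^2\subseteq\uR^\times_{n,+,\,\fp\text{-ev}}$); its kernel is exactly $\Gamma_+$ by definition, and surjectivity of $\Nm_0$ gives $\Gamma_0/\Gamma_+\cong\uR_{n,+}^\times/\uR^\times_{n,+,\,\fp\text{-ev}}$.

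Part (c) is then immediate: $\uR_{n,+}^\times/\uR^\times_{n,+,\,\fp\text{-ev}}$ embeds in $\Z/2\Z$ via $\val_\fp\bmod 2$, so the index is at most $2$, with equality exactly when some totally positive $S$-unit $u$ has odd $\fp$-valuation. Writing $(u)=\fp^a$ as a fractional $\uOO_n$-ideal with $a=\val_\fp(u)$, such a $u$ is a totally positive generator of $\fp^a$, so it exists for some odd $a$ if and only if $[\fp]^a=0$ in $\Cln(F)$ for some odd $a$, i.e. if and only if $[\fp]$ has odd order in $\Cln(F)$. For (d), note that $\uR^\times_{n,+,\,\fp\text{-ev}}$ is precisely the kernel of $\sig_\fp$ on $\uRn^\times$, so after passing to $V:=\uRn^\times/(\uRn^\times)^2$ one gets $\uR^\times_{n,+,\,\fp\text{-ev}}/(\uRn^\times)^2=\ker(\overline{\sig}_\fp)$, where $\overline{\sig}_\fp\colon V\to\F_2^{\,d+1}$ is induced by $\sig_\fp$. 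Under Hypothesis~\ref{assume} one has $r_+(n)=1$, so by the unit computation $\uRn^\times\cong\Z/2\Z\times\Z^{d}$ and $\dim_{\F_2}V=d+1$, equal to the dimension of the target of $\sig_\fp$; rank--nullity then forces $\dim_{\F_2}\ker(\overline{\sig}_\fp)=\dim_{\F_2}\Coker(\sig_\fp)$, so the two are isomorphic as $\F_2$-spaces and $\Gamma_+/\Gamma_1\cong\Coker(\sig_\fp)$. Finally $\sig_\fp(-1)=(1,\dots,1,0)\neq 0$ shows that the image of $\sig_\fp$ is nonzero, whence $r=\dim_{\F_2}\Coker(\sig_\fp)\leq d$.
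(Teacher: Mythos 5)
Your proposal is correct, and in parts (b)--(d) it follows the paper's own route almost verbatim: you identify $\Gamma_+$ and $\Gamma_1$ as pullbacks under the reduced norm so that the quotients become quotients of norm images, you prove (c) by the same valuation-mod-$2$ embedding plus the observation that a totally positive generator of $\fp^a$ for odd $a$ exists exactly when $[\fp]$ has odd order in $\Cln(F)$, and your rank--nullity computation in (d) is precisely the paper's four-term exact sequence $1\to \uR^\times_{n,+,\,\fp\text{-ev}}/(\uRn^\times)^2 \to \uRn^\times/(\uRn^\times)^2 \xrightarrow{\sig_\fp} \F_2^{d+1}\to \Coker(\sig_\fp)\to 1$ read off dimensionwise, together with the same use of $\sig_\fp(-1)\neq 0$ to get $r\leq d$. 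The one genuine divergence is part (a): the paper does not argue surjectivity at all but cites \cite[Lemma 3.19]{IJKLZ2}, which produces, for any totally positive $x\in\uR^\times_{n,+}$, an element $\gamma\in\widetilde{\M}_n$ of reduced norm $x$ (automatically a unit because its norm is one in $\uRn^\times$). You instead prove this input from scratch: Hasse--Schilling--Maass to solve $\Norm_{\H_n/F_n}(g)=u$ in $\H_n^\times$, then strong approximation for $\H_{n,1}^\times$ --- correctly justified by the splitting of $\H_n$ at $\fp$, which makes the $S$-component noncompact --- to replace $g$ by $gh$ lying in $\widetilde{\M}_n^\times$ with the same norm. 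That is the standard argument underlying lemmas of this type and it is sound; the step you leave implicit is the local one, namely that at each finite $v\neq\fp$ where $g\notin\widetilde{\M}_{n,v}^\times$ there is $h_v$ of reduced norm $1$ with $gh_v\in\widetilde{\M}_{n,v}^\times$ (elementary divisors in $\Mattt(F_v)$, using that $\Nm(g)$ is a $v$-adic unit), after which strong approximation glues these local corrections into a global norm-one element. So your version buys self-containedness at the price of that routine local computation, while the paper buys brevity by outsourcing exactly this point to its companion paper.
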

 \begin{proof} First we show that $\Nm_0$ is surjective: pick any
   $x\in\uR_{n,+}^\times$.  Then by \cite[Lemma 3.19]{IJKLZ2}
   there
   exists a $\gamma\in\widetilde{\M}$ of norm $x$.  Observe that $\gamma$ is
   a unit since its norm is.  Thus $\gamma$ gives an element of
   $\Gamma_0$ and $\Nm_0$ is surjective.  A similar argument holds for
   $\Nm_+$ and $\Nm_1$.

   To derive \eqref{house2} from \eqref{house1}, 
   note that all the definitions of the $\Gamma$'s are
   equivalent to the pullbacks under the reduced norm map of the
   groups in (\ref{rooster}).  Thus their quotients are the same as
   the quotients of the images of their norms.

   With that done, \eqref{house3} follows from the second isomorphism in
   \eqref{house2}.  It is
   clear that $\#\Gamma_0/\Gamma_+ \le 2$, and
   the class $[\fp]$ of $\fp$ in $\Cln(F)$ is of odd order if and only if there
   is a totally positive element of $F$ generating the ideal $\fp^k$ for some
   odd $k$.  If there is no such element, then
   $\uR_{n,+}^\times = \uR_{n, +,\, \fp\text{\rm{-ev}}}^\times$
   and the index is $1$,
   whereas if there is such an element it generates the quotient and the index
   must be $2$.

   For \eqref{house4}, note that
    \[
    \frac{\uRn^\times}
         {(\uRn^\times)^2}\cong \F_{2}^{d+1}.
         \]
         The assertion then follows from \eqref{house2} and  the exact sequence 
   \begin{equation*}
 \label{none1}
 1\longrightarrow \frac{\uR_{n,+,\, \fp\text{-ev}}}
 {(\uR_{n}^\times)^2}         \longrightarrow
 \frac{\uRn^\times}
  {(\uRn^\times)^{2}}\stackrel{\sig_\fp}{\,\longrightarrow} \F_2^{d+1}\longrightarrow
  \Coker(\sig_\fp)\longrightarrow 1
   \end{equation*}
   upon observing that $\dim_{\F_2}\Coker(\sig_{\fp})\leq d$
   since $\sig_{\fp}(-1)$ is nontrivial.
\end{proof}
  
  \begin{theorem}
    \label{pills}
    \begin{enumerate}[\upshape (a)]
    \item
      \label{pills1}

      The groups $\Gamma_{0},\, \Gamma_{+},\, \Gamma_1$
      are discrete cocompact
            subgroups of $\PGLT(F_{\fp})$.  Let $\Delta=\Delta_\fp $.
            Then $
            \Gamma_{+},\, \Gamma_1$ act on
            $\Delta$ without inversions and the quotients
            $
            \Grp=\Gamma_{+}\backslash \Delta,\,
            \Gro=\Gamma_{1}\backslash \Delta$ are finite bipartite graphs.
            The group
            $\Gamma_{0}$ acts on
            $\Delta$ possibly with inversions; the quotient
            $\Grz=\Gamma_{0}\backslash\Delta$ is a  finite Kurihara
            graph.
          \item
            \label{pills2}
          The natural covering $\pi\colon  \Grp\rightarrow\Grz$
    is \'{e}tale of degree $1$ or $2$. The degree is $2$ if and only if
    the class $[\fp]$ in $\Cln (F)$ is of odd order.
    \end{enumerate}
  \end{theorem}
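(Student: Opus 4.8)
The plan is to deduce part (a) from the structure of $\H_n$ as a totally definite quaternion division algebra, and part (b) from the computation of $\Gamma_0/\Gamma_+$ already carried out in Proposition \ref{house}. For discreteness and cocompactness, note first that by Proposition \ref{unramified} the algebra $\H_n$ is unramified at every finite prime of $F_n$, while at each real place $v$ of $F_n$ we have $\H_n\otimes_{F_n}(F_n)_v\cong\H\otimes_\Q\R$, the division algebra of real Hamilton quaternions; thus $\H_n$ is totally definite and in particular a division algebra. The chosen splitting $\H_n\otimes_F F_\fp\cong\Mat_{2\times 2}(F_\fp)$ (with $F=F_n$) realizes $\Gamma_0=\Pp\!\widetilde{\M}_n^\times$ as an $S$-arithmetic subgroup of $\PGLT(F_\fp)$ with $S=\{\fp\}$. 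Discreteness is the usual integrality-at-other-places argument, and cocompactness holds because $\H_n$ is anisotropic, this being the standard compactness criterion for arithmetic quotients attached to division quaternion algebras; the subgroups $\Gamma_+\subseteq\Gamma_0$ and $\Gamma_1\subseteq\Gamma_0$ inherit both properties.

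For the inversion statements I would use the bipartite structure of $\Delta=\Delta_\fp$: its two vertex classes are distinguished by the parity of $\val_\fp(\det)$ of a representing lattice, and an element of $\PGLT(F_\fp)$ inverts an edge exactly when it interchanges the two classes, i.e.\ when $\val_\fp(\det)$ is odd. Under the splitting the determinant is the reduced norm $\Nm$. Since $\Nm\equiv 1$ on $\Gamma_1$ and $\val_\fp(\Nm)$ is even on $\Gamma_+$ by definition, neither group inverts an edge and both preserve the bipartition, so the finite quotients $\Gro$ and $\Grp$ are bipartite graphs. The full group $\Gamma_0$ may contain elements with $\val_\fp(\Nm)$ odd, so it can act with inversions, and the quotient $\Grz$ is then a Kurihara h-graph. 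This proves part (a).

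For part (b), the map $m\mapsto\val_\fp(\Nm(m))\bmod 2$ is a homomorphism $\Gamma_0\to\Z/2\Z$ with kernel $\Gamma_+$; hence $\Gamma_+\unlhd\Gamma_0$, the finite group $\Gamma_0/\Gamma_+$ acts on $\Grp=\Gamma_+\backslash\Delta$, and $\pi$ is the associated quotient map, of degree $[\Gamma_0:\Gamma_+]$. By Proposition \ref{house}(c) this index is $1$ or $2$, and equals $2$ exactly when $[\fp]$ has odd order in $\Cln(F)$, which gives the numerical assertion. To see that $\pi$ is \'{e}tale it suffices to treat the index-$2$ case. Any $g\in\Gamma_0\setminus\Gamma_+$ has $\val_\fp(\Nm(g))$ odd, hence swaps the two vertex classes of $\Delta$ and so fixes no vertex; therefore $\Stab_{\Gamma_0}(\bv)=\Stab_{\Gamma_+}(\bv)$ for every $\bv\in\Ver(\Delta)$. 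Consequently the vertex groups agree along $\pi$, the nontrivial element of $\Gamma_0/\Gamma_+$ acts freely on $\Ver(\Grp)$, and the induced map on the star of each vertex is a bijection, so $\pi$ is \'{e}tale of degree $2$.

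The step I expect to be the main obstacle is making this last assertion fully rigorous in the presence of inversions, since the target $\Grz$ is an h-graph while the source $\Grp$ is an ordinary graph. The only cells of $\Delta$ fixed by $g$ are the midpoints of inverted edges, and these are precisely the loci that appear as half-edges of the Kurihara graph $\Grz$; they are a feature of the target, not ramification of $\pi$. I would make this precise by subdividing the inverted edge-orbits of $\Gamma_0$: after this subdivision, which does not change the relevant fundamental groups by Theorem \ref{theorem: subdivide?}, the induced $\Z/2\Z$-action becomes genuinely free and $\pi$ is an honest connected double cover, confirming that it is \'{e}tale of degree $2$.
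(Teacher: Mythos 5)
Your first three paragraphs are correct and supply exactly the details that the paper's own proof delegates to citations: the paper disposes of part (a) by pointing to Proposition \ref{house}\eqref{house3}, \eqref{house4} and to the general discussion in \cite[Section 4]{K}, and of part (b) by pointing to Proposition \ref{house}\eqref{house3}. Your cocompactness argument via total definiteness of $\H_n$, the parity criterion (an element preserves the bipartition of $\Ver(\Delta)$ if and only if $\val_\fp$ of its reduced norm is even, so $\Gamma_1$ and $\Gamma_+$ act without inversions while $\Gamma_0$ need not), and the identification of the degree of $\pi$ with $[\Gamma_0:\Gamma_+]$, computed in Proposition \ref{house}\eqref{house3}, are the intended content. (One small overstatement: an element interchanging the two vertex classes need not invert an edge --- it could be a hyperbolic translation of odd length; but you only use the implication you actually need, namely that class-preserving elements cannot invert edges.)

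The final paragraph, however, is wrong, and it is the one place where you claimed to make the delicate point rigorous. Suppose $[\Gamma_0:\Gamma_+]=2$ and some $g\in\Gamma_0$ inverts an edge $\be$ of $\Delta$ (this genuinely occurs, e.g.\ for $n=8$, where $\Grz$ has a half-edge). Since $g$ swaps the endpoints of $\be$, which lie in opposite bipartition classes, $\val_\fp(\Nm(g))$ is odd, so $g\in\Gamma_0\setminus\Gamma_+$. In the subdivided tree $\Delta_{\Gamma_0}$ the midpoint of $\be$ is a vertex $\bw$, and $g$ fixes $\bw$. Therefore the nontrivial element of $\Gamma_0/\Gamma_+$ fixes the image of $\bw$ in $\Gamma_+\backslash\Delta_{\Gamma_0}$: the induced $\Z/2\Z$-action is \emph{not} free, the fiber of $\Gamma_+\backslash\Delta_{\Gamma_0}\rightarrow\Gamma_0\backslash\Delta_{\Gamma_0}$ over the image of $\bw$ is a single vertex, and this map of ordinary graphs is a \emph{ramified} double cover. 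Subdividing does not remove the isotropy at inverted edges; it converts it into honest ramification at the new vertices. (Note also that Theorem \ref{theorem: subdivide?} is stated only for actions without inversions and compares fundamental groups, not covering maps, so it cannot be invoked for $\Gamma_0$ in any case.)

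Fortunately the patch is unnecessary: your third paragraph already contains the complete argument, provided \'{e}taleness is read --- as the statement intends --- in the h-graph category. There the local condition is a bijection on stars at genuine vertices of $\Grz$, and the midpoints of inverted edges are not vertices of $\Grz$: they are recorded by half-edges, which is where the $\Z/2$-isotropy lives by design. You proved $\Stab_{\Gamma_0}(\bv)=\Stab_{\Gamma_+}(\bv)$ for all $\bv\in\Ver(\Delta)$, and the same holds for stabilizers of directed edges (an element preserving a directed edge fixes both endpoints), so the star of each vertex of $\Grp$ maps bijectively to the star of its image in $\Grz$. That is \'{e}taleness, of degree $[\Gamma_0:\Gamma_+]\in\{1,2\}$, and together with Proposition \ref{house}\eqref{house3} the proof of part (b) is complete at that point.
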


  \begin{proof}
    The assertion \eqref{pills1} follows from \eqref{house3} and
\eqref{house4}
of Proposition \ref{house}.  A general discussion is in \cite[Section 4]
{K}.

Part \eqref{pills2} follows from Proposition \ref{house}\eqref{house3}.
  \end{proof}

  \subsection{Connecting unitary groups to the Hamilton quaternions}
\label{fiend}

Let $\H_{n,1}^\times$ be the subgroup of
$\H_n^\times$ of elements of norm $1$.
The following observation is standard and easy to check:

\begin{prop}\label{suquat}  For all $n$,
the map $\SUT (\cyc{n}) \to \H_{n,1}^\times$ defined by
$$\pmat{r+s\sqrt{-1}&t+u\sqrt{-1}\\-t+u\sqrt{-1}&r-s\sqrt{-1}} \mapsto
r - ui - tj - sk$$
is an isomorphism. 
\end{prop}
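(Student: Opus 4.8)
The plan is to exhibit the inverse of the given map as the restriction to $\H_{n,1}^\times$ of an explicit $F_n$-algebra embedding that splits $\H_n$ over $K_n$. Since $4\mid n$ we have $\sqrt{-1}\in K_n$ (a scalar, not to be confused with the quaternion $i$), and $K_n=F_n\oplus F_n\sqrt{-1}$ with $\overline{\sqrt{-1}}=-\sqrt{-1}$, where $\overline{\phantom{x}}$ is complex conjugation, the nontrivial element of $\Gal(K_n/F_n)$. I would first record the standard description of $\SUT(K_n)$: a matrix lies in $\SUT(K_n)$ exactly when it has the form $\pmat{a&b\\-\overline{b}&\overline{a}}$ with $a\overline{a}+b\overline{b}=1$, since such a matrix $A$ satisfies $A\,\overline{A}^{\,t}=(a\overline{a}+b\overline{b})\id$, and conversely $A^{-1}=\overline{A}^{\,t}$ together with $\det A=1$ forces this shape. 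Writing $a=r+s\sqrt{-1}$ and $b=t+u\sqrt{-1}$ with $r,s,t,u\in F_n$ gives exactly the parametrization in the statement.

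Next I would define an $F_n$-algebra homomorphism $\tau\colon\H_n\to\Mat_{2\times 2}(K_n)$ on the standard basis by
\begin{equation*}
\tau(1)=\pmat{1&0\\0&1},\qquad
\tau(i)=\pmat{0&-\sqrt{-1}\\-\sqrt{-1}&0},\qquad
\tau(j)=\pmat{0&-1\\1&0},\qquad
\tau(k)=\pmat{-\sqrt{-1}&0\\0&\sqrt{-1}}.
\end{equation*}
The one substantive point is to check that these matrices satisfy the Hamilton relations, namely $\tau(i)^2=\tau(j)^2=\tau(k)^2=-\id$ together with $\tau(i)\tau(j)=\tau(k)=-\tau(j)\tau(i)$ and the cyclic analogues; this is a direct $2\times2$ computation and is precisely the multiplicativity that makes $\tau$ an algebra homomorphism. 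Since $\H_n$ is a central simple $F_n$-algebra and $\tau(1)\ne 0$, the map $\tau$ is injective; as the four images are $K_n$-linearly independent in $\Mat_{2\times 2}(K_n)$, extending scalars yields the splitting isomorphism $\H_n\otimes_{F_n}K_n\xrightarrow{\sim}\Mat_{2\times 2}(K_n)$.

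I would then extend $\tau$ by $F_n$-linearity and compute, for $q=w+xi+yj+zk$, that
\begin{equation*}
\tau(q)=\pmat{w-z\sqrt{-1}&-y-x\sqrt{-1}\\ y-x\sqrt{-1}&w+z\sqrt{-1}},
\end{equation*}
whose determinant is $w^2+x^2+y^2+z^2=\Norm_{\H_n/F_n}(q)$. Thus $\tau(q)$ always has the unitary shape above, so $\tau$ carries $\H_{n,1}^\times$ into $\SUT(K_n)$; specializing to $q=r-ui-tj-sk$ returns $\pmat{r+s\sqrt{-1}&t+u\sqrt{-1}\\-t+u\sqrt{-1}&r-s\sqrt{-1}}$, which is precisely the inverse of the map in the proposition. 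In particular that map is a group homomorphism.

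Finally, bijectivity would follow formally. Injectivity is inherited from $\tau$. For surjectivity, given $A\in\SUT(K_n)$ written as $\pmat{a&b\\-\overline{b}&\overline{a}}$ with $a=r+s\sqrt{-1}$ and $b=t+u\sqrt{-1}$, the quaternion $q=r-ui-tj-sk$ has $\Norm_{\H_n/F_n}(q)=r^2+s^2+t^2+u^2=a\overline{a}+b\overline{b}=1$, so $q\in\H_{n,1}^\times$ and $\tau(q)=A$. Hence $\tau$ restricts to an isomorphism $\H_{n,1}^\times\xrightarrow{\sim}\SUT(K_n)$, and the stated map is its inverse. The main obstacle is the multiplicativity check of the second paragraph; the rest is the bookkeeping that matches the norm-one condition with $\det=1$ and the quaternion conjugation with the entrywise unitary shape.
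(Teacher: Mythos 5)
Your proof is correct. The paper offers no argument at all for this proposition—it is introduced with ``The following observation is standard and easy to check''—and your verification is exactly the standard one being alluded to: the classical identification of $\SU_2$ with the norm-one quaternions, realized here by the explicit splitting $\tau\colon\H_n\to\Mat_{2\times 2}(K_n)$ (using $\sqrt{-1}\in K_n$, which the paper's standing assumption $4\mid n$ guarantees), together with the check that $\tau$ and the stated map are mutually inverse on the norm-one/determinant-one loci.
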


 The map in Proposition \ref{suquat} restricts to an isomorphism
\begin{equation*}
  \Psi_n\colon \SUT(R_n)\stackrel{\simeq}{\longrightarrow}\widetilde{\M}_{n,1}^{\times},    
\end{equation*}
with an induced isomorphism
\begin{equation}
    \label{rounder1}
\overline{\Psi}_{n}\colon \PSUT(R_n)=\SUT(R_n)/\langle\pm 1\rangle\stackrel{\simeq}{
    \longrightarrow} \Pro\!\widetilde{\M}_{n,1}^{\times}:=
    \widetilde{\M}_{n,1}^{\times}/\langle \pm 1\rangle.
\end{equation}
We now ask whether there is an isomorphism for $\PUT$ compatible with
the isomorphism (\ref{rounder1}) for $\PSUT$.

First we define a map $\varphi_n\colon
\PUT(K_n)\mapsto\Pro\! \H_n^\times$.

\begin{definition1}
  \label{phi}
  {\rm
  For $A\in \UT(K_n)$, denote by $[A]$ its class in $\PUT(K_n)$.  Similarly,
  for $a\in \H_n^{\times}$, denote by $[a]$ its class in
  $\Pro\! \H_n^{\times}$.
  
  Suppose $A\in\UT(K_n)$ and $\alpha=\det(A)$ where
  $\alpha\overline{\alpha}=1$.  By Hilbert's Theorem 90 there is
  $\beta \in K_n^\times$ such that $\alpha = \overline{\beta}/\beta$.  Let
  $A'=\beta A$.  We have $\det A'=\beta^2\alpha =
  \beta\overline{\beta}\in F_n$. Hence $A'$ is of the form
  \begin{equation*}
    A'=\left(\begin{array}{rr}
        r+s\sqrt{-1} & t + u\sqrt{-1}\\
        -t+u\sqrt{-1} & r-s\sqrt{-1}
      \end{array}\right)
  \end{equation*}
  and we then define, for $[A]\in \PUT(R_n)$,
  \begin{equation}
    \label{pasta}
    \ovn([A])=[r-ui-tj-sk]\in\Pro\! \H_n^\times.
  \end{equation}
  }
\end{definition1}
\noindent Note that on $\PSUT(R_n)$ our map $\ovn$ agrees with
$\overline{\Psi}_{n}$.
\begin{remark1}
  \label{floors}
  Under the equivalent conditions of Proposition {\rm \ref{one/unramified}},
  the map $\ovn$ makes $\PUT(R_n)$ a discrete subgroup of $\PGL_{2}(F_\p)$,
  since $\PSUT(R_n)$ has finite index in $\PUT(R_n)$.
\end{remark1}
Applying $\ovn$ to $H$ and $T_n$ given in \eqref{guestroom} using $\beta = 1+\sqrt{-1},
1+\zeta_n^{-1}$, respectively, we obtain
$$\ovn([H]) = [h], \quad \ovn([T_n]) = [t_n], \,$$
where 

\begin{equation}\label{eq: H, Tn}
h := -i-k, \quad t_n =1+e^{2\pi k/n}:= 1+\frac{\zeta_n+\zeta_n^{-1}}{2} - \frac{(\zeta_n-\zeta_n^{-1})\sqrt{-1}k}{2}.
\end{equation}
\begin{remark1}\label{rem: h}
 Clearly, $h \in \M_n$ when $\M_n$ is of the form \eqref{eq: Mn}. 
\end{remark1}
\begin{theorem}
  \label{thm: tn}
Recall that $4|n$ and $n > 4$.  Let $\T_n$ be the $\uOOn$-order in $\H_n$ 
generated by $t_n$ and $j$.
\begin{enumerate}[\upshape (a)]
\item
  \label{thm: tn1}
If $n$ is not a power of $2$, 
then $\T_n$ is maximal and $\T_n^\times/\uOOn^\times \simeq D_n$, the dihedral
group of order $2n$.
\item
\label{thm: tn2}
  If $n$ is a power of $2$, then $\T_n$ is not maximal.  Its discriminant
is $\fp^2$ and it is contained in exactly two maximal orders, the order 
generated by $t_n$ and $(1+i)/(\zeta_n + \zeta_n^{-1})$ and its conjugate by
$t_n$.  The intersection of these two orders contains $\T_n$ with
quotient $\fp$, and conjugation by $t_n$ exchanges them.  In particular 
$\T_n$ is not an Eichler order.  Further, we have
$\T_n^\times/  \uOOn^\times\simeq D_{n/2}$, the dihedral group of order
$n$, unless $n = 8$, and likewise for $\M$ a maximal order containing $\T_n$ 
we have $\M^\times/\uOOn^\times \simeq D_{n/2}$.  Additionally,  both
$\M \cap \M^{t_n}$ and $\T_n$ have stabilizer $D_n$.
\end{enumerate}
\end{theorem}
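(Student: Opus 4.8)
The plan is to recognize $\T_n$ as a standard CM-order and reduce everything to computations with $\OO_n$ and $\mathfrak{d}_{K_n/F_n}$. Writing $\omega := e^{2\pi k/n} = t_n - 1$, the subalgebra $F_n[k]\subseteq\H_n$ is isomorphic to $K_n$ via $k\mapsto\sqrt{-1}$, carrying $\omega$ to $\zeta_n$ and $\uOOn[\omega]$ onto $\OO_n=\uOOn[\zeta_n]$. Since $j\omega j^{-1}=\omega^{-1}$ and $j^2=-1$, the order generated by $t_n$ and $j$ is $\T_n=\OO_n\oplus\OO_n j$, the standard order attached to $K_n/F_n$ inside $\H_n=(K_n,-1/F_n)$. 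A routine trace-form computation gives reduced discriminant $(j^2)\cdot\mathfrak{d}_{K_n/F_n}=\mathfrak{d}_{K_n/F_n}$, and since $\OO_{K_n}=\OO_{F_n}[\zeta_n]$ this equals $((\zeta_n-\zeta_n^{-1})^2)=((\zeta_n-1)(\zeta_n+1))^2\cap\uOOn$. By Proposition \ref{unramified}, $\H_n$ is unramified at every finite prime, so $\T_n$ is maximal iff this ideal is trivial. Evaluating $N_{K_n/\Q}(1\mp\zeta_n)=\Phi_n(\pm1)$ shows it is $(1)$ exactly when $n$ has two distinct prime factors ($d>1$), giving (a), and that $\val_\fp$ equals $2$ when $n=2^s$, giving the discriminant $\fp^2$ of (b).

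For the unit groups I would first exhibit the dihedral subgroup and then bound from above. In $\Pp\H_n^\times$ one has $[t_n]^2=[\omega]$ (from $t_n^2=(2+\zeta_n+\zeta_n^{-1})\omega$) and $[j][t_n][j]^{-1}=[t_n]^{-1}$ (from $\bar t_n=\omega^{-1}t_n$), with $[t_n]$ of order exactly $n$ since $t_n^m\in F_n^\times\iff\omega^m=1$. When $d>1$, the element $t_n=1+\zeta_n$ is a unit (its norm $2+\zeta_n+\zeta_n^{-1}$ has trivial norm to $\Q$), so $\langle[t_n],[j]\rangle\cong D_n$; this matches the Hasse unit index $[\OO_n^\times:\mu_n\uOOn^\times]=2$, under which $\OO_n^\times/\uOOn^\times=\langle[t_n]\rangle\cong C_n$. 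When $n=2^s$ the index is $1$, $t_n$ is not a unit but $\omega$ is, and $\OO_n^\times/\uOOn^\times=\langle[\omega]\rangle\cong C_{n/2}$, giving $\langle[\omega],[j]\rangle\cong D_{n/2}$. For the reverse inclusion I would use that $\H_n$ is totally definite, so $\T_n^\times/\uOOn^\times$ is a finite subgroup of $SO(3)$ — cyclic, dihedral, or $A_4,S_4,A_5$ — and, having an element of order $\geq n/2>5$, must be cyclic or dihedral; matching the rotation subgroup to the explicit $C_n$ (resp.\ $C_{n/2}$), together with the fact that the maximal order of a torsion element of $\Pp\widetilde{\M}_n^\times$ is $n$, forces equality with $D_n$ (resp.\ $D_{n/2}$). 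The case $n=8$, where the cyclic order is too small for this classification, is anomalous and must be settled by direct computation.

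For the remaining local assertions of (b) I would pass to the completion at $\fp$, where $\H_n\otimes F_\fp=\Mattt(F_\fp)$ and $\T_n\otimes\OO_\fp$ is an order of reduced discriminant $\fp^2$. Writing down an explicit $\OO_\fp$-basis, I would check that it is the non-Eichler order of that discriminant, that it lies in exactly two maximal orders, and that their intersection is the Eichler order of level $\fp$ containing $\T_n$ with quotient $\OO_\fp/\fp$. These two maximal orders are the global $\M$ and $\M^{t_n}$: conjugation by $t_n$ normalizes $\T_n$ (since $t_n\omega t_n^{-1}=\omega$ and $t_n j t_n^{-1}=\omega j$) while its reduced norm $2+\zeta_n+\zeta_n^{-1}$ has odd $\fp$-valuation, so it exchanges the two vertices of the corresponding edge. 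The isomorphism $\M^\times/\uOOn^\times\cong D_{n/2}$ follows from the unit argument of the previous paragraph applied to $\M\supseteq\T_n$.

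Finally, for the stabilizer statement I would argue on $\Delta=\Delta_\fp$. The Eichler order $\M\cap\M^{t_n}$ is an edge $\be$ with endpoints the vertices of $\M$ and $\M^{t_n}$, and $\T_n$ is the distinguished index-$\fp$ suborder inside it; both are normalized by $t_n$ and $j$, so their stabilizers in $\Gamma_0=\Pp\widetilde{\M}_n^\times$ contain $\langle[t_n],[j]\rangle\cong D_n$. For the upper bound, the orientation-preserving part of $\Stab(\be)$ fixes the vertex of $\M$, hence injects into the vertex group $\M^\times/\uOOn^\times\cong D_{n/2}$ of order $n$; since $t_n$ realizes the inversion of $\be$, we get $\#\Stab(\be)\leq 2n$, forcing $\Stab(\be)=D_n$. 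As the normalizer of $\T_n$ permutes $\{\M,\M^{t_n}\}$ it stabilizes $\be$, so its stabilizer is squeezed between $D_n$ and $\Stab(\be)=D_n$, giving $D_n$ as well. I expect the main obstacle to be the exact determination of the finite unit groups — both the upper bound ruling out the exceptional subgroups $A_4,S_4,A_5$ and the genuinely anomalous case $n=8$ — together with the hands-on local classification of $\T_n$ as a non-Eichler order; by contrast the discriminant computation and the concluding stabilizer count are essentially formal.
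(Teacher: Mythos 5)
Your proposal is sound and reproduces the theorem, but only part of it travels the paper's road. The discriminant step is the same computation in different clothing: the paper writes out the trace form on the basis $1,t_n,j,t_nj$ and reads off the reduced discriminant $(z^2-4)$, $z=\zeta_n+\zeta_n^{-1}$, while you obtain the same ideal structurally from $\T_n=\OO_n\oplus\OO_n j$ as the relative discriminant of $K_n/F_n$; both then invoke Proposition \ref{unramified} (maximal $\iff$ trivial reduced discriminant) and the evaluation of $\Phi_n(\pm 1)$. The genuine divergence is in the unit-group upper bound and in the stabilizer claim. The paper rules out groups larger than the exhibited dihedral group by listing the finite subgroups of $\PGLT(\BC)$ and then excluding $D_{kn}$, $k>1$, by a degree count on the subfield generated by an element of projective order $kn$ (this is where its $n=8$ exception and the extra $k=2$ discussion for $n=2^s$ live). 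You instead observe that the group is cyclic or dihedral with rotation axis that of $[\omega]$, so its rotation subgroup is exactly $(\T_n\cap K_n)^\times/\uOOn^\times=\OO_n^\times/\uOOn^\times$, which the Hasse unit index identifies as $C_n$ ($d>1$) or $C_{n/2}$ ($n=2^s$). This is cleaner: it eliminates the $D_{kn}$ casework, makes the dichotomy in the theorem visibly equivalent to the dichotomy in the Hasse index, and isolates $n=8$ as the only case needing hand computation (the paper must treat it by hand as well). For part (b) the paper enlarges $\T_n$ by explicit global elements such as $(1+j)/z$, while you propose a completion-at-$\fp$ computation; these are sketches of the same routine verification, though note that neither your plan nor, strictly, the paper's proof pins down the \emph{named} maximal order of the statement (generated by $t_n$ and $(1+i)/(\zeta_n+\zeta_n^{-1})$ -- the proof's element even differs from the statement's), so one extra line identifying your two local maximal orders with that order and its $t_n$-conjugate is needed. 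Finally, for the stabilizers the paper reuses its finite-subgroup classification, whereas you argue on the tree by bounding the inversion-free part of $\Stab(\be)$ by a vertex group; both succeed.

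One step in your stabilizer argument does need justification: you assume the $\Gamma_0$-stabilizer of the vertex of $\M$ \emph{is} $\M^\times/\uOOn^\times$. A priori an element $[\gamma]\in\Gamma_0$ fixing that vertex only normalizes $\M$, and since $\H_n$ is unramified at all finite primes its two-sided ideals are central, so all one gets is $\gamma\M=\fp^r\M$ for some $r\in\Z$; if $\fp$ were nonprincipal with $\fp\M$ principal, the stabilizer would strictly contain $\Pp\!\M^\times$ and your counting bound $\#\Stab(\be)\leq 2n$ would not follow. In the relevant case $n=2^s$ the prime $\fp=\bigl((1-\zeta_n)(1-\zeta_n^{-1})\bigr)=(2-z)$ is principal, so $\gamma(2-z)^{-r}\in\M^\times$ and the identification $\Stab_{\Gamma_0}(\bv_{\M})=\Pp\!\M^\times$ does hold; with that one-line repair your argument is complete to the same standard as the paper's own proof.
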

\begin{proof}
For this proof only, let $z = \zeta_n + \zeta_n^{-1}$.
Consider the $\uOOn$-submodule $S_n$ generated by 
$1, t_n, j, t_n j$.  Our first claim is that $S_n$ is an order of discriminant
$(z^2-4)$.  To show that it is an order, we check that
it is closed under multiplication.  Indeed, $t_n, j$ are integral, so their
squares are still in the order, and once we show that $jt_n \in S_n$
the remaining products will follow by associativity.  In fact
\begin{equation}
  \label{dill}
  jt_n = (z + 2) j - t_n j.
\end{equation}
  To evaluate $\Disc S_n$, 
we compute the matrix of traces of products of the basis vectors, obtaining
$$\begin{bmatrix}2&z+2&0&0\\z+2&z^2+2z&0&0\\0&0&-2&-z-2\\0&0&-z-2&-2z-4\\
\end{bmatrix}.$$
The top left $2 \times 2$ block has determinant $z^2 - 4$, the bottom right 
$-(z^2-4)$, so our claim follows.

This proves that $S_n = \T_n$ is a maximal order in the case where $n$ is
not a power of $2$, since in that case $z^2 - 4 = (\zeta_n^2-1)^2/\zeta_n^2$ is
a unit.  When $n$ is a power of $2$, the determinant generates the $4$th power 
of the prime $\fp$ of $\uOOn$ above $2$, so the discriminant is
$\fp^2$ and an order that contains $S_n$ 
with quotient $(\mathcal{O}_n/\fp)^2$ is maximal.  
We may enlarge the order by adjoining any of $1+j, (t_n-1)(1+j), t_n(1+j)$
divided by $z$, or their conjugates by $t_n$.  It is
easily checked that all of these orders contain $t_n(1+j)/z$, and that
$t_n(1+j)/z$ generates an Eichler order of discriminant $\fp$ as described.

To prove that the unit group is as claimed, we notice that by construction
$t_n$ is the image of the matrix $T_n$, whose order in $\PGLT$ is exactly $n$.
It also holds that $jt_nj^{-1} = t_n^{-1}$ up to scalars, so in the first case
where $t_n$ is a unit we obtain the
dihedral group $D_n$ as a subgroup of $\T_n^\times/\uOOn^\times$.  In the second
case $t_n$ is not a unit; however, $t_n^2/z$ is a unit, and this gives a
unit group of $D_{n/2}$ as claimed.
To show that $\M^\times/\uOOn^\times$ is no larger than this, we consider the 
list of maximal subgroups of $\PGLT({\bf C})$.  The tetrahedral, octahedral,
and icosahedral groups have no dihedral subgroups larger than $D_5$, so these 
are excluded except in the case $n = 8$, in which we find $\Sc_4$; 
the only other possibility is that the unit group is $D_{kn}$ for some $k > 1$.
In the first case, that is not possible, because the subring of $\H_n$ 
obtained by adjoining an element of order $kn$ to $F_n$ would be a subfield of 
degree $\phi(kn)/[F_n:\Q] > 2$, a contradiction.  In the second case it also
does not occur.  The same argument shows that we would have to have $k = 2$,
and the only possible unit of order $n$ would be a scalar multiple of 
$t_n$.  However, no such multiple has unit norm, so we cannot obtain an
automorphism of a maximal order this way.  On the other hand, conjugation
by $t_n$ is of order $2$, since $t_n^2$ is a scalar.  Thus the
Eichler order $\M \cap \M^{t_n}$ and its canonically defined suborder $\T_n$
are preserved by conjugation by $t_n$; this has order $n$ and, together with
the $D_{n/2}$, generates a group isomorphic to $D_n$.  The same argument as
in case $1$ shows that this is the stabilizer of $\M \cap \M^{t_n}$ and $\T_n$.
\end{proof}
\begin{theorem} 
  \label{candy}
  Assume Hypothesis \textup{\ref{assume}}  and use the notation 
  of Definition \textup{\ref{carrot}.}
  For the map $\ovn$ in \eqref{pasta} 
  \begin{equation*}
    \ovn(\PUT(R_n))\subset
    \Pro\!\widetilde{\M}_n^\times = \Gamma_{0}(\M_n,
    \fp):=\Gamma_{0,n}
  \end{equation*}
    if and only if the equivalent conditions of Proposition  \textup{\ref{spades}}
    are satisfied.
\end{theorem}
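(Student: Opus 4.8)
The plan is to reduce the statement to checking coset representatives of $\PSUT(R_n)$ in $\PUT(R_n)$, to compute the image of each representative explicitly, and then to recognize the resulting condition as the Galois-cohomology hypothesis of Proposition~\ref{spades}.

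First I would exploit that $\ovn$ is a homomorphism whose restriction to $\PSUT(R_n)$ is the isomorphism $\overline{\Psi}_n\colon\PSUT(R_n)\xrightarrow{\sim}\Gamma_1=\Pro\!\widetilde{\M}_{n,1}^\times\subseteq\Gamma_0$; in particular $\ovn(\PSUT(R_n))\subseteq\Gamma_0$ unconditionally. By the bottom row of diagram~\eqref{reds}, $\det$ identifies $\PUT(R_n)/\PSUT(R_n)$ with $R_{n,1}^\times/(R_{n,1}^\times)^2$, and for $\alpha\in R_{n,1}^\times$ the matrix $A_\alpha:=\left[\begin{smallmatrix}1&0\\0&\alpha\end{smallmatrix}\right]$ lies in $\UT(R_n)$ and represents the class of $\alpha$. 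Since $\Gamma_0$ is a group containing $\Gamma_1$, one gets $\ovn(\PUT(R_n))\subseteq\Gamma_0$ if and only if $\ovn([A_\alpha])\in\Gamma_0$ for a set of representatives $\alpha$ of $R_{n,1}^\times/(R_{n,1}^\times)^2$.

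Next I would compute $\ovn([A_\alpha])$ directly from Definition~\ref{phi}. Choosing $\beta\in K_n^\times$ with $\alpha=\overline\beta/\beta$ (Hilbert~90), we have $A_\alpha'=\beta A_\alpha=\left[\begin{smallmatrix}\beta&0\\0&\overline\beta\end{smallmatrix}\right]$, so $t=u=0$ and $\ovn([A_\alpha])=[\overline\beta]$, the class of $\overline\beta$ under the embedding $K_n=F_n(\sqrt{-1})\hookrightarrow\H_n$ sending $\sqrt{-1}$ to $k$. The key local input is then the integrality lemma $K_n\cap\widetilde{\M}_n=R_n$: writing $\zeta_n=\cos(2\pi/n)+\sqrt{-1}\,\sin(2\pi/n)$, the quantities $\cos(2\pi/n)=(\zeta_n+\zeta_n^{-1})/2$ and $\sin(2\pi/n)=(\zeta_n-\zeta_n^{-1})/(2\sqrt{-1})$ both lie in $\uRn$, so $R_n=\uRn[\sqrt{-1}]$, and a coordinate computation in the basis $1,i,j,(1+i+j+k)/2$ of $\widetilde{\M}_n$ shows that the elements of $\widetilde{\M}_n$ lying in the subfield $F_n+F_nk$ are exactly $\uRn+\uRn k$. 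This yields $K_n\cap\widetilde{\M}_n^\times=R_n^\times$.

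Finally, since $\Gamma_0=\widetilde{\M}_n^\times F_n^\times/F_n^\times$ inside $\Pro\!\H_n^\times=\H_n^\times/F_n^\times$ and $\overline\beta\in K_n$, the condition $[\overline\beta]\in\Gamma_0$ is equivalent to $\overline\beta\in F_n^\times(K_n\cap\widetilde{\M}_n^\times)=F_n^\times R_n^\times$, hence to $\beta\in F_n^\times R_n^\times$; writing $\beta=\lambda\gamma$ with $\lambda\in F_n^\times$ and $\gamma\in R_n^\times$ this says $\alpha=\overline\beta/\beta=\overline\gamma/\gamma$, i.e.\ $\alpha$ lies in the subgroup $\{\gamma/\overline\gamma:\gamma\in R_n^\times\}$ of $R_{n,1}^\times$. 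This subgroup contains every square, since $\mu\in R_{n,1}^\times$ gives $\mu^2=\mu/\overline\mu$; hence demanding the membership for all coset representatives $\alpha$ is the same as demanding it for every $\alpha\in R_{n,1}^\times$, which is exactly $H^1(\Gal(K_n/F_n),R_n^\times)=0$, condition~(a) of Proposition~\ref{spades}. By Proposition~\ref{spades}, (a) is equivalent to (b), and both implications of the theorem follow. I expect the main obstacle to be the integrality lemma $K_n\cap\widetilde{\M}_n=R_n$, which is what allows the passage from a unit of the maximal order $\widetilde{\M}_n$ to a unit of $R_n$; the remainder is bookkeeping with the coset decomposition and the cyclic Galois cohomology of $R_n^\times$.
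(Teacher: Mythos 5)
Your proof is correct, and while it rests on the same arithmetic core as the paper's---namely, whether the Hilbert~90 scalar $\beta$ of Definition~\ref{phi} can be taken in $R_n^\times$---it is organized along a genuinely different route. The paper works with an arbitrary $A\in\UT(R_n)$ in both directions: for sufficiency, $H^1(\Gal(K_n/F_n),R_n^\times)=0$ lets it choose $\beta\in R_n^\times$, so that $A'=\beta A$ has entries in $R_n$ and the associated quaternion $w$ lies in $\widetilde{\M}_n$ with unit norm; for necessity, it applies the hypothesis to some $A$ with $\det A=\alpha$, extracts $\beta\in R_n^\times$ from entrywise integrality of $A'$, and then re-derives $\alpha=\overline{\beta}/\beta$ by comparing $\det A'$ with $\Norm_{\H_n/F_n}(w)$ and invoking total positivity. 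You instead use multiplicativity of $\ovn$ (true, and used implicitly throughout the paper, but never proved there) together with $\ovn(\PSUT(R_n))=\Gamma_{1,n}\subseteq\Gamma_{0,n}$ to reduce the containment to the diagonal coset representatives $A_\alpha$, and there the entire computation lives in the embedded commutative field $F_n(k)\cong K_n$: your lemma $K_n\cap\widetilde{\M}_n=R_n$ (correct, and in fact immediate because $1/2\in\uRn$ gives $\widetilde{\M}_n=\uRn\langle 1,i,j,k\rangle$ as an $\uRn$-module) converts $\ovn([A_\alpha])\in\Gamma_{0,n}$ into the clean criterion $\alpha\in\{\overline{\gamma}/\gamma:\gamma\in R_n^\times\}$, and your observation that this subgroup contains $(R_{n,1}^\times)^2$ makes both implications of the theorem fall out of that single criterion. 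What your route buys: a uniform treatment of the two directions, no norm-comparison or total-positivity step, and all integrality questions confined to one quadratic subfield. What the paper's route buys: it never needs $\ovn$ to be a group homomorphism, since it verifies the containment matrix by matrix directly from Definition~\ref{phi}.
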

\begin{proof}
Using the notation above, notice that $H^{1}(\Gal(K_n/F_n),
R_n^{\times})=0$ implies that $\beta\in R_n^{\times}$.  Hence we end up
with $\ovn([A])\in \Pro\!\widetilde{\M}_n^\times$.

Conversely, suppose
$\ovn(\PUT(R_n))\subset\Pro\!\widetilde{\M}_n^\times$.  Pick any
$\alpha\in R_n^\times$ with $\alpha\overline{\alpha}=1$.  Let
$A\in\UT(R_n)$ have $\det(A)=\alpha$.  Then
$\varphi_n([A])=[r-ui-tj-sk]\in\Pro\!\widetilde{\M}_n^\times$; set
$w:=r-ui-tj-sk\in\widetilde{\M}_n^\times$. Now let
  \begin{equation*}
    A'=\left(\begin{array}{rr}
        r+s\sqrt{-1} & t + u\sqrt{-1}\\
        -t+u\sqrt{-1} & r-s\sqrt{-1}
      \end{array}\right)\in \SUT(R_n).
  \end{equation*}
By  the construction of $\varphi_n$ we have $A'=\beta A$ for some
$\beta\in F_n^\times$ and by the above inclusion we must have
$\beta\in R_n^\times$.  Comparing determinants we get $\beta^2
\alpha=\det(A')=N(w)$.  Taking norms down to $\uRn$ gives
$(\beta\overline{\beta})^2=N(w)^2$.  Hence $\beta\overline{\beta}=N(w)$
since both quantities are totally positive.  Therefore, $\alpha =
N(w)/\beta^2=\overline{\beta}/\beta$ with $\beta\in R_n^\times$.  
Since $[K_n:F_n]  = 2$, this shows that $H^1(\Gal(K_n/F_n,R_n^\times) = 0$,
which implies the
  equivalent conditions of Proposition
\textup{\ref{spades}}.
\end{proof}
 We now ask when the map
$\ovn\colon \PUT(R_n)\rightarrow \Pro\!\widetilde{\M}_n^\times=\Gamma_0(\widetilde{\M}_n)$
is an isomorphism.
\begin{theorem}
  \label{rate}
   Suppose the equivalent conditions of Proposition
   \textup{\ref{spades}} are satisfied; retain the notation of
   Theorem~\textup{\ref{candy}}.  For $\alpha\in
   \uR_{n,+}^\times$, denote by $[\alpha]$ its class in
   $\,\uR^\times_{n,+}/(\uRn^{\times})^2$.
   \begin{enumerate}[\upshape (a)]
   \item
     \label{rate1}
  If $\fp$ does not split in $K_n$, then $\ovn$
  is an isomorphism onto $\Pro\!\widetilde{\M}_n^\times=\Gamma_0(\widetilde{\M}_n)$ if and only if the map below with a \textup{?} is an isomorphism:
\[
\Gamma_{0,n}/\Gamma_{1,n}=
    \langle [(1+\zeta_n)(1+\overline{\zeta}_n)]\rangle \simeq 
    \frac{\uR_{n, +}^\times}{(\uRn^\times)^2}\stackrel{?}{\simeq} \Z/2\Z  \simeq  
  \frac{\PUT(R_n)}{\PSUT(R_n)}\stackrel{\simeq}{\longrightarrow}
 \frac{R_{n,1}^\times}{(R_{n,1}^{\times})^{2}} . 
\]
\item
  \label{rate2}
 If $\fp$ splits in $K_n$, then $\ovn$
   is an isomorphism onto
   $\Pro\!\widetilde{\M}_n^\times=\Gamma_0(\widetilde{\M}_n)$ if and
   only if the map below with a \textup{?} is an isomorphism:
\[
\Gamma_{0,n}/\Gamma_{1,n}  \simeq 
     \frac{\uR_{n,+}^\times}{(\uRn^\times)^2}\stackrel{?}{\simeq} \Z/2\Z\times \Z/2\Z
    \simeq  
 \frac{\PUT(R_n)}{\PSUT(R_n)}\stackrel{\simeq}{\longrightarrow}
 \frac{R_{n,1}^\times}{(R_{n,1}^{\times})^{2}} . 
\]
       In this case note that
       \begin{equation*}
         \langle[(1+\zeta_n)(1+\overline{\zeta}_n)]\rangle\simeq\Z/2\Z  \subseteq 
         \frac{\uR_{n, +}^\times}{(\uRn^\times)^2}\simeq
         \Z/2\Z\times \Z/2\Z.   
         \end{equation*}
\end{enumerate}
\end{theorem}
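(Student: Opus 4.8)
The plan is to compare the two short exact sequences that present $\PSUT(R_n)$ inside $\PUT(R_n)$ and $\Gamma_{1,n}$ inside $\Gamma_{0,n}$, and to read off the isomorphism question from the induced map on cokernels. First I would record that $\ovn$ is an injective homomorphism $\PUT(R_n)\hookrightarrow\Pro\!\H_n^\times$: by \eqref{pasta} its kernel consists of classes $[A]$ for which $r-ui-tj-sk$ is a scalar quaternion, which forces $A$ to be scalar. By the remark after Definition \ref{phi} its restriction to $\PSUT(R_n)$ is the isomorphism $\overline{\Psi}_n\colon\PSUT(R_n)\xrightarrow{\simeq}\Gamma_{1,n}$ of \eqref{rounder1}, and under the standing hypothesis Theorem \ref{candy} places the image of $\ovn$ inside $\Gamma_{0,n}$. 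Thus $\ovn$ furnishes a commutative ladder whose top row is the bottom row of \eqref{reds} and whose bottom row is the reduced-norm sequence $1\to\Gamma_{1,n}\to\Gamma_{0,n}\xrightarrow{\Nm_0}\uR_{n,+}^\times/(\uRn^\times)^2\to1$ supplied by Proposition \ref{house} (with kernel $\Gamma_{1,n}=\ker\Nm_0$), the left vertical arrow being the isomorphism $\overline{\Psi}_n$.

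Since $\ovn$ is injective, restricts to an isomorphism on the subgroups, and satisfies $\Gamma_{1,n}\subseteq\ovn(\PUT(R_n))$, a routine chase shows that $\ovn$ is surjective onto $\Gamma_{0,n}$ exactly when the induced map of cokernels $\overline{\psi}\colon\PUT(R_n)/\PSUT(R_n)\to\Gamma_{0,n}/\Gamma_{1,n}$ is surjective; the same injectivity makes $\overline{\psi}$ injective, so $\ovn$ is an isomorphism onto $\Gamma_{0,n}$ if and only if $\overline{\psi}$ is an isomorphism. I would then insert the canonical identifications $\PUT(R_n)/\PSUT(R_n)\cong R_{n,1}^\times/(R_{n,1}^\times)^2$ from \eqref{reds} (Proposition \ref{proj}\eqref{aa}) and $\Gamma_{0,n}/\Gamma_{1,n}\cong\uR_{n,+}^\times/(\uRn^\times)^2$ from Proposition \ref{house}, so that $\overline{\psi}$ becomes precisely the arrow marked ? in the displayed chains. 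Because $\overline{\psi}$ is always injective, its being an isomorphism is equivalent to an equality of orders, which by \eqref{ohosq1} means $\uR_{n,+}^\times/(\uRn^\times)^2\cong\Z/2\Z$ in case \eqref{rate1} (where $r(n)=r_+(n)=1$) and $\cong\Z/2\Z\times\Z/2\Z$ in case \eqref{rate2} (where $r(n)=2r_+(n)=2$).

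It remains to make $\overline{\psi}$ explicit on generators, which is the one substantive computation. In case \eqref{rate1} the group $R_{n,1}^\times/(R_{n,1}^\times)^2=\mu_n/\mu_n^2$ is generated by $[\zeta_n]=[\det T_n]$, so $\PUT(R_n)/\PSUT(R_n)$ is generated by the class of $[T_n]$. By \eqref{eq: H, Tn} we have $\ovn([T_n])=[t_n]$, and from the recipe of Definition \ref{phi} with $\beta=1+\zeta_n^{-1}$ the matrix $A'=\beta T_n$ has $\det A'=\beta\overline{\beta}=(1+\zeta_n)(1+\overline{\zeta}_n)$; since the map of Proposition \ref{suquat} carries $\det$ to the reduced norm, $\Nm_0([t_n])=[(1+\zeta_n)(1+\overline{\zeta}_n)]$. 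Hence the image of $\overline{\psi}$ is $\langle[(1+\zeta_n)(1+\overline{\zeta}_n)]\rangle$, which accounts for the left-hand equality and shows that $\ovn$ is an isomorphism iff this cyclic subgroup fills $\uR_{n,+}^\times/(\uRn^\times)^2\cong\Z/2\Z$. In case \eqref{rate2} the split prime contributes a second generator of $R_{n,1}^\times/(R_{n,1}^\times)^2$, namely the class of the norm-one unit spanning the free part of \eqref{onehalfone}; running the same recipe through the unit $(1+\beta)\delta\in R_n^\times$ produced by Proposition \ref{spades}\eqref{rose2} identifies its image, so that $[(1+\zeta_n)(1+\overline{\zeta}_n)]$ gives the indicated copy of $\Z/2\Z$ inside $\uR_{n,+}^\times/(\uRn^\times)^2\cong\Z/2\Z\times\Z/2\Z$ and $\ovn$ is an isomorphism iff $\overline{\psi}$ hits the full group.

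The part I expect to require the most care is not the diagram chase, which is formal, but verifying that the square relating $\det$, $\ovn$, and $\Nm_0$ commutes once the cokernels are identified with unit groups --- concretely, that for $[A]\in\PUT(R_n)$ with $\det A=\overline{\beta}/\beta$ one has $\Nm_0(\ovn([A]))=[\beta\overline{\beta}]$ --- and, in the split case \eqref{rate2}, matching the second generator of $R_{n,1}^\times/(R_{n,1}^\times)^2$ with its class in $\uR_{n,+}^\times/(\uRn^\times)^2$ via the unit of Proposition \ref{spades}\eqref{rose2}. Both statements unwind from Definition \ref{phi}, where $A'=\beta A$ has determinant $\beta\overline{\beta}$ so that the associated quaternion has reduced norm $\beta\overline{\beta}$; carrying this identity coherently through the ladder, and in the split case keeping track of which generator of $\uR_{n,+}^\times/(\uRn^\times)^2$ is hit, is the crux of the argument.
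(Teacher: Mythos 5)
Your proof is correct and takes essentially the same route as the paper's: the paper's own proof is precisely the commutative ladder comparing $1\to\PSUT(R_n)\to\PUT(R_n)\to R_{n,1}^\times/(R_{n,1}^{\times})^2\to 1$ with the reduced-norm sequence $1\to\Gamma_{1,n}\to\Gamma_{0,n}\to\uR_{n,+}^{\times}/(\uRn^{\times})^2\to 1$, concluding that $\ovn$ is an isomorphism onto $\Gamma_{0,n}$ if and only if the induced map on the right-hand quotients, $[\zeta_n]\mapsto[(1+\zeta_n)(1+\overline{\zeta}_n)]$, is an isomorphism. Your write-up simply makes explicit what the paper leaves implicit (injectivity of $\ovn$, the chase giving the cokernel criterion, and the computation of the generator's image via $\beta=1+\zeta_n^{-1}$), so it is the same argument in fuller detail.
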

\begin{proof}
   From the diagram (which omits $1$'s on the right/left and top/bottom)
   with Norm map $\Nm_{0,n}\colon \Gamma_{0,n}\rightarrow \uR_{n,+}^\times$
   
{\small
  $$\begin{CD}
    \PSUT(R_n) & @>{\simeq}>> & \Gamma_{1,n} &
    @>>> & 1\\
  @VVV &  & @VVV & & @VVV  \\
  \PUT(R_n) & @>{\phi_n}>> & \Gamma_{0,n}& @> {\Nm_{0,n}}>> &
 $coker$\\
  @VVV & & @VVV & & @VVV  \\
  \PUT(R_n)/\PSUT(R_n)\simeq R_{n,1}^\times/
  (R_{n,1}^{\times})^2 & @>>> & \Gamma_{0,n}/\Gamma_{1,n}\simeq
  \uR_{n,+}^{\times}/(\uRn^{\times})^2& @>{\Nm_{0,n}}>>
  & $coker$ \, ,
  \end{CD}$$}

  \noindent we see that $\ovn$ is an isomorphism if and only if
  \begin{equation}
    \label{ream}
    \frac{R_{n,1}^{\times}}{(R_{n,1}^{\times})^2} \stackrel{\simeq}
         {\longrightarrow} \frac{\uR_{n, +}^\times}{(\uRn^{\times})^2}
  \end{equation}
  is an isomorphism.
  The map in (\ref{ream}) is induced by $[\zeta_n]\mapsto
  [(1+\zeta_n)(1+\overline{\zeta}_n)]$.
\end{proof}

\begin{remark1}
Note in particular that by Weber's theorem \cite{We} the hypotheses of
Theorem \ref{rate} are always satisfied when $n$ is a power of $2$.
\end{remark1}

We now look at the group $\PU_2^\zeta(R_n)$ defined in the introduction.

\begin{theorem}
  Assume that $n$ is not twice a prime power and use the notation 
 of Definition \textup{\ref{carrot}}. Then
  \begin{equation*}
     \ovn(\PU_2^\zeta(R_n))\subset 
    \Gamma_{+, n}(\widetilde{\M}_n ):=\Gamma_{+,n}.
  \end{equation*}
\end{theorem}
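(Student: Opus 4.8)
The plan is to unwind the definition of $\ovn$ (Definition \ref{phi}) and to use that on $\PUTz(R_n)$ the determinant is a root of unity, together with the arithmetic fact that the hypothesis on $n$ makes $1+\zeta_n$ a unit of $\OO_n$ and not merely of $R_n$. So first I would fix $A\in\UT(R_n)$ representing a class in $\PUTz(R_n)$, so that $\alpha:=\det A=\zeta_n^m$ for some integer $m$. Recall that in Definition \ref{phi} one may use \emph{any} $\beta\in K_n^\times$ with $\alpha=\overline{\beta}/\beta$; the resulting class $\ovn([A])\in\Pp\!\H_n^\times$ does not depend on this choice. Moreover, once $\ovn([A])$ is known to lie in $\Gamma_{0,n}$, the quantity $\val_\fp(\Norm_{\H_n/F_n}(\,\cdot\,))\bmod 2$ is a well-defined function on $\Gamma_{0,n}$ — it is exactly the condition cutting out $\Gamma_{+,n}$ — since rescaling a representative by $F_n^\times$ changes its reduced norm by a square. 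Hence I am free to pick a convenient $\beta$.

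The crucial input is that, given $s\ge 2$, ``$n$ is not twice a prime power'' is equivalent to $n$ not being a power of $2$, and by the standard evaluation of cyclotomic norms this is in turn equivalent to $\Norm_{K_n/\Q}(1+\zeta_n)=1$; thus $1+\zeta_n\in\OO_n^\times$. Since $\overline{1+\zeta_n}/(1+\zeta_n)=\zeta_n^{-1}$, I would take $\beta=(1+\zeta_n^{-1})^m$, a unit of $\OO_n$ satisfying $\overline{\beta}/\beta=\zeta_n^m=\alpha$. Then $A':=\beta A$ has entries in $R_n$ and determinant $\det A'=\beta\overline{\beta}=\bigl((1+\zeta_n)(1+\zeta_n^{-1})\bigr)^m=\bigl(\Norm_{K_n/F_n}(1+\zeta_n)\bigr)^m$, which is a totally positive unit of $\uOOn$.

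Next I would run $A'$ through the isomorphism of Proposition \ref{suquat}: it produces $w=r-ui-tj-sk$ with $r,s,t,u\in\uRn$, so $w\in\widetilde{\M}_n$, and its reduced norm is $\Norm_{\H_n/F_n}(w)=\det A'\in\uOOn^\times\subseteq\uRn^\times$. In particular $w\in\widetilde{\M}_n^\times$, so $\ovn([A])=[w]\in\Gamma_{0,n}$ — note this already sidesteps Theorem \ref{candy}, since for $\PUTz$ we never needed the $H^1$-vanishing of Proposition \ref{spades}. Finally, because $\Norm_{\H_n/F_n}(w)$ is a unit of $\uOOn$ and $\fp$ is an honest prime ideal of $\uOOn$, we have $\val_\fp(\Norm_{\H_n/F_n}(w))=0$, which is even; hence $w\in\widetilde{\M}_{n,+}^\times$ and $\ovn([A])\in\Gamma_{+,n}$, as required.

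The main (indeed essentially the only) obstacle is the arithmetic step asserting that the hypothesis forces $1+\zeta_n$ to be a \emph{global} unit of norm $1$: this is what pins the $\fp$-valuation of the relevant norm to $0$, and the whole theorem is organized around it. I would stress that the weaker statement $1+\zeta_n\in R_n^\times$ used in Proposition \ref{spades} (from \cite[Lemma 3.9]{IJKLZ2}) does \emph{not} suffice, because $R_n^\times$ can contain elements of odd $\fp$-valuation once $2$ is inverted; it is precisely the exclusion of twice-prime-powers, equivalently $\Norm_{K_n/\Q}(1+\zeta_n)=1$, that does the work. The remaining steps are routine verifications following Definition \ref{phi} and Proposition \ref{suquat}.
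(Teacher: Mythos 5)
Your proposal is correct and takes essentially the same approach as the paper: both rest on the fact that the hypothesis makes $1+\zeta_n$ a \emph{global} unit of $\OO_n$ (via $\Norm_{K_n/\Q}(1+\zeta_n)=1$, cited in the paper from \cite[Lemma 3.9]{IJKLZ2}), choose $\beta$ to be a power of it so that $A'=\beta A$ has determinant $\beta\overline{\beta}\in\uOOn^\times$ totally positive, and conclude that the quaternionic image has reduced norm a unit of $\uOOn$, hence of even $\fp$-valuation, so the class lands in $\Gamma_{+,n}$. The paper's proof is merely terser (it takes $\beta=(\zeta_n+1)^{-k}$ rather than your $(1+\zeta_n^{-1})^m$ and leaves the well-definedness and valuation checks implicit), while you spell out exactly why the weaker statement $1+\zeta_n\in R_n^\times$ would not suffice.
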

\begin{proof}
  Let $A\in\UTz$ and $\zeta_n^k=\det(A)$.
  Since $n$ is not twice a prime power,
  $\Nm_{K_n/\Q}(\zeta_n+1)=1$; see \cite[Lemma 3.9]{IJKLZ2}.  So in
  Definition \ref{phi} we can
  take $\beta=(\zeta_n+1)^{-k}$ since
  that $\zeta_n^k = \overline{\beta}/\beta$.  Then with $A'=\beta A$ we
  have $\det A'=\beta^2\zeta_n^k = \beta\overline{\beta}\in
  \OO_{F_n}^\times$. Hence $A'$ is of the form
  \begin{equation*}
    A'=\left[\begin{array}{rr}
        r+s\sqrt{-1} & t + u\sqrt{-1}\\
        -t+u\sqrt{-1} & r-s\sqrt{-1}
      \end{array}\right]
  \end{equation*}
  and 
  \begin{equation*}
    \ovn([A])=[r-ui-tj-sk]\in\Gamma_{+}(\widetilde{\M}_{n}).
  \end{equation*}
\end{proof}

By Proposition \ref{proj}\eqref{cc}, we always have
$[\PU_2^\zeta(R_n):\PSUT(R_n)]=2$, and thus $\ovn|_{\PUTz(R_n)}$
is an isomorphism precisely when
$[\Gamma_{+,n}:\Gamma_{1,n}]=2$.  The condition for this is described in Proposition
\ref{house}\eqref{house4}.
Also notice that when $n$ is a power of $2$, we have
$\PU_2^\zeta(R_n)=\PUT(R_n)$ from Proposition \ref{hearts}.

\subsection
    [The tree for $\SLT(F_{n,\p})$
via maximal orders in $\H_n$]
    {\texorpdfstring{The tree for 
\protect{\boldmath{$\SLT(F_{n, \p})$}}
via maximal orders in \protect{\boldmath{$\H_n$}}}{The tree
via maximal orders in the Hamilton quaternions}}
\label{trees}

Throughout this subsection and the next we assume
Hypothesis \ref{assume}.

Fix $n$ and let $\Delta = \Delta_{\mathfrak{p}_n}$ be the
Bruhat-Tits tree 
for $\SL_2(F_{n, \p})$ with $F_{n, \p}$ the completion of
  $F_n$ at the prime $\p = \p_n$. Generalizing the
  discussion in 
  \cite[Sect.~4]{K}, we may describe $\Delta$ in the following manner. Let
  $\M_n$ be the maximal $\uOO$-order \eqref{eq: Mn} of $\H = \H_n$.
The vertices $\Verts(\Delta)$ are identified with 
  the maximal $\uOO$-orders $\M$
  for which $\M_v := \M\otimes_{\uOO} \uOO_{v} = 
  \M_{n, v} := \M_n \otimes_{\uOO} \uOO_{v}$ for every place
  $v \ne \p$ of $\uOO_n$.
  In this section we will identify the vertex $\bv\in\Ver(\Delta)$
  with its corresponding
  maximal order $\M_{\bv}$.
The edges originating from a vertex $\M \in \Verts(\Delta)$
correspond to left $\M$-ideals of norm $\p$. The
edge corresponding to an ideal $I$ terminates at the right order of $I$. There
are $\Norm(\p) + 1$ edges originating from each
vertex. 

Let $\be$ be an edge originating at a vertex $\M$ and terminating at $\M'$,
corresponding to the ideal $I$.
The opposite edge $\bar \be$ then corresponds to the left
$\M'$-ideal $\bar{I} = \fp I^{-1}$ where $I^{-1} = \{ \alpha
\in \H: I\alpha \subset \M\}$ and we have $\Norm(\bar{I})=\fp$.
Clearly $\M', \M$ are the left and right
orders of $\bar I$ and $\bar{\bar I} = I$.

The  undirected edge between $\M$ and
$\M'$ is identified with the Eichler order
$\mathcal{E} = \M
\cap \M'$ of level $\p$. The connection between the pair of directed
edges $I, \bar{I}$ and $\mathcal{E}$ is that $\mathcal{E} = I +
\bar{I}$ while $I$, respectively  $\bar{I}$, is the unique maximal left
$\M$-, respectively $\M'$-, ideal in
$\mathcal{E}$.  To
see this, conjugate and choose a basis locally at $\p$ so that
$$ \mathcal{E}_{\p} = \begin{bmatrix} \uOO_{\p} &
 \uOO_{\p}\\
\fp\uOO_{\p} &
\uOO_{\p}\end{bmatrix},$$
where $\uOO_{\p}$ is the ring of integers in
$F_{n_\p}$.  (Recall that $\H$ is unramified at $\p$, so
$\H \otimes_{F_n} F_{n,\p} \cong
\Mattt(F_{n,\p})$.)  Switching labels if necessary we see that  
$$ \M_{\p} = \begin{bmatrix} \uOO_{\p} &
  \uOO_{\p}\\
\uOO_{\p} & \uOO_{\p}\end{bmatrix}\,
,\quad \M_{\p}' = \begin{bmatrix} \uOO_{\p} &
  \fp^{-1}\uOO_{\p}\\
\fp\uOO_{\p} &
\uOO_{\p}\end{bmatrix},$$
are the two maximal orders containing $\mathcal{E}_{\p}$.

Simple calculations show that
$$ I_{\p} = \begin{bmatrix} \fp\uOO_{\p} &
 \uOO_{\p}\\
\fp\uOO_{\p} &
\uOO_{\p}\end{bmatrix}, \quad\bar{I}_{\p} = \begin{bmatrix} \uOO_{\p} &
 \uOO_{\p}\\
\fp\uOO_{\p} &
\fp\uOO_{\p}\end{bmatrix}.$$
Note that $\mathcal{E}_{\p} = I_{\p} + \bar{I}_{\p}$ and that $\M_{\p}$, $\M_{\p}'$
are the left and right orders of $I_{\p}$, while the reverse
holds for $\bar{I}_{\p}$. Moreover, $\Norm(I_{\p}) =
\Norm(\bar{I}_\p) = \p$ and
$$ I_{\p}^{-1} = \begin{bmatrix} \fp^{-1}\uOO_{\p} &
 \fp^{-1}\uOO_{\p}\\
\uOO_{\p} &
\uOO_{\p}\end{bmatrix},$$
so that $\bar{I}_{\p} =\fp I_{\p}^{-1}$.

Each edge in the tree is given length $1$. As usual, the distance
$\dist(\bv,\bw)$ between two
vertices $\bv$, $\bw$ is the length of the shortest path between them.

\subsection[The Clifford-cyclotomic group in
  $\overline{\Gamma}_n\subseteq \Pp\!\H^\times$]
           {\texorpdfstring{The Clifford-cyclotomic group in
  \protect{\boldmath{$\overline{\Gamma}_n\subseteq
        \Pp\!\H^\times$}}}{The Clifford-cyclotomic group and the Hamilton
quaternions}}
\label{vet}

Let the vertex $\bv\in \Delta$ be such that
$\M_{\bv}=\M$.
In what follows we assume that the ideal $(2)$
in $F_n$ is the square of a principal ideal: $(2)=(\alpha:=\alpha_n)^2$.
Let $[h]$, $[t_n]$ be as in \eqref{eq: H, Tn}.
For $\bv_1,\bv_2\in\Ver(\Delta)$, let $P(\bv_1,\bv_2)$ be the path
in $\Delta$ between $\bv_1$ and $\bv_2$ of length
$\dist(\bv_1,\bv_2)$. 
\begin{prop}
  \label{droul}
Put $[t]:=[t_n]$ and 
  let $\bw$ be the midpoint of the path $P(\bv, [t]\bv)$.
  If $\dist(\bv, [t]\bv)$ is even,
  then $\bw\in\Ver(\Delta)$.  If $\dist(\bv, [t]\bv)$
  is odd, then $\bw$ is a vertex in the barycentric subdivision
  of $\Delta$.  Then $[t]$ fixes $\bw$.
\end{prop}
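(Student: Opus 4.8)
The plan is to realize $[t]$ as an elliptic isometry of $\Delta$ and to pin down $\bw$ as the fixed point of $[t]$ nearest to $\bv$.

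First I would record that $[t]=[t_n]=\ovn([T_n])$ has finite order in $\Pp\!\H_n^\times$. Indeed, $T_n=\left[\begin{smallmatrix}1&0\\0&\zeta_n\end{smallmatrix}\right]$ has order $n$ in $\PUT(R_n)$, and $\ovn$ is a homomorphism, so $[t]$ has finite order. A finite-order automorphism of a tree is elliptic, so its fixed-point set $\mathrm{Fix}([t])$ inside the geometric realization of $\Delta$, viewed as a metric tree, is a nonempty subtree by the fixed-point theorem for groups of finite order acting on trees \cite{S2}. The statement about where $\bw$ lies is then immediate from the definition of the midpoint of a geodesic: if $\ell:=\dist(\bv,[t]\bv)$ is even the midpoint of $P(\bv,[t]\bv)$ sits at integer distance $\ell/2$ from each endpoint and is therefore a vertex of $\Delta$, while if $\ell$ is odd the midpoint is the center of the middle edge, i.e.\ a vertex of the barycentric subdivision of $\Delta$. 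In the latter case $[t]$ inverts that middle edge.

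The core of the argument is to prove $[t]\bw=\bw$. Let $c$ be the point of $\mathrm{Fix}([t])$ closest to $\bv$; this is well-defined and unique because $\mathrm{Fix}([t])$ is convex and $\Delta$ is uniquely geodesic. Since $[t]$ is an isometry fixing $c$, we have $\dist([t]\bv,c)=\dist([t]\bv,[t]c)=\dist(\bv,c)$, so $c$ is equidistant from $\bv$ and $[t]\bv$, and $[t]$ carries the geodesic $P(\bv,c)$ isometrically onto $P([t]\bv,c)$. I claim that the concatenation $P(\bv,c)\cup P(c,[t]\bv)$ is again a geodesic; by uniqueness of geodesics in a tree it then equals $P(\bv,[t]\bv)$, and since $\dist(\bv,c)=\dist(c,[t]\bv)$ the point $c$ is its midpoint, forcing $c=\bw$ and hence $[t]\bw=[t]c=c=\bw$.

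The one delicate point, which I expect to be the main obstacle, is verifying that this concatenation does not backtrack at $c$. Suppose it did: then $P(\bv,c)$ and $P([t]\bv,c)$ would share a nondegenerate initial segment emanating from $c$, up to some point $c'\neq c$. Because $[t]$ maps $P(\bv,c)$ onto $P([t]\bv,c)$ while fixing $c$, the point of $P(\bv,c)$ at distance $\dist(c,c')$ from $c$ is carried to the point of $P([t]\bv,c)$ at the same distance from $c$; sharing the initial segment forces these to coincide, so $[t]c'=c'$, i.e.\ $c'\in\mathrm{Fix}([t])$. But $c'$ lies strictly between $\bv$ and $c$, giving $\dist(\bv,c')<\dist(\bv,c)$ and contradicting the minimality of $c$. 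Hence there is no backtracking, the concatenation is the geodesic $P(\bv,[t]\bv)$, its midpoint is $c$, and by uniqueness of midpoints $\bw=c$ is fixed by $[t]$.
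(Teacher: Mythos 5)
Your proposal is correct, and at its core it runs on the same mechanism as the paper's proof: both arguments locate $\bw$ by tree geometry relative to a point already known to be fixed by $[t]$, and both deduce that $[t]$ fixes $\bw$ because $[t]$ carries one of the two geodesic ``legs'' onto the other while preserving distances. The difference is in how that fixed reference point is obtained and used. The paper says ``let $w'$ be a vertex fixed by $[t]$'' without justification inside the proof (in context such a vertex is furnished concretely by the order $\T_n$ of Theorem \ref{thm: tn}, or by the inverted edge it determines when $n$ is a power of $2$), takes the geodesics from $w'$ to $\bv$ and to $[t]\bv$, and identifies $\bw$ with their branch point $x$; the fact that $[t]x=x$ is left implicit. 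You instead first prove existence abstractly --- $[t]$ has finite order, hence is elliptic, hence $\mathrm{Fix}([t])$ is a nonempty subtree by Serre's fixed-point theorem --- and then take $c$ to be the nearest-point projection of $\bv$ onto $\mathrm{Fix}([t])$, proving via the no-backtracking argument that $c$ is the midpoint. What your route buys is self-containedness (the existence of a fixed point is established, not assumed, and no appeal to the explicit quaternionic order is needed) and a cleaner justification of the final step, since the minimality of $c$ does the work that the paper leaves implicit; what the paper's route buys is brevity and a choice of fixed vertex ($\T_n$) that is reused immediately afterwards in Proposition \ref{deranged} to identify $\bw$ with the vertex of $\T_n$.
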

\begin{proof}
If $v = [t]v$, then the statement holds trivially.  If not, let $w'$
be a vertex fixed by $[t]$.  Then $\dist(w',v) = \dist(w',[t]v)$.  Let
$P_v$ and $P_{[t]v}$ be the shortest paths from $w'$ to $v$, $[t]v$
respectively, and let $x$ be the last vertex that is in both.  Then
the paths from $x$ to $v$, $[t]v$ obtained from $P_v$, $P_{[t]v}$ by
deleting the path from $w'$ to $x$ are the shortest paths from $x$ to
$v$, $[t]v$, and the reverse of the path from $x$ to $v$ followed by
the path from $x$ to $[t]v$ is the shortest path from $v$ to $[t]v$.
Since $\dist(x,v) = \dist(x,v')$, the claim follows.
\end{proof}

\begin{prop}
  \label{deranged}
  Assume that $n$ is not a power of $2$.
  Let $\bw''\in\Ver(\Delta)$ be such that $\M_{\bw''}=\T_n$ as in
  Theorem \textup{\ref{thm: tn}} and let $\bw$ be as in Proposition
  \textup{\ref{droul}}.
  Then $\bw''=\bw$.
\end{prop}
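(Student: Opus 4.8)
The plan is to show that $\bw$ and $\bw''$ are forced to coincide because each is the \emph{unique} common fixed vertex of $[t]$ and of the involution $[j]$. First I would record two stabilizer facts. Since $t_n,j\in\T_n^\times=\M_{\bw''}^\times$, both $[t]$ and $[j]$ fix $\bw''$. The less obvious point is that $[j]$ also fixes $\bv$: conjugation by $j$ carries the generators in \eqref{eq: Mn} back into $\M_n$. It fixes $1$ and $(1+j)\alpha/2$; it sends $(1+i)\alpha/2$ to $(1-i)\alpha/2=\alpha-(1+i)\alpha/2$, which lies in $\M_n$ as $\alpha\in\uOO_n\subseteq\M_n$; and it sends $(1+i+j+k)/2$ to $(1-i+j-k)/2=(1+i+j+k)/2-(i+k)$, which lies in $\M_n$ because $i+k=-h\in\M_n$ by Remark \ref{rem: h}. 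As conjugation by $j$ is an involution ($j^2=-1$ is central), $j\M_n j^{-1}=\M_n$, so $[j]\in\Stab(\bv)$.

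Next I would analyze the fixed-point set $\Phi:=\operatorname{Fix}([t])$ in $\Delta$. The element $t_n$ is regular semisimple with $F_n(t_n)\cong K_n$, and under $\H_n\otimes_{F_n}F_{n,\fp}=\Mattt(F_{n,\fp})$ it generates the torus attached to $K_n$, so $\Phi$ equals the fixed-point set of that torus. Because $\zeta_n-\zeta_n^{-1}$ is a unit at $2$ when $n$ is not a power of two (the relative different $\mathfrak{d}_{K_n/F_n}$ divides it), $\fp$ is unramified in $K_n$; hence either $\fp$ is inert, the torus is nonsplit, and $\Phi=\{\bw''\}$ is a single vertex, or $\fp$ splits and $\Phi$ is the apartment $A$ whose two ends are the primes $\wp,\overline{\wp}$ of $K_n$ above $\fp$. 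In the inert case we are done at once, since $\bw,\bw''\in\Phi=\{\bw''\}$.

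In the split case the extra symmetry is decisive. Conjugation by $j$ induces the nontrivial element of $\Gal(K_n/F_n)$ (recall $jt_nj^{-1}=t_n^{-1}$ up to scalars from Theorem \ref{thm: tn}), so it interchanges $\wp$ and $\overline{\wp}$ and thus swaps the two ends of $A$; that is, $[j]$ acts on the line $A$ as a reflection, whose unique fixed vertex must be $\bw''$ (as $[j]$ fixes $\bw''\in A$). Hence $\Phi\cap\operatorname{Fix}([j])=\{\bw''\}$ in both cases. Finally, by the proof of Proposition \ref{droul}, $\bw$ is the vertex of $\Phi$ nearest $\bv$, i.e. $\bw=\operatorname{proj}_\Phi(\bv)$. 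Since $[j]\bv=\bv$ and $[j]\Phi=\operatorname{Fix}([j][t][j]^{-1})=\operatorname{Fix}([t]^{-1})=\Phi$, the nearest-point projection onto the convex subtree $\Phi$ is $[j]$-equivariant, giving $[j]\bw=\bw$. Therefore $\bw\in\Phi\cap\operatorname{Fix}([j])=\{\bw''\}$, so $\bw=\bw''$.

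The main obstacle is the split case, and concretely the identification $\Phi\cap\operatorname{Fix}([j])=\{\bw''\}$: it rests on the local Bruhat--Tits description of the split torus (its apartment $A$) together with the fact that $[j]$ reflects $A$ about $\bw''$. The two inputs I would verify first, as routine, are that $[j]$ normalizes $\M_n$ (the generator computation above) and that $\fp$ is unramified in $K_n$; once $[j]$ is known to fix both $\bv$ and $\bw''$, equivariance of the projection does the rest uniformly.
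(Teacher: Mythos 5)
Your proof is correct, but it takes a genuinely different route from the paper's. The paper argues purely with orders: since $\Norm_{\H_n/F_n}(t_n)$ is a unit, $[t]$ fixing $\bw$ gives $t_n\in\M_{\bw}^\times$ and hence $e^{2\pi k/n}=t_n-1\in\M_{\bw}$; since $j$ lies in both $\M_{\bv}=\M$ and $\M_{\bw''}=\T_n$, it lies in every maximal order along the path $P(\bw'',\bv)$, and rerunning the argument of Proposition \ref{droul} with $\bw''$ in place of $\bw'$ puts $\bw$ on $P(\bw'',\bv)$; therefore $\M_{\bw}$ contains the order generated by $t_n$ and $j$, which by Theorem \ref{thm: tn} is the \emph{maximal} order $\T_n$, forcing $\M_{\bw}=\T_n=\M_{\bw''}$. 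You instead compute the entire fixed-point set $\Phi=\operatorname{Fix}([t])$ (a single vertex in the inert case, an apartment in the split case), show that $[j]$ reflects the apartment about $\bw''$, and finish by $[j]$-equivariance of the nearest-point projection of $\bv$ onto $\Phi$. Both arguments are sound; the paper's is shorter, needs no inert/split dichotomy, and recycles the maximality of $\T_n$ already established, while yours extracts finer geometric information (the exact shape of $\operatorname{Fix}([t])$ and the reflection structure) and the equivariant-projection trick is robust and portable. The one step you assert rather than prove is that $\Phi$ equals the fixed-point set of the torus: a priori $\operatorname{Fix}([t])$ only \emph{contains} it, and if the order $\uOOn[t_n]$ were non-maximal at $\fp$, say of conductor $\fp^c$, then $\Phi$ would be the $c$-neighborhood of the apartment (resp.\ a ball of radius $c$), and $\Phi\cap\operatorname{Fix}([j])$ could contain vertices besides $\bw''$, breaking your last step. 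Here maximality does hold: $\uOOn[t_n]\cong\uOOn[\zeta_n]=\Z[\zeta_n]=\OO_n$ has relative discriminant generated by $(\zeta_n-\zeta_n^{-1})^2$, which is a unit at $2$ precisely because $n$ is not a power of $2$ --- this is exactly the different computation you sketched, so the gap is real but easily filled by your own preliminary verification.
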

\begin{proof}
  By Proposition \ref{droul}, $[t]\in \overline{\Gamma}_{n}$
  fixes $\bw$.
  But $\Norm_{\H_n/F_n}(t_n)$ is
  a unit by \cite[Lemma 3.9]{IJKLZ2},
  so
  $t_n\in\M_{\bw}^\times$.  Hence $e^{2\pi k/n}=t_n-1\in\M_{\bw}$.  We
  also know $j$ is in both maximal orders $\M_{\bw''}=\T_n$ and
  $\M_{\bv}=\M$.  Therefore $j$ is in each maximal order corresponding
  to vertices in the path $P(\bw'',\bv)$.  In particular, applying the
  same argument as in the proof of Proposition \ref{droul} with $\bw''$
  replacing $\bw'$, we get that $\bw$ is on $P(\bw'',\bv)$.  Therefore
  both $j$ and $e^{2\pi k/n}$ are in $\M_\bw$ and hence
  $\M_\bw=\M_{\bw''}$.
\end{proof}
\begin{prop}  
  \label{pointer}
  We have
  \begin{enumerate}[\upshape (a)]
  \item
    \label{pointer1}
    $ (t^{n/2}h)^2=u\cdot j$ for $u\in\uOOn^\times$.
    Hence
    \[\overline{\Gamma}_{n,\bw}\cong D_n\subseteq \langle [h], [t]\rangle
    \cong \Pp\!\Gg_n .
    \]
  \item
    \label{pointer2}
    Conjugation by $(1+k)/\sqrt{2} \in \Pp\!\M^\times\cong \Sc_4$ is a $4$-cycle 
    and conjugation by $(-i-k)/\sqrt{2}$ is a transposition such that the
    product has order $3$, and
    \[
    \overline{\Gamma}_{n,\bv}\cong \Sc_4 \subseteq \langle [h],[t]\rangle
    \cong\Pp\!\Gg_n.
    \]
    \end{enumerate}
  \end{prop}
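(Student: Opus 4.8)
The plan is to prove both parts by exhibiting explicit quaternion units that realize the vertex stabilizers inside $\langle[h],[t]\rangle=\ovn(\Pp\!\Gg_n)$, reducing everything to a few identities in $\H_n$. The isomorphism type $D_n$ of $\overline{\Gamma}_{n,\bw}$ is already available from Theorem~\ref{thm: tn}\eqref{thm: tn1} (recall $\M_{\bw}=\T_n$ by Proposition~\ref{deranged}), while the isomorphism type $\Sc_4$ of $\overline{\Gamma}_{n,\bv}=\Pp\!\M^\times$ will be produced along the way; once the relevant generators are shown to lie in $\langle[h],[t]\rangle$, the containments follow formally.

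For part~\eqref{pointer1} I would first compute $t^{n/2}$. Writing $e^{2\pi k/n}=\cos(2\pi/n)+k\sin(2\pi/n)$ and applying the half-angle identities gives $t_n=1+e^{2\pi k/n}=2\cos(\pi/n)\,e^{k\pi/n}$, so that
\[
t_n^{n/2}=\bigl(2\cos(\pi/n)\bigr)^{n/2}e^{k\pi/2}=\lambda\,k,\qquad
\lambda:=\bigl(2+\zeta_n+\overline{\zeta}_n\bigr)^{n/4}=\Nm_{K_n/F_n}(1+\zeta_n)^{n/4}.
\]
Since $4\mid n$ and $n$ is not a power of $2$, $n$ is not twice a prime power, so $1+\zeta_n\in R_n^\times$ by \cite[Lemma~3.9]{IJKLZ2} and hence $\lambda\in\uOOn^\times$. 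Then, using $h=-i-k$ and $ki=j$,
\[
(t^{n/2}h)^2=\bigl(\lambda\,k(-i-k)\bigr)^2=\lambda^2(1-j)^2=-2\lambda^2\,j=:u\cdot j,
\]
which is the claimed identity (the factor $2$ is a unit in $\uRn$, and in any event $[u\,j]=[j]$). Therefore $[j]=[t^{n/2}h]^2\in\langle[h],[t]\rangle$. As $\T_n=\uOOn\langle 1,t_n,j,t_nj\rangle$, Theorem~\ref{thm: tn}\eqref{thm: tn1} gives $\overline{\Gamma}_{n,\bw}\cong D_n$ generated by $[t_n]$ and $[j]$; since $[t_n]=[t]$ and $[j]$ both lie in $\langle[h],[t]\rangle$, the stabilizer $D_n$ is contained in $\Pp\!\Gg_n$.

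For part~\eqref{pointer2} I would set $g_1=(1+k)/\sqrt2=(1+k)\alpha/2$ and $g_2=(-i-k)/\sqrt2=h\alpha/2$, both of which are norm-$1$ units of $\M$, hence define classes in $\overline{\Gamma}_{n,\bv}=\Pp\!\M^\times$. The conjugation actions on pure quaternions are then recorded directly: $g_1$ fixes $k$ and sends $i\mapsto j\mapsto -i$, so $[g_1]$ has order $4$ (rotation by $\pi/2$ about the $k$-axis, a $4$-cycle on the four diagonals); $g_2^2=-1$, so $[g_2]$ has order $2$; and $g_1g_2=(1-i-j-k)/2$ satisfies $x^2-x+1=0$, hence has order $6$ in $\M^\times$ and order $3$ modulo the center. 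A subgroup of $\Sc_4$ containing an element of order $4$ cannot lie in $A_4$, and the presence of an element of order $3$ then forces it to be all of $\Sc_4$, so $\langle[g_1],[g_2]\rangle\cong\Sc_4$; that this exhausts $\overline{\Gamma}_{n,\bv}$ follows by bounding the finite subgroup $\Pp\!\M^\times\subset\PGLT(\BC)$ above, ruling out the icosahedral case by the degree-of-adjoined-element argument of Theorem~\ref{thm: tn}. Finally $[g_2]=[h]$, and since $T_n^{n/4}=T_4$ one computes $t_n^{n/4}=\bigl(2\cos(\pi/n)\bigr)^{n/4}(1+k)/\sqrt2$, whence $[g_1]=[1+k]=\ovn([T_n^{n/4}])=[t^{n/4}]$; thus both generators, and hence $\Sc_4$, lie in $\langle[h],[t]\rangle\cong\Pp\!\Gg_n$.

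The quaternion multiplications are routine; the genuine content, and the main thing to get right, is twofold. First is the cycle-type bookkeeping in~\eqref{pointer2}: one must verify the orders $4,2,3$ of $[g_1],[g_2],[g_1g_2]$ and argue that they generate all of $\Sc_4$ rather than a proper subgroup such as $D_4$---this is exactly the role of the ``product has order $3$'' clause, since a $4$-cycle together with a transposition whose product has order $2$ would generate only a dihedral group. Second is the scalar bookkeeping that transfers the identities into the projective group: I must check that $\lambda$ and $\bigl(2\cos(\pi/n)\bigr)^{n/4}/\sqrt2$ are scalars in $F_n$ in the appropriate ring, so that $[j]=[t^{n/2}h]^2$ and $[1+k]=[t^{n/4}]$ genuinely hold in $\Pp\!\H_n^\times$; the apparent $\sqrt2$ is harmless because $(1+k)\alpha/2$ is a genuine unit of $\M$ and $\alpha/2\in F_n$.
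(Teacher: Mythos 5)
Your part (b) is essentially the paper's own argument: the same two elements $(1+k)/\sqrt{2}$ and $(-i-k)/\sqrt{2}$, the same order computations ($4$, $2$, and $3$ for the product), and the same identifications $[t^{n/4}]=[(1+k)/\sqrt{2}]$ and $[h]=[(-i-k)/\sqrt{2}]$. The only differences are cosmetic: the paper pins down the transposition class by counting the six conjugations by $(\pm r\pm s)$, where you instead compute the rotation action explicitly, and the paper invokes the criterion that an order-$4$ and an order-$2$ element generate $S_4$ exactly when their product has order $3$, where you use divisibility of the subgroup order by $12$ together with oddness of the $4$-cycle; both are fine. For part (a) the paper gives no computation at all---it says the statement ``is already contained in the proof of Theorem \ref{thm: tn}''---so your half-angle computation $t_n^{n/2}=\lambda k$ with $\lambda=(2+\zeta_n+\overline{\zeta}_n)^{n/4}$, followed by $(t^{n/2}h)^2=-2\lambda^2 j$, is a genuine addition, and it correctly reduces the containment $D_n=\langle[t],[j]\rangle\subseteq\langle[h],[t]\rangle$ to the projective identity $[j]=[t^{n/2}h]^2$.

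Two caveats. First, your scalar is $u=-2\lambda^2$, a unit of $\uRn$ but \emph{not} of $\uOOn$ (the factor $2$); you notice this and it is harmless for the conclusions, which live in $\Pp\!\H_n^\times$, but you should state plainly that what you prove is the identity with $u\in\uRn^\times$ (this is really an imprecision in the normalization of $h$ in the statement, since the norm-one representative $(-i-k)/\sqrt{2}$ of $[h]$ would remove the $2$). Second, and more substantively, your part (a) silently assumes $n$ is not a power of $2$: Proposition \ref{deranged} and Theorem \ref{thm: tn}\eqref{thm: tn1} apply only to such $n$, and for $n=2^s$ the element $1+\zeta_n$ has norm $2$ to $\Q$, so $\lambda\notin\uOOn^\times$ and $\M_{\bw}$ is not the maximal order $\T_n$. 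The paper applies Proposition \ref{pointer} precisely in the power-of-$2$ examples (it is cited at $n=16$, and used implicitly at $n=8,32$), so this case cannot be discarded. The patch is short: for $n=2^s$ the midpoint $\bw$ of Proposition \ref{droul} is the elided barycentric vertex, its stabilizer is $D_n$ by the last sentence of Theorem \ref{thm: tn}\eqref{thm: tn2}, and the identity $[j]=[t^{n/2}h]^2$ holds regardless of whether the scalar is integral, so the containment still follows---but as written your argument does not cover these $n$.
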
 
  \begin{proof}
    Statement (a) is already contained in the proof of Theorem \ref{thm: tn}.

    As for (b), the identification with $\Sc_4$ is a standard fact
    for the Hamilton quaternions over $\Q$.  In particular, 
    the conjugations by $(\pm r \pm s)$,
    where $r \ne s$ and $\{r,s\} \subset \{i,j,k\}$, are of order $2$, since
    $(\pm r \pm s)^2 = -2$ is central.
    There are $6$ of them, so these form a conjugacy
    class, which must be that of the transpositions.  
    On the other hand, the conjugations by $1 \pm q$, for 
    $q \in \{i,j,k\}$, are of order $4$, since $(1 \pm q)^4$ is a scalar but
    $(1 \pm q)^2$ is not.  It is easily checked that the product
    of one of each type is of order $2$ if and only if $\{r,s,t\} = \{i,j,k\}$.
    Since one element of order $4$ and one of order $2$ generate $\Sc_4$ if
    and only if their product has order $3$, it follows that conjugation by
    $(1+k)/\sqrt{2}$ and $(-i-k)/\sqrt{2}$ generate
    $\Sc_4$.  Since $t^{n/4} = (1+k)/\sqrt{2}$ up to scalars, conjugation by
    $t^{n/4}$ is the same as conjugation by $(1+k)/\sqrt{2}$, while conjugation 
    by $h = -i-k$ is the same as conjugation by $(-i-k)/\sqrt{2}$, so the claim
    that $\overline{\Gamma}_{n,\bv}\cong \Sc_4 \subseteq \langle [h],[t]\rangle$
    follows as well.
  \end{proof}



\section{Introduction to the examples}
\label{part: examples}

Using Theorem~\ref{theorem: reconstruct} and the description of the
Bruhat-Tits tree $\Delta = \Delta_n$ in terms of maximal orders
in $\H_n$ in \cite[Section 4]{K},
we compute $\PSUT(R_n), \PUTz(R_n)$, and
$\PUT(R_n)$ as the
fundamental group of explicit graphs of groups for $n =
8,12,16,20, 24,28,32, 36, 40,48$ arising from the actions
of $\Gamma_1, \Gamma_+$, and $\Gamma_0$ (respectively) on $\Delta$. 
Our computations, which are done using
Magma~\cite{BCP},  do not give
enough information to present $\PSUT(R_{60})$ or $\PUTz(R_{60})$ in this manner,
although they do for $\PUT(R_{60})$. Our attempted computations for
$n = 52,56$ did not finish in the small amount of time we allotted and we deemed all $n
> 60$ too costly to try. These computations give a presentation of these groups as amalgamated
products.

All computations are done using Magma's quaternion algebra machinery
in $\H_n$  by exploring the
neighborhood around the standard maximal order $\M_n$ (Section \ref{sec:Hamilton quaternions}). This order has
$\M_n^*/\left(\mathcal{O}_n^+\right)^* \simeq S_4$, 
thus rooting our graphs at a vertex common (in all the computed examples) to $\PGg_n$. This is
important for $n =28$ and $60$. These are the two cases where there
exist some types of maximal orders that are not connected to
$\M_n$. In our graphs, the vertex corresponding to $\M_n$ is
outlined in black. It has stabilizer subgroup $S_4$.

In all our examples $\T_n$ (see statement of Theorem \ref{thm: tn})  represents a vertex in the h-graph of groups for $\PUT(R_n)$ with
stabilizer subgroup isomorphic to $D_n$. There is also a path
(without backtracking) from the node represented by $\M_n$ to that
represented by $\T_n$ whose fundamental group $G$ telescopes to $G
\simeq S_4*_{D_4}D_n$.

Let $h$ be as in (\ref{rem: h}) and  $t_n$ be as in the
statement of Theorem \ref{thm: tn}. These generate $\PGg_n$. Since $h \in
\M_n$ and $t_n \in \T_n$, $\PGg_n$ embeds into $G$. This latter group is isomorphic to $\PGg_n(R_n)$ by
Theorem \ref{bird}. Theorem~\cite[Theorem 1.2(1)]{IJKLZ} tells us that these two groups
must be equal for $n =  8,12, 16, 24$. We do not know if this holds
for the remaining examples $n = 20, 28, 32, 36,40, 48, 60$. In any,
case, using Proposition \ref{prop:inf-index} we have the new result
$\PGg_n < G\ll [\PUTz(R_n)]_{f}$ for $n = 20,28,32$.

We can also compute the Euler-Poincar\'{e} characteristics from the
associated graph of groups. 
\begin{definition1}\label{defn: mass} Let $\Gr = (\Gamma, \gr)$ be an
  h-graph of groups with finite vertex and edge isotropy
  groups. Define the mass of a vertex $\bv \in \gr$ to be $\m(\bv)
  =1/\#\Gamma_{\bv}$. The mass of an edge $\be \in \Ed(\gr)$ is
  $m(\be) =1/(2\#\Gamma_{\be})$. 
  The {\it
    vertex mass} of $\Gr$ is
  $$\VM(\Gr) := \VM(\Gamma, \gr) := \sum_{\bv \in
    \Ver(\gr)}\m(\bv)\,.$$ Its {\it edge mass} is $$\EM(\Gr) :=
  \EM(\Gamma, \gr) := \sum_{\be \in \Ed(\gr)}\m(\be)\, .$$
\end{definition1}
\begin{remark1}\label{rem: edge mass}
  Recall that, in our graphs, edges $\be$ have opposite edges
  $\widebar{\be}$ so the mass attached to the geometric edge 
  $\{\be,\widebar{\be}\}$ with $\be \ne \widebar{\be}$ is $1/\#\Gamma_{\be} =
  1/\#\Gamma_{\widebar{\be}}$. Our h-graphs of groups are
  actually graphs of groups (being a quotient of the tree after
  subdividing inverted edges). In the pictures we draw we elide the
  inserted vertices to make clear the associated h-graph. These
  elided vertices still contribute to the vertex mass of the graph in the
  usual way. 
  \end{remark1}
By \cite[Theorem 2.20]{IJKLZ2}, the Euler-Poincar\'{e} characteristic of the
fundamental group $\Gr$ of a graph of groups is given by
\begin{equation}
  \label{chi}
\chi(\pi_1(\Gr)) = \VM(\Gr) - \EM(\Gr) .
\end{equation}
On the other hand, if we have a group presented as an amalgamated
product, it is easy to compute its Euler-Poincar\'{e} characteristic:
If $A$, $B$ are finite groups with $C\leq A$ and $C\leq B$, then
\begin{equation}
  \label{form}
  \chi(A\ast_{C}B)=\frac{1}{\# A}+\frac{1}{\#B}-\frac{1}{\#C}
\end{equation}
by \cite[Corollaire 1, p.~104]{S3}.

\subsection{Tables}
  Before treating examples in detail, we arrange in tabular form the
  results of some Magma computations. 
  We first summarize the notation used in the tables, using the
  notation of Section \ref{sec:cyc-rings-fields}.  All of our examples
  will satisfy the equivalent conditions of Proposition
  \ref{one/unramified} with one prime $\fp$ of $F=F_n$ lying
  above~$2$.  In addition, all but $n=28, 60$ satisfy the equivalent
  conditions Proposition \ref{punt1} having a unique prime above $2$
  in $K=K_n$. Let $q_n = 2^{f(\fp)}$ be the order of the residue field
  of $\fp$ and let $\Delta_n$ be the Bruhat-Tits tree associated to
  $\SLT(F_\fp)$. It is a regular tree of degree $q_n
  +1$.
  We compute with $\Delta_n$ using
  the maximal orders in $\H_n$ as described in \S\ref{trees}.
  
    The table below gives the results on $F_n$ we will need
    in analyzing our examples.
    \begin{table}[H]
      \label{fielddata}
      \caption{Data on the fields $F_n=\Q(\zeta_n)^+$}
    \begin{center}
    \begin{tabular}{r|c|c|c|c|c|c}
      $n$ & $\varphi(n)$ & $q_n +1$ & $\hnar(\OO_n)$
      & $\hnar(\OO^\fp_n)$ & $\#\frac{\Cln(\OO_n^\fp)[2]}
      {\Prin(\OO_n^\fp)}$ & $[\fp]\in \Cln(\OO_n)^2$?\\ \hline
      $8$ & $4$ &   $3$ &  $1$ & $1$ & $1$ & yes\\ \hline
      $12$ & $4$ & $3$  &  $2$ & $1$ & $1$ & no\\ \hline
      $16$ & $8$ &  $3$ &  $1$ & $1$ & $1$ & yes\\ \hline
      $20$ & $8$ & $5$ & $2$ & $1$ & $1$ & no \\ \hline
      $24$ & $8$ &  $3$ &  $2$ & $1$ & $1$ & no \\ \hline
      $28$ & $12$ & $9$ & $2$ & $2$ & $1$ & yes \\ \hline
      $32$ & $16$ & $3$  & $1$ & $1$ & $1$ & yes\\ \hline
      $36$ & $12$ &  $9$ & $2$ & $1$ & $1$ & no\\ \hline
      $40$ & $16$ & $5$ & $2$ & $1$ & $1$ & no \\ \hline
      $48$ & $16$ &  $3$ & $2$ & $1$ & $1$ & no \\ \hline
      $60$ & $16$ &  $17$ & $2$ & $2$ & $1$ & yes
    \end{tabular}
    \end{center}
    \end{table}

 The table below gives the information about $\H_n$ we will need
 in analyzing our examples.
 \begin{table}[H]
   \caption{Data on the quaternion algebras $\H_n=\H\otimes_{\Q}F_n$}
\begin{center}
\begin{tabular}{r|c|c|c}
      & $t(\H_n)  = h_{[\fp]\text{-rel}}(\H_n) $ && \\
      $n$ & $\quad = \hrel(\H_n) = h(\H_n)$ 
& $\#\frac{\Gamma_{+}(\M_n)}{\Gamma_{1}(\M_n)}$ &
      $\#\frac{\Gamma_{0}(\M_n)}{\Gamma_+(\M_n)}$ 
      \\ \hline
      $8$ & $1$ & $1$ & $2$ \\ \hline
      $12$ & $2$ & $2$ & $1$ \\ \hline
      $16$ & $2$ & $1$ & $2$ \\ \hline
      $20$ & $3$ & $2$ & $1$ \\ \hline
      $24$ & $3$ &  $2$ & $1$ \\ \hline
      $28$ & $5$ & $2$ & $2$ \\ \hline
      $32$ & $58$ & $1$ & $2$ \\ \hline
      $36$ & $6$ & $2$ & $1$ \\ \hline
      $40$ & $25$ & $2$ & $1$ \\ \hline
      $48$ & $39$ & $2$ & $1$ \\ \hline
      $60$ & $9$  & $2$ & $2$ 
\end{tabular}
\end{center}
 \end{table}

We now explain the equalities in the column headings of the table.
\begin{prop}\label{explain-table1}
For all $n$ in the range of the table we have
$t(\H_n) = h(\H_n) = \hrel(\H_n) = h_{[\fp]\textup{-rel}}(\H_n)$.
\end{prop}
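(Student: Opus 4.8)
The plan is to realize the four quantities as cardinalities of four double-coset spaces attached to a maximal order $\widetilde{\M}_n$ of the totally definite quaternion algebra $B=\H_n$, and to connect them by a chain of natural surjections
\[
h(\H_n)\twoheadrightarrow \hrel(\H_n)\twoheadrightarrow h_{[\fp]\text{-rel}}(\H_n)\twoheadrightarrow t(\H_n),
\]
each obtained by enlarging the group one quotients by on the right. In adelic terms the leftmost space is the set of left ideal classes $B^\times\backslash \widehat{B}^\times/\widehat{\widetilde{\M}}_n^\times$ and the rightmost is the type set $B^\times\backslash \widehat{B}^\times/\widehat{F}^\times\widehat{\widetilde{\M}}_n^\times$; the $[\fp]$-relative set is obtained from the relative one by collapsing the class of $\fp$, and the type set is the full type quotient. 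Since all four numbers sit in a single chain of surjections, the Proposition is equivalent to the assertion that each of the three maps is a bijection, and I would prove it by identifying the fiber of each with a finite class-group quotient that the field-data table shows to be trivial.

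First I would record the structural input from Proposition~\ref{unramified}: $\H_n$ is unramified at every finite prime of $F_n$. Consequently the local normalizer of a maximal order is $F_{n,v}^\times\widetilde{\M}_{n,v}^\times$ at every finite $v$, so the global normalizer equals $\widehat{F}^\times\widehat{\widetilde{\M}}_n^\times$ and the passage from the ideal-class set to the type set is exactly the quotient by the two-sided ideal class group, which under unramifiedness is the image of the ideal class group $\Cl(\uOOn)$. This is what makes the relative sets the correct intermediate objects: $h_{[\fp]\text{-rel}}(\H_n)$ quotients only by the two-sided ideals that are powers of $\fp$, and the type set is the further quotient by the two-sided classes prime to $\fp$.

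Next I would push everything forward along the reduced norm. Because $B$ is totally definite, the Hasse--Schilling--Maass norm theorem gives $\Nm(B^\times)=F_{n,+}^\times$, while the reduced norm carries $\widehat{\widetilde{\M}}_n^\times$ onto the local units of $\uOOn$ and the scalars $\widehat F^\times$ onto the squares $(\widehat F^\times)^2$. Tracking these through the double cosets identifies the fiber of each consecutive surjection with an explicit finite subquotient of a narrow class group of $F_n$ or of $\uRn$: the defect of $h_{[\fp]\text{-rel}}(\H_n)\twoheadrightarrow t(\H_n)$ is measured by the $2$-torsion quotient $\#\frac{\Cln(\OO_n^\fp)[2]}{\Prin(\OO_n^\fp)}$ of the field-data table (a two-sided ideal $\mathfrak a\widetilde{\M}_n$ is principal precisely when $\mathfrak a^2$ has a totally positive generator, via the reduced norm, so only $2$-torsion intervenes), while the defects of the first two surjections are measured by the narrow class data of $F_n$ and by the class $[\fp]\in\Cln(F_n)$, exactly the invariants governed by Proposition~\ref{house}(c) and Theorem~\ref{pills}(b). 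Reading off the table, every one of these controlling quotients equals $1$ for the $n$ in range; in particular $\#\frac{\Cln(\OO_n^\fp)[2]}{\Prin(\OO_n^\fp)}=1$ forces $t(\H_n)=h_{[\fp]\text{-rel}}(\H_n)$, and the vanishing of the remaining $\fp$- and narrow-class contributions forces the other two maps to be bijections. Chaining the three bijections gives the asserted equalities.

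The step I expect to be the main obstacle is the type-versus-class equality $t(\H_n)=h(\H_n)$. Since $\H_n$ is totally definite it fails Eichler's condition over $F_n$, so strong approximation is unavailable and $h(\H_n)$ is genuinely larger than any class number of $F_n$ (for instance $h(\H_{32})=58$); the equality is therefore not formal. It rests entirely on the unramifiedness of $\H_n$ at all finite primes, which pins down the normalizer and hence the two-sided ideal class group, together with the vanishing of the $2$-torsion class-group quotient for each $n$. Were that quotient nontrivial for some $n$ in range, $t(\H_n)$ would be a proper quotient of $h(\H_n)$ and the Proposition would fail; so the crux is to verify, as the table does, that this quotient is trivial in every case treated.
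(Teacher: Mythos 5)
Your reduction ultimately rests on the assertion that ``every one of these controlling quotients equals $1$ for the $n$ in range,'' where you take the controlling data to be narrow-class invariants of $F_n$ and $\uRn$ together with the column $\#\Cln(\OO_n^\fp)[2]/\Prin(\OO_n^\fp)$. That assertion is false as read from the table: $\hnar(\OO_n)=2$ for $n=12,20,24,28,36,40,48,60$ and $\hnar(\OO_n^\fp)=2$ for $n=28,60$; only the $2$-torsion column is identically $1$. So if narrow class groups really were what measures your fibers, the argument would fail for most $n$ in the table. The invariant that actually collapses everything---and which your proof never invokes---is the \emph{ordinary} class group: $\Cl(\uOO_n)$ (equivalently the class group of $R_n$ or $\uRn$) is trivial for every $n$ in range. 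This is exactly how the paper obtains the first equalities: $\hrel(\H_n)$ and $h_{[\fp]\text{-rel}}(\H_n)$ are \emph{defined} as the numbers of orbits on $\Cl(\H_n)$ of the class group and of the subgroup generated by $[\fp]$, respectively, so when that class group is trivial both numbers equal $h(\H_n)$ with no geometry at all; then $t(\H_n)\le h(\H_n)$ is cited from Voight and the equality $t=h$ is verified by machine. (Note also that your chain lists the middle two quantities in the wrong order: orbits of the full class group are coarser than orbits of $\langle[\fp]\rangle$, so the surjections run $h\twoheadrightarrow h_{[\fp]\text{-rel}}\twoheadrightarrow\hrel$, not $h\twoheadrightarrow\hrel\twoheadrightarrow h_{[\fp]\text{-rel}}$.) In this paper the narrow class group governs different phenomena---$\Gamma_0/\Gamma_+$ and the degree of the cover $\Grp\rightarrow\Grz$ in Proposition~\ref{house} and Theorem~\ref{pills}---not the passage from ideal classes to types.

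The second gap is the parenthetical equivalence your last step leans on: ``a two-sided ideal $\mathfrak a\widetilde{\M}_n$ is principal precisely when $\mathfrak a^2$ has a totally positive generator.'' Only the forward direction follows by taking reduced norms. For the converse, Hasse--Schilling--Maass produces $\alpha\in\H_n^\times$ with $\Nm_{\H_n/F_n}(\alpha)$ a totally positive generator of $\mathfrak a^2$, but you then need $\alpha$ to generate $\mathfrak a\widetilde{\M}_n$ locally at every finite prime, and equality of reduced norms of two ideals does not imply equality of the ideals; manufacturing such a generator is precisely the strong-approximation step that fails for a totally definite algebra---the very failure you flag yourself one paragraph later. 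What is correct and worth keeping is your normalizer observation: since $\H_n$ is unramified at all finite primes (Proposition~\ref{unramified}), the normalizer of $\widehat{\widetilde{\M}}_n$ in the finite ideles is $\widehat F^\times\widehat{\widetilde{\M}}_n^\times$, so the type set is the orbit space of the ordinary class group acting on the class set, i.e., $t(\H_n)=\hrel(\H_n)$ holds for structural reasons. Combined with $\Cl(\uOO_n)=1$ this would give a computation-free proof of $t(\H_n)=h(\H_n)$, which is genuinely more than the paper's own proof does. But as written, your argument both misidentifies the numerical input from the table and rests its key step on an unsupported equivalence, so it does not go through.
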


\begin{proof}  Within the range of the table,
the class group of $R_n$ is always trivial.  So $\hrel(\H_n)$,
the number of orbits of the class group on $\Cl(\H_n)$, 
is equal to $\#\Cl(\H_n) = h(\H_n)$, and likewise for
$h_{[\fp]\text{-rel}}(\H_n)$, the number of orbits of the subgroup generated by
$\fp$.

By \cite[Exercise 17.3, Lemma 17.4.8]{Vo}, the set of right orders of the $I_i$ contains a representative of every
isomorphism class of maximal orders, where $I_1, \dots, I_{h(\H_n)}$ is a set of
representatives for the left ideal classes of a fixed maximal order.
Thus $t(\H_n) \le h(\H_n)$.
In the range of the table it can be computed that $t(\H_n) = h(\H_n)$.
\end{proof}

The following table shows the identifications of $\PSUT(R_n)$,
$\PUTz(R_n)$, and $\PUT(R_n)$ with $\Gamma_{1, n}, \Gamma_{+,n}$,  and
$\Gamma_{0, n}$.
 \begin{table}[H]
   \caption{Arithmetic discrete subgroups}
\begin{center}
\begin{tabular}{r|c|c|c}
      $n$ & $\PSUT(R_n)$ 
& $\PUTz(R_n)$ &
      $\PUT(R_n)$ 
      \\ \hline
      $8$ & $\Gamma_1= \Gamma_+$ & $\Gamma_0$ & $\Gamma_0$ \\ \hline
      $12$ & $\Gamma_1$ & $\Gamma_+ =\Gamma_0$ & $\Gamma_+ =\Gamma_0$ \\ \hline
      $16$ & $\Gamma_1=\Gamma_+$ & $\Gamma_0$ & $\Gamma_0$ \\ \hline
      $20$ & $\Gamma_1$ & $\Gamma_+ = \Gamma_0$ & $\Gamma_+ =\Gamma_0$ \\ \hline
      $24$ & $\Gamma_1$ &  $\Gamma_+ = \Gamma_0$ & $\Gamma_+ = \Gamma_0$ \\ \hline
      $28$ & $\Gamma_1$ & $\Gamma_+$ & $\Gamma_0$ \\ \hline
      $32$ & $\Gamma_1= \Gamma_+$ & $\Gamma_0$ & $\Gamma_0$ \\ \hline
      $36$ & $\Gamma_1$ & $\Gamma_+ = \Gamma_0$ & $\Gamma_+ =\Gamma_0$ \\ \hline
      $40$ & $\Gamma_1$ & $\Gamma_+ = \Gamma_0$ & $\Gamma_+ = \Gamma_0$ \\ \hline
      $48$ & $\Gamma_1$ & $\Gamma_+ =\Gamma_0$ & $\Gamma_+ = \Gamma_0$ \\ \hline
      $60$ & $\Gamma_1$  & $\Gamma_+$ & $\Gamma_0$ 
\end{tabular}
\end{center}
 \end{table}

\begin{remark1}
{\rm 
The quotient graph of groups
  $\SUT(R_n)\dbs(\SUT(R_n), \Delta_n)$, and hence its fundamental
  group $\SUT(R_n)$,  can be constructed from
  \[
  \PSUT(R_n)\dbs (\PSUT(R_n), \Delta_n)
  \]
  by inflating each edge and
  vertex group by a central $\pm 1$. Thus, amalgam presentations
  of $\SUT(R_n)$ can be constructed from those we give for
  $\PSUT(R_n)$.
  }
\end{remark1}
\subsection{Notation and Basic Results for Groups}
\label{rounded}
For a group $\Gamma$ acting on a tree $\Delta$ we write $\Gr(\Gamma):=
\Gamma\dbs(\Gamma, \Delta_{\Gamma})$, where, as in Section
\ref{groupgraph}, $\Delta_{\Gamma}$ is the graph obtained from
$\Delta$ by subdividing exactly those edges which are inverted by $\Gamma$.

\begin{definition}  We use the following conventions to label the vertex and 
edge groups of a graph of groups. 
For an integer $n$, we use $C_n$, or simply $n$, to denote the cyclic
group and $D_n$ for the dihedral group of order $2n$. For edge groups
$C_1$ we elide the label altogether.
For an even integer $n$,
the {\em quaternion group} $Q_{2n}$ of order $2n$ is the
subgroup of $\H(\BR)$ generated by $e^{2\pi i/n}$ and $j$.
It is easy to show that $Q_{2n}$ is the unique extension of $\Z/n\Z$ by
$\Z/2\Z$ acting by $x \to x^{-1}$ that is not a semidirect product, and
that $Q_{2n}/\{\pm 1\} \isisom D_{n/2}$.  In addition, we denote the binary
tetrahedral, octahedral, and icosahedral groups 
\cite[Th\' eor\` eme I.3.7]{V} by $E_{24}, E_{48}$, and $E_{120}$ respectively.
Note that $|Q_n| = n$ and $|E_n| = n$ but $|D_n| = 2n$.  Recall that
$\Gg$ or $\Gg_n$ denotes the subgroup of $\UT$ generated by $H$ and $T_n'$.

Below we give the key to our notation and conventions in the pictures
of the quotient graphs in Sections \ref{until} and \ref{until2}.
For $n<32$ we draw the graphs of the quotients of $\Delta$
by $\PUT(R_n)$, $\PUTz(R_n)$, and $\PSUT(R_n)$.
For $n\geq 32$ we only draw the graph of $\overline{\gr}_n
=\Gamma_{n,0}\backslash\Delta=
\PUT(R_n)\backslash\Delta$ due to space constraints.

\begin{key}
  \label{key}
  {\rm
Vertices and edges of quotient graphs are labeled by their
corresponding stabilizer groups.  A (nonelided) vertex of the quotient graph of
$\PUT(R_n)$ or $\PUTz(R_n)$
is indicated by a square if it is ramified in the cover from $\PSUT(R_n)$
and by a circle if it is unramified there.
The graphs $\Gamma_+\backslash \Delta$ and $\Gamma_1\backslash \Delta$
are bipartite; the bipartition on vertices is shown using red and blue.
The vertex of $\PUT(R_n)\backslash\Delta$ lying below $\bv\in\Ver(\Delta)$
with $\M_\bv =\M$ as in Section \ref{vet} is marked with an M.
Likewise the vertex or elided half-edge vertex
of $\PUT(R_n)\backslash\Delta$ lying below $\bw\in \Ver(\Delta)$
as in Proposition \ref{droul} is marked with a T.
The sub graph-of-groups $\overline{P}_n$ in $\PUT(R_n)\backslash \Delta$
with $\pi_{1}(\overline{P}_n)\cong \Pp\!\Gg_n\cong S_4\ast_{D_4}D_n$
as in Remark \ref{inj pi}
is shown in the picture of $\PUT(R_n)\backslash\Delta$ with magenta edges.
}
\end{key}
\end{definition}

\section[The $n =
        8,12,16,24$ examples:
$\Pp\!\Gg_n = \PUTz(R_n)=\PUT(R_n)$]{
\texorpdfstring{$\text{The } n=8,12,16,24 \text{ examples}:
\Pp\!\Gg_n = \PUTz(R_n)=\PUT(R_n)$}{The n= 8, 12, 16, 24 examples}}
\label{until}
For $n = 8,12,16,24$ it is known that $\Pp\!\Gg_n = \PUT(R_n)$
(see \cite[Theorems 1.1, 1.2]{IJKLZ} for references)
and that $\Pp\!\Gg_n\cong S_4\ast_{D_4}D_n$ by \cite{RS}
(see \cite[Theorem 5.1]{IJKLZ}).
We establish via quotient graphs in this section that
$\Pp\!\Gg_n=\PUT(R_n)$ and $\Pp\!\Gg_n\cong S_4\ast_{D_4}D_n$
for $n=8, 12, 16, 24$.  We also compute the Euler-Poincar\'{e}
characteristics $\chi(\PUT(R_n))$, $\chi(\PSUT(R_n))$, and
$\chi(\PUTz(R_n))$ from our $\Grn$ and $\oGrn$
for $n=8,12,16, 24$.
In all cases the answers agree with \cite[Theorem 6.6]{IJKLZ},
giving a good check on our quotient graphs.

\subsection[$n=8$]
    {\texorpdfstring{{\boldmath{$n=8$}}}{n=8}}
    \label{eat}
   We have $\PSUT(R_8) = \Gamma_{8, 1} = \Gamma_{8,+}$ and
   $\PUTz(R_8)= \PUT(R_8) = \Gamma_{8,0}$.
 The quotient graph of groups $\Gr_{\! 8}$ 
for $\PSUT(R_8)$ is
\begin{center}
\begin{tikzpicture}
\draw[ultra thick,color = green] (3.4,0)--(5.6,0);
\draw[ultra thick,color = blue] (3, 0) circle [radius = 0.4];
\draw[ultra thick,color = red] (6, 0) circle [radius = 0.4];
\node at (3,0){$S_4$};
\node at (1.5,0){$\Gr_{\! 8}$:};
\node at (3.4,0.4){{\tiny $M$}};
\node at (6,0){$S_4$};
\node at (4.5,0.2){$D_4$};
\node at (6.6,-0.2){.};
\end{tikzpicture}
\end{center}
From $\Gr_{\! 8}$  we compute the Euler-Poincar\'{e} characteristic
$$\chi\left(\PSUT(R_8)\right) = 1/24 + 1/24 - 1/8 = -1/24$$
and the amalgam $\PSUT(R_8) = \pi_1(\Gr_{\! 8})=S_4*_{D_4}S_4$.

The quotient h-graph of
groups $\oGr_{\! 8}$ for $\PU_2^\zeta(R_8)=\PUT(R_8)$ is
\begin{center}
\begin{tikzpicture}
\draw[ultra thick,color = fuchsia] (0.4,0)--(1.5,0);
\draw[ultra thick,color = blue] (0, 0) circle [radius = 0.4];
\node at (0,0){$S_4$};
\node at (-1.5,0){$\oGr_{\! 8}$:};
\node at (0.4,0.4){{\tiny $M$}};
\node at (0.95,0.2){$D_4$};
\node at (2,0){$D_8$};
\node at (2.3,0.3){{\tiny $T$}};
\node at (2.6,-0.2){.};
\end{tikzpicture}
\end{center}
From $\oGr_{\!8}$ we compute using \eqref{chi}
$$\chi(\PUTz(R_8)) =\chi(\PUT(R_8)) = 1/24 + 1/16
-1/8 = -1/48 $$
as well as the amalgam $\PUT(R_8)=\pi_1(\oGr_{\! 8})=S_4\ast_{D_4}D_8$.
We see that $\PG_8=\PUT(R_8)$ and we hence recover
$\PG_8\cong S_{4}\ast_{D_4}D_8$.
\subsection[$n=12$]
    {\texorpdfstring{\protect{\boldmath{$n=12$}}}{n=12}}\label{sec:nequals12}
     We have $\PSUT(R_{12}) = \Gamma_{12, 1}$ and
   $\PUTz(R_{12})= \PUT(R_{12}) = \Gamma_{12, +}=\Gamma_{12,0}$.
     In this case the double cover $$\Gamma_{12,1}\backslash \Delta
     \longrightarrow \Gamma_{12,+}\backslash \Delta =\Gamma_{12,0}
     \backslash \Delta$$
is not \'{e}tale.
The quotient graph of groups $\Gr_{12}$ for $\PSUT(R_{12})$ is
\begin{center}
\begin{tikzpicture}
\draw[ultra thick,color = green] (3.4,0)--(5.6,0);
\draw[ultra thick,color = blue] (3, 0) circle [radius = 0.4];
\draw[ultra thick,color = red] (6, 0) circle [radius = 0.4];
\node at (1.5,0){$\Gr_{12}$:};
\node at (3,0){$A_4$};
\node at (3.4,0.4){{\tiny $M$}};
\node at (6,0){$D_6$};
\node at (4.5,0.2){$D_2$};
\node at (6.6,-0.2){,};
\end{tikzpicture}
\end{center}
giving
$\PSUT(R_{12}) \isisom \pi_1(\Gr_{12})\cong A_4 *_{D_2} D_6$ and
$\chi(\PSUT(R_{12})) = -1/12$.

The quotient h-graph of groups for $\PU_2^\zeta(R_{12})=\PUT(R_{12})$ is   
\begin{center}
\begin{tikzpicture}
  \draw[ultra thick,color = fuchsia] (3.35,0)--(5.55,0);
\node at (1.5,0){$\oGr_{12}$:};
  \node[draw,ultra thick,color = blue] at (3,0){\color{black} $S_{4}$};
\node[draw,ultra thick,color = red] at (6,0){\color{black} $D_{12}$};
\node at (3.5,0.4){{\tiny $M$}};
\node at (6.6,0.4){{\tiny $T$}};
\node at (4.5,0.2){$D_{4}$};
\node at (6.8,-0.2){,};
\end{tikzpicture}
\end{center}
from which we derive $\PUTz(R_{12}) = \PUT(R_{12})\isisom
\pi_1(\oGr_{12})
\isisom S_4 *_{D_4} D_{12}$,
$$\chi(\PUTz(R_{12})) = \chi(\PUT(R_{12})) = -1/24,$$
and $\PG_n=\PUT(R_{12})\cong S_4\ast_{D_4}D_{12}$.

\subsection[$n=16$]
{\texorpdfstring{\protect{\boldmath{$n=16$}}}{n=16}}
     Here $\PSUT(R_{16}) = \Gamma_{16, 1}=\Gamma_{16, +}$ and
   $\PUTz(R_{16})= \PUT(R_{16}) = \Gamma_{16, 0}$.

     The  graph of groups $\Gr_{16}$
     and the corresponding amalgam for $\PSUT(R_{16})\isisom\pi_1(\Gr_{16})$ are
\begin{center}
\begin{tikzpicture}
\draw[ultra thick,color = green] (0.4,0)--(2.6,0);
\draw[ultra thick,color = green] (3.4,0)--(5.6,0);
\draw[ultra thick,color = green] (6.4,0)--(8.6,0);
\draw[ultra thick,color = blue] (0, 0) circle [radius = 0.4];
\draw[ultra thick,color = red] (3, 0) circle [radius = 0.4];
\draw[ultra thick,color = blue] (6, 0) circle [radius = 0.4];
\draw[ultra thick,color = red] (9, 0) circle [radius = 0.4];
\node at (-1.5,0){$\Gr_{16}$:};
\node at (0,0){$S_4$};
\node at (0.4,0.4){{\tiny $M$}};
\node at (3,0){$D_8$};
\node at (6,0){$D_8$};
\node at (9,0){$S_4$};
\node at (1.5,0.2){$D_4$};
\node at (4.5,0.2){$D_8$};
\node at (7.5,0.2){$D_4$};
\end{tikzpicture}
\end{center}
$$\PSUT(R_{16}) \isisom S_4 *_{D_4} D_8 *_{D_4}
S_4 .$$
We compute
$\chi\left(\PSUT(R_{16})\right) = -5/48$ using \eqref{chi}.

The quotient h-graph of groups $\oGr_{16}$
for  $\PUTz(R_{16})=\PUT(R_{16})$ is  
\begin{center}
\begin{tikzpicture}
\draw[ultra thick,color = fuchsia] (0.4,0)--(2.6,0);
\draw[ultra thick,color = fuchsia] (3.4,0)--(4.5,0);
\draw[ultra thick,color = blue] (0, 0) circle [radius = 0.4];
\draw[ultra thick,color = blue] (3, 0) circle [radius = 0.4];
\node at (-1.5,0){$\oGr_{16}$:};
\node at (0,0){$S_4$};
\node at (0.4,0.4){{\tiny $M$}};
\node at (3,0){$D_8$};
\node at (1.5,0.2){$D_4$};
\node at (4,0.2){$D_{8}$}; 
\node at (5.2,0){$D_{16}$}; 
\node at (5.5,0.3){{\tiny $T$}};
\node at (5.8,-0.2){,};
\end{tikzpicture}
\end{center}
giving $\PUTz(R_{16}) = \PUT(R_{16}) \isisom
\pi_1(\oGr_{16})\isisom S_4 *_{D_4} D_{16}$ and
$$\chi(\PUTz(R_{16})) = \chi(\PUT(R_{16})) =
-5/96 .$$
As before we have  $\PUT(R_{16}) = \Pp \!\Gg_{16}$ from
Proposition \ref{pointer}.
\subsection[$n=24$]{\texorpdfstring{\protect{\boldmath{$n=24$}}}{n=24}}
    \label{n=24}
In this case $\PSUT(R_{24}) = \Gamma_{24,1}$ while $\PUTz(R_{24}) =
\PUT(R_{24}) = \Gamma_{24, +} = \Gamma_{24, 0}$. Again the double
cover $\PSUT(R_n)\backslash \Delta\longrightarrow
\PUT(R_n)\backslash \Delta$ is not \'{e}tale.

The  quotient graph of groups $\Gr_{24}$ for $\PSUT(R_{24})$ is
\begin{center}
\begin{tikzpicture}
\draw[ultra thick,color = green] (0.4,-1)--(2.6,0);
\draw[ultra thick,color = green] (0.4,1)--(2.6,0);
\draw[ultra thick,color = green] (3.4,0)--(5.6,0);
\draw[ultra thick,color = blue] (0, -1) circle [radius = 0.4];
\draw[ultra thick,color = blue] (0, 1) circle [radius = 0.4];
\draw[ultra thick,color = red] (3, 0) circle [radius = 0.4];
\draw[ultra thick,color = blue] (6, 0) circle [radius = 0.4];
\node at (0,-1){$S_4$};
\node at (0,1){$S_4$};
\node at (0.4,1.4){{\tiny $M$}};
\node at (-2.5,0){$\Gr_{24}$:};
\node at (3,0){$D_4$};
\node at (6,0){$D_{12}$};
\node at (1.5,-0.9){$D_4$};
\node at (1.5,0.9){$D_4$};
\node at (4.5,0.2){$D_4$};
\node at (6.6,-0.2){.};
\end{tikzpicture}
\end{center}
From this we compute
$\chi\left(\PSUT(R_{24})\right) = -1/8$ 
and $\PSUT(R_{24})$ is the amalgam
$$\PSUT(R_{24})  \isisom \pi_1(\Gr_{24})\isisom *_{D_4} \{S_4, S_4, D_{12}\}$$
of $D_{12}$ and the two copies of $S_4$ over their common subgroup
$D_4$.

Since $\Gamma_{24,0}$ acts without inversions we get the following
quotient
 graph of groups for $\PU_2^\zeta(R_{24}) = \PUT(R_{24})  = 
\Gamma_{24,+} = \Gamma_{24,0}:$ 
\begin{center}
\begin{tikzpicture}
\draw[ultra thick,color = fuchsia] (0.4,0)--(2.6,0);
\draw[ultra thick,color = fuchsia] (3.4,0)--(5.55,0);
\draw[ultra thick,color = blue] (0, 0) circle [radius = 0.4];
\node at (-1.5,0){$\oGr_{24}$:};
\node at (0,0){$S_4$};
\node at (0.4,0.4){{\tiny $M$}};
\node[draw,ultra thick,color = red] at (3,0){\color{black} $D_8$};
\node[draw,ultra thick,color = blue] at (6,0){\color{black} $D_{24}$};
\node at (6.6,0.4){{\tiny $T$}};
\node at (1.5,0.2){$D_4$};
\node at (4.5,0.2){$D_{8}$};
\node at (6.9,-0.2){.};
\end{tikzpicture}
\end{center}
Hence $\PUTz(R_{24}) = \PUT(R_{24}) \isisom \pi_1(\oGr_{24})\isisom  S_4 *_{D_4}
D_{24}\isisom \PGg_{24}$ and $$\chi(\PUTz(R_{24})) =\chi(\PUT(R_{24})) = -1/16.$$

\section[The $n =20, 28, 32, 36, 40, 48, 60$ examples:
$\PGg_n \ne \PUTz(R_n)$]
{\texorpdfstring{The $n =20, 28, 32, 36, 40, 48, 60$ examples:
$\PGg_n \ne \PUTz(R_n)$}{The n=20, 28, 32, 40, 48, 60 examples}}
\label{until2}
Now $\Gg_n \ne \UTz(R_n)$, $\PGg_n \ne
\PUTz(R_n)$, $\SUT(R_n)
\ne \SGg_n$, and $\PSGg_n \ne \PSUT(R_n)$ when $n\notin\{
8,12,16,24\}$ (see \cite[Theorems 1.1, 1.2]{IJKLZ} for references). In fact, in our examples $\PGg_n$ can be seen as
a proper subtree of the quotient h-graph of groups for
$\PUTz(R_n)$. Theorem~\ref{prop:inf-index} shows that
$\PGg_n \ll [\PUTz(R_n)]_{f}$ for these $n$.
\subsection[$n=20$]
           {\texorpdfstring{\protect{\boldmath{$n=20$}}}{n=20}}
We have the identifications $\PSUT(R_{20}) = \Gamma_{20, 1}$ and
$\PUTz(R_{20}) = \PUT(R_{20}) = \Gamma_{20, +} = \Gamma_{20, 0}$. 
The double cover $\gr_n=\Gamma_{20,1}\backslash\Delta\longrightarrow
\overline{\gr}_n=\Gamma_{20,0}\backslash\Delta$ is not \'{e}tale. 
The  quotient graph of groups $\Gr_{20}$ for $\PSUT(R_{20})$ is
\begin{center}
\begin{tikzpicture}
\draw[ultra thick,color = green] (0.4,-1)--(2.6,0);
\draw[ultra thick,color = green] (0.4,1)--(2.6,0);
\draw[ultra thick,color = green] (3.4,0)--(5.6,0);
\draw[ultra thick,color = blue] (0, -1) circle [radius = 0.4];
\draw[ultra thick,color = blue] (0, 1) circle [radius = 0.4];
\draw[ultra thick,color = red] (3, 0) circle [radius = 0.4];
\draw[ultra thick,color = blue] (6, 0) circle [radius = 0.4];
\node at (0,-1){$A_5$};
\node at (0,1){$A_5$};
\node at (-2,0){$\Gr_{20}$:};
\node at (3,0){$A_4$};
\node at (3.4,0.4){{\tiny $M$}};
\node at (6,0){$D_{10}$};
\node at (1.5,-0.9){$A_4$};
\node at (1.5,0.9){$A_4$};
\node at (4.5,0.2){$D_2$};
\node at (6.6,-0.2){.};
\end{tikzpicture}
\end{center}
So we obtain 
$\PSUT(R_{20}) \isisom \pi_1(\Gr_{20})\isisom A_5 *_{A_4} A_5 *_{D_2}
D_{10}$ and $\chi\left(\PSUT(R_{20})\right) = -1/4.$

Since $\Gamma_{20, 0}$ acts without inversions we get the
 quotient graph of groups $\oGr_{20}$ for $\PU_2^\zeta(R_{20}) =\PUT(R_{20}) =\Gamma_{20, +} = \Gamma_{20,0}$ below:
\begin{center}
\begin{tikzpicture}
\draw[ultra thick,color = green] (0.4,0)--(2.65,0);
\draw[ultra thick,color = fuchsia] (3.35,0)--(5.55,0);
\draw[ultra thick,color = blue] (0, 0) circle [radius = 0.4];
\node at (-1.5,0){$\oGr_{20}$:};
\node at (0,0){$A_5$};
\node[draw,ultra thick,color = red]  at (3,0){\color{black} $S_4$};
\node[draw,ultra thick,color = blue]  at (6,0){\color{black} $D_{20}$};
\node at (3.5,0.4){{\tiny $M$}};
\node at (6.6,0.4){{\tiny $T$}};
\node at (1.5,0.2){$A_4$};
\node at (4.5,0.2){$D_4$};
\node at (6.9,-0.2){.};
\end{tikzpicture}
\end{center}
Thus $\PUTz(R_{20}) = \PUT(R_{20})\isisom \pi_1(\oGr_{20})\isisom A_5 *_{A_4} S_4 *_{D_4}D_{20}$ and
\[
\chi(\PUTz(R_{20})) = \chi(\PUT(R_{20})) = -1/8.
\]
Proposition \ref{prop:inf-index} shows that  $\PGg_{20} \sinf [\PUT(R_{20})]_f$,
so Corollary
\ref{cor:other-inf-index} applies.  On the other hand, it is clear that
$[\PUT(R_{20})]_f = \PUT(R_{20})$.

\subsection[$n=28$]
    {\texorpdfstring{\protect{\boldmath{$n=28$}}}{n=28}}
    \label{event}
This is one of the two examples (the other is $n=60$) for which the
unique prime above $2$ in $F_n$ splits in $K_n$.
Hence $\PSUT(R_{28}) = \Gamma_{28,1}$, $\PUTz(R_{28}) = \Gamma_{28,
  +},$ and $\PUT(R_{28}) = \Gamma_{28,0}$ are all distinct with $\PUT(R_{28})/\PUTz(R_{28}) \isisom \Z/2\Z$ and
$\PUTz(R_{28})/\PSUT(R_{28}) = \Z/2\Z$.

 The class number of $\H_{28}$ is $5$, but only three types of orders are
 connected to $\M$. 
 The quotient h-graph of groups $\oGr_{28}$ for $\PUT(R_{28})
 = \Gamma_{28, 0}$ is given below:

\begin{figure}[H]
  \includegraphics[scale = 0.8,trim={0 5.13696in 0 3in}, clip]{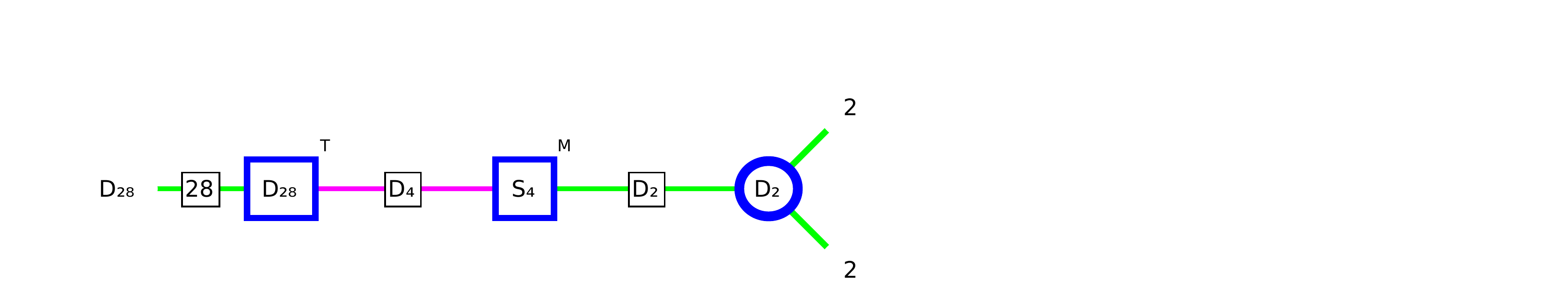}
\end{figure}
Thus
\begin{equation*}
  \PUT(R_{28}) \isisom \Gamma_{28,0}\isisom \pi_1(\oGr_{28}) = D_{28} *_{C_{28}} D_{28} *_{D_4}
  S_4 * C_2^{*2}
\end{equation*}
and $\chi(\PUT(R_{28}))=-13/12$.

 The quotient graph of groups for $\PUTz(R_{28}) = \Gamma_{28, +}$, which is the bipartite double cover of $\oGr_{28}$, is:

\begin{figure}[H]
  \includegraphics[scale = 0.8, trim={-1.3in, 3.3in, 6in, 3.1in}, clip]{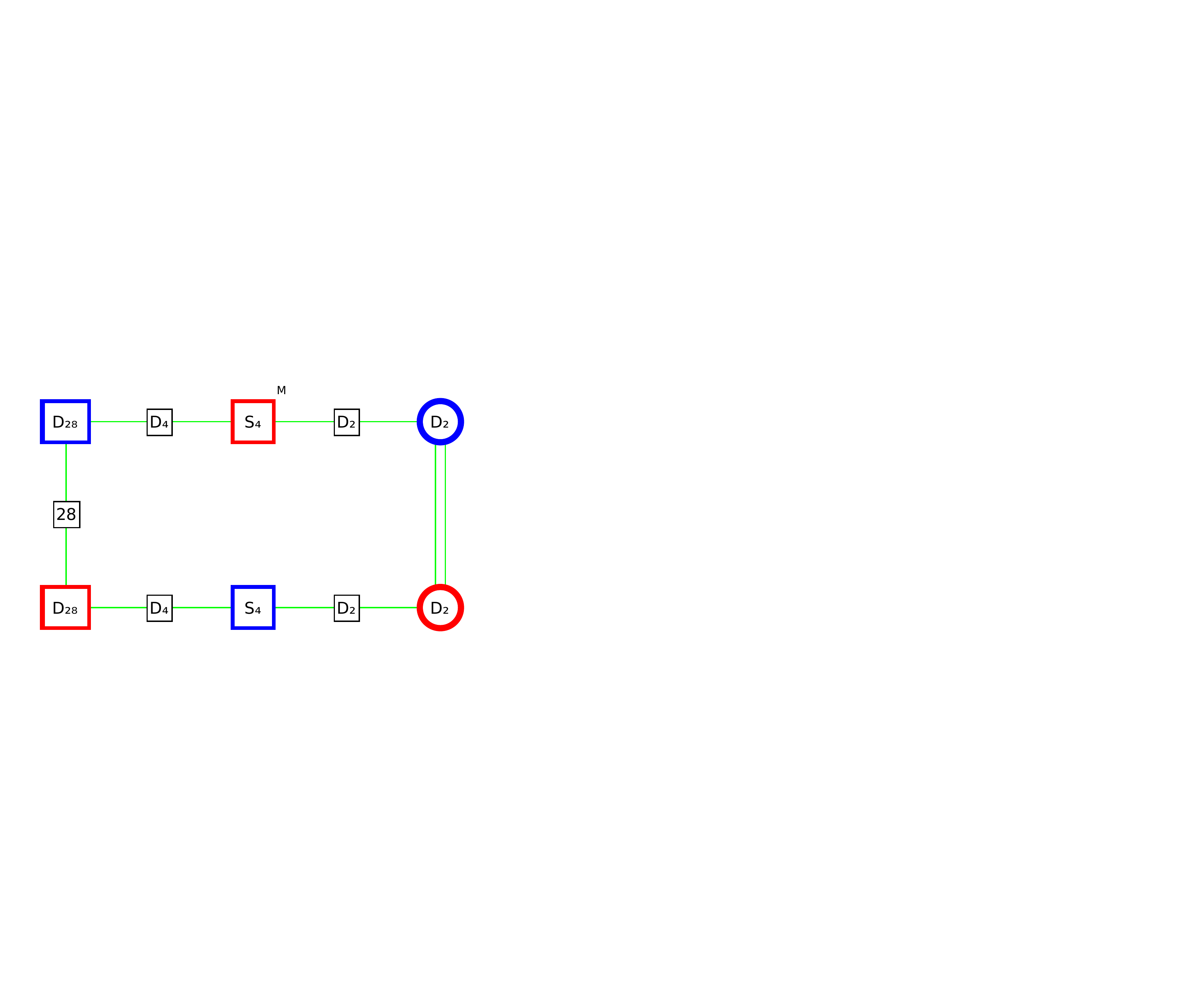}
\end{figure}

\noindent Notice that, unlike all the graphs we have dealt with above, this graph
is not  a tree; in fact it has genus $2$.  

By Theorem \ref{theorem: free generators},
\begin{equation*}\PU_2^\zeta(R_{28}) \isisom \Gamma_{28,+} =
  S_4 *_{D_4} D_{28} *_{C_{28}}
  D_{28} *_{D_4} S_4 * \Z^{*2}
\end{equation*}
and $\chi(\PU_2^\zeta(R_{28}))=-13/6$.

 The quotient graph of groups  $\Gr_{28}$ for $\Gamma_{28,1}$ is the double cover of the quotient graph
of groups for $\Gamma_{28,+}$ ramified at the vertices of that graph  marked with a
square:

\begin{figure}[H]

\includegraphics[scale = 0.8, trim={-1.15in 3in 5in 3.in}, clip]{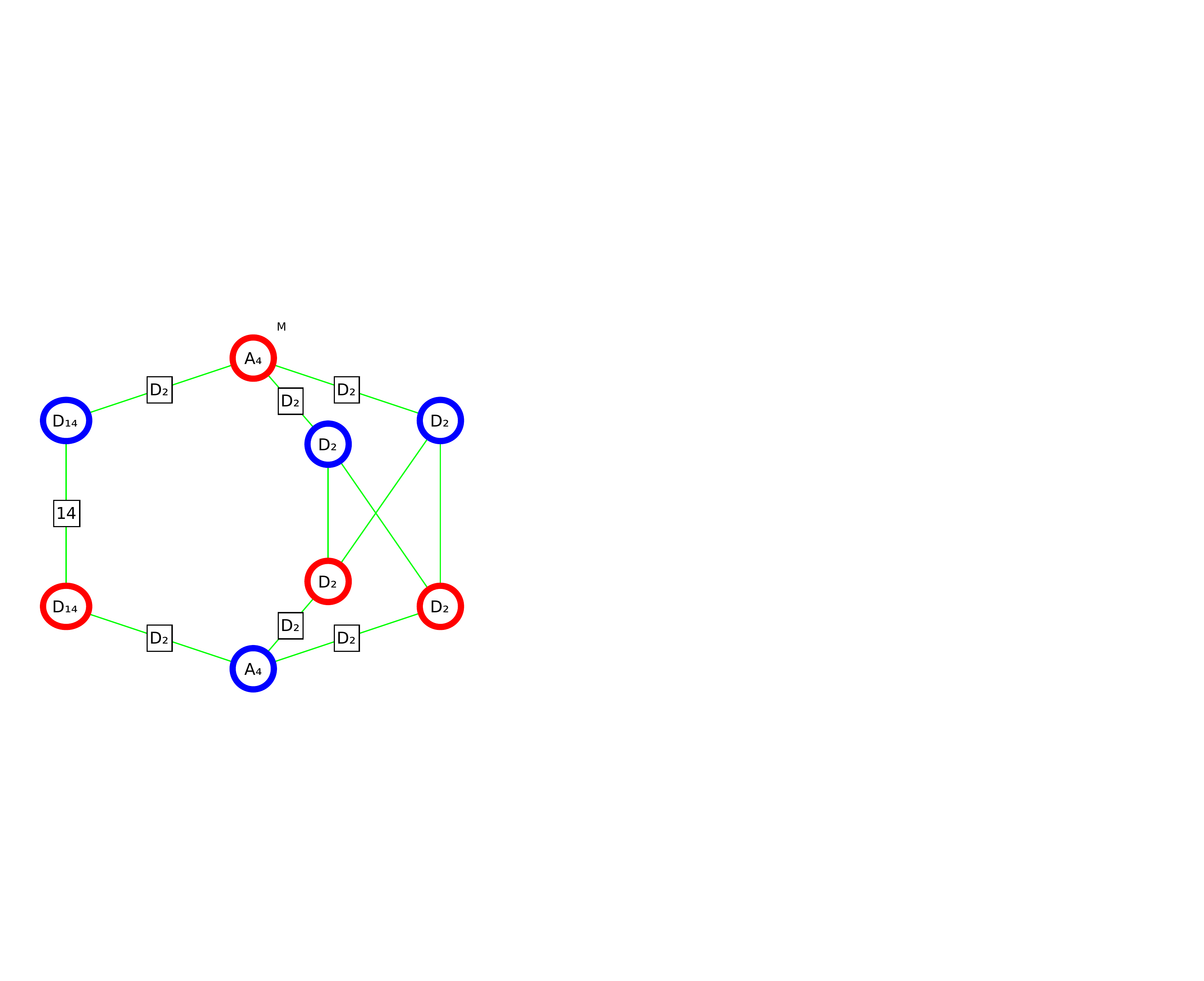}
\end{figure}
\noindent This graph has genus $4$.
By Theorem \ref{theorem: free generators}
\begin{equation}
  \label{ps28}
  \PSUT(R_{28}) \isisom \Gamma_{28,0}\isisom
  \pi_1(\Gr_{28})= A_4 *_{D_2} D_{14} *_{C_{14}}
  D_{14} *_{D_2} A_4 * \Z^{*4}
\end{equation}
and $\chi(\PSUT(R_{28}))=-13/3$.
We can summarize the $n=28$ example with:
\begin{gather}
  \Gg_{28}\ll\UT(R_{28})_f\ll\UTz(R_{28})\ll\UT(R_{28})\\
  \PGg_{28}\ll[\PUT(R_{28})]_f\ll\PUT(R_{28})
\end{gather}
For any $n$ we have $\PUTz(R_n)\lesssim\PUT(R_n)$, so in particular
this is true for $n=28$.  Note that here $[\PUT(R_n)]_f$ is not a subgroup
of $\PUTz(R_n)$: the cyclic group of order $28$ is contained in
$[\PUT(R_n)]_f$ but not in $\PUTz(R_n)$.

\subsection[$n=32$]
    {\texorpdfstring{\protect{\boldmath{$n=32$}}}{n=32}}
    \label{n=32}
We have $\PSUT(R_{32}) = \Gamma_{32, 1} = \Gamma_{32, +}$ and
  $\PUTz(R_{32}) = \PUT(R_{32}) = \Gamma_{32, 0}$.
The quotient graph of groups $\oGr_{32}$ for $\PUTz(R_{32}) = \PUT(R_{32}) = \Gamma_{32,0}$ is shown below broken
into two subgraphs. These subgraphs are to be glued together by
identifying vertices with a label such as $A$ or $\gamma$ in Subgraph 1 with
those with the same label in Subgraph 2. The vertices are also marked
(in the interior of the circle representing the vertex) with their
stabilizers in $\PU_2^\zeta(R_{32})=\PUT(R_{32})$. Recall that an integer $n$ should be 
read as the cyclic group of order $n$.

\begin{figure}[H]
\centering{\scalebox{.92}{\includegraphics[scale = 0.7, trim={
1in -1in -1in -1in},clip]
{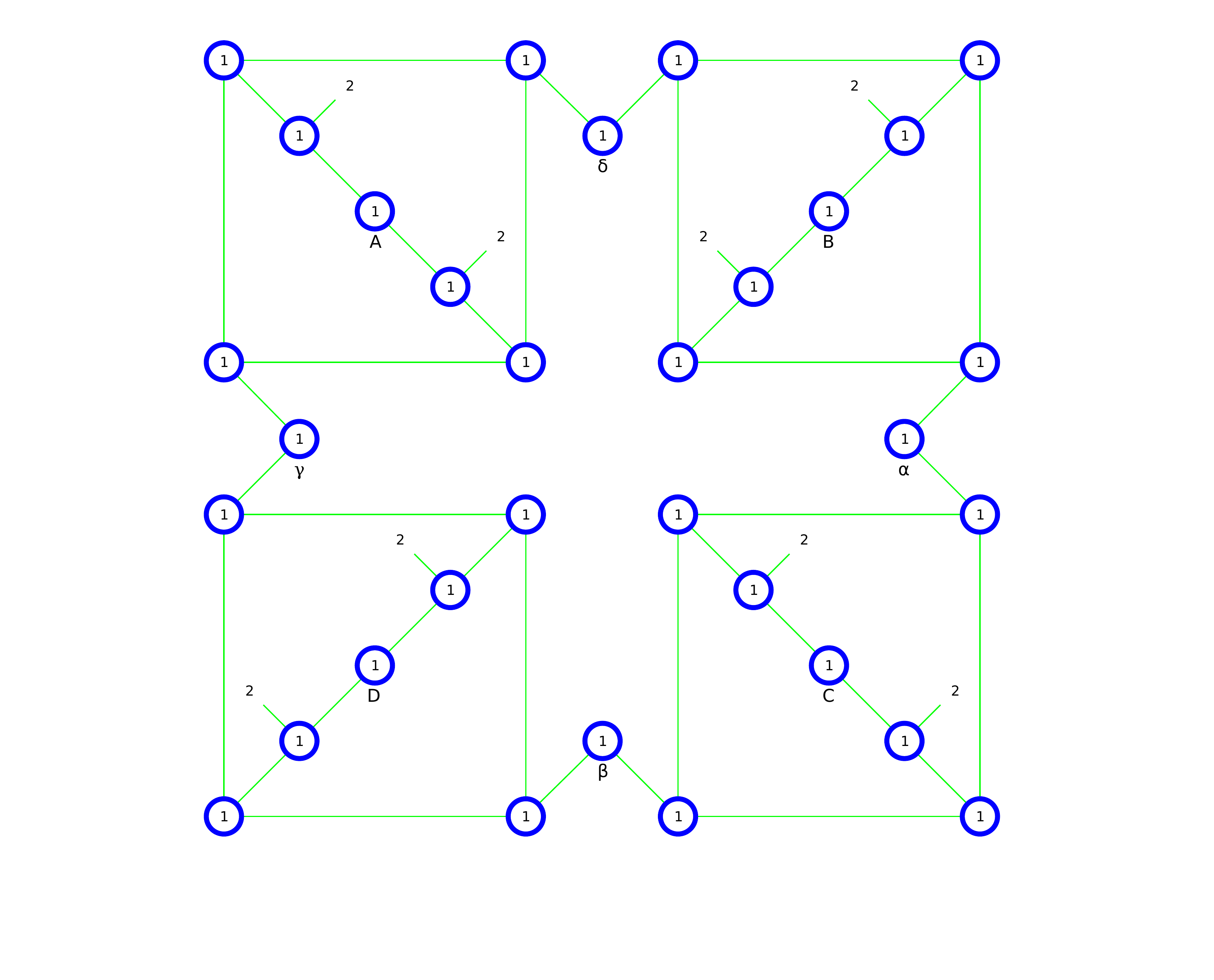}}}
\vspace{-0.45in}

\caption{ Subgraph 1 for $\PUTz(R_{32}) =\PUT(R_{32})=\Gamma_{32,0}$.}
\end{figure}

  \begin{figure}[H]
      
\centering{\scalebox{.92}{\includegraphics[scale = 0.6,
trim={1in -1in 1in -1in}, clip]{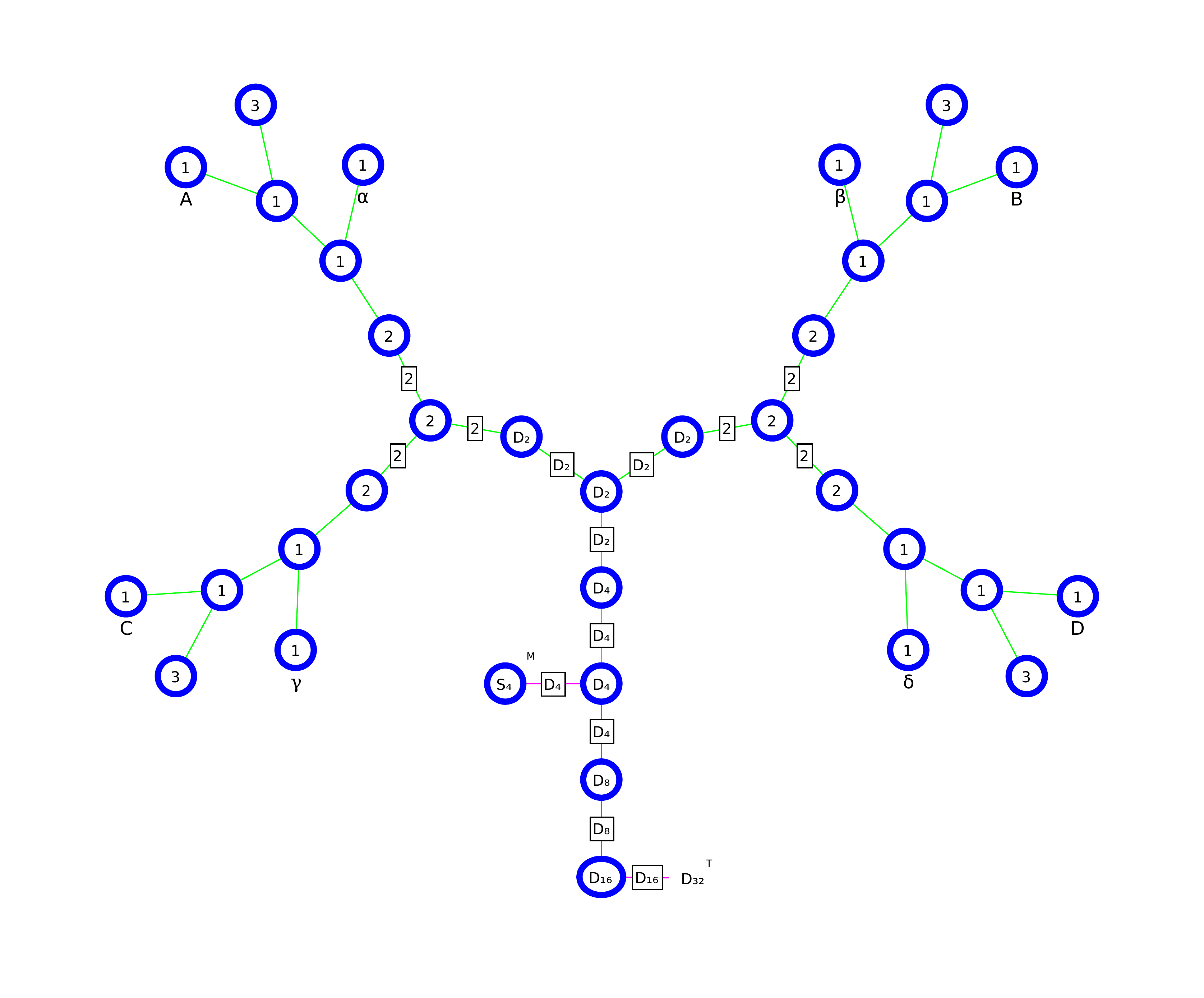}}}

\vspace{-0.1in}

\caption{ Subgraph 2 for $\PUTz(R_{32})= \PUT(R_{32})= \Gamma_{32,0}$.}
\end{figure}

  
 This case has the largest graph, since there are $58$
maximal orders.  On the other hand, $40$ of the maximal orders have
only $\pm 1$ as units.  It also has some edges that join a maximal
order to an isomorphic one, which does not occur for $n = 40, 48$.
The graph has genus $16$. By Theorem \ref{theorem: free generators} we have.
\begin{equation*}
  \PUT(R_{32})=\PU_2^\zeta(R_{32})\isisom \pi_1(\oGr_{32})\isisom
  D_{32} *_{D_4} S_4 * C_3^{*4} *
C_2^{*8} * \Z^{*{16}}
\end{equation*}
and $\chi(\PUT(R_{32}))=-1455/64$.
Again we see that $\PGg_{32} \sinf \PUT(R_{32})_f$, but this time
$\PUT(R_{32})_f \sinf \PUT(R_{32})$.

 The quotient graph of groups $\Gr_{32}$ for $\PSUT(R_{32}) = \Gamma_{32,1} = \Gamma_{32, +}$ is the bipartite double cover of this graph, the
vertex labels being the same.  The maximal orders with nontrivial unit
group again form a forest, and counting vertices and edges we see that
the graph has genus $40$.  Thus, by Theorem \ref{theorem: free generators},
the group is
\begin{equation*}
\PSUT(R_{32})= \Gamma_{32, 1} = \Gamma_{32, +}\isisom \pi_1(\Gr_{32}) \isisom  S_4 *_{D_4}
D_{16} *_{D_4} S_4 * C_3^{*8} * \Z^{*{40}}.
\end{equation*}

\subsection[$n=36$]
{\texorpdfstring{\protect{\boldmath{$n=36$}}}{n=36}}
In this case, $\PSUT(R_{36}) = \Gamma_{36, 1}$ while $\PUTz(R_{36}) =
\PUT(R_{36}) = \Gamma_{36, +} = \Gamma_{36, 0}$.
The quotient graph of groups $\oGr_{36}$  for $\PUTz(R_{36}) =\PUT(R_{36}) = \Gamma_{36, +} = \Gamma_{36, 0}$ is shown below. Notice the doubled edge.

\begin{figure}[H]
  \centering{\scalebox{.92}{\includegraphics[scale = 0.5,
trim={0, 3.5in, 0, 3in}, clip]{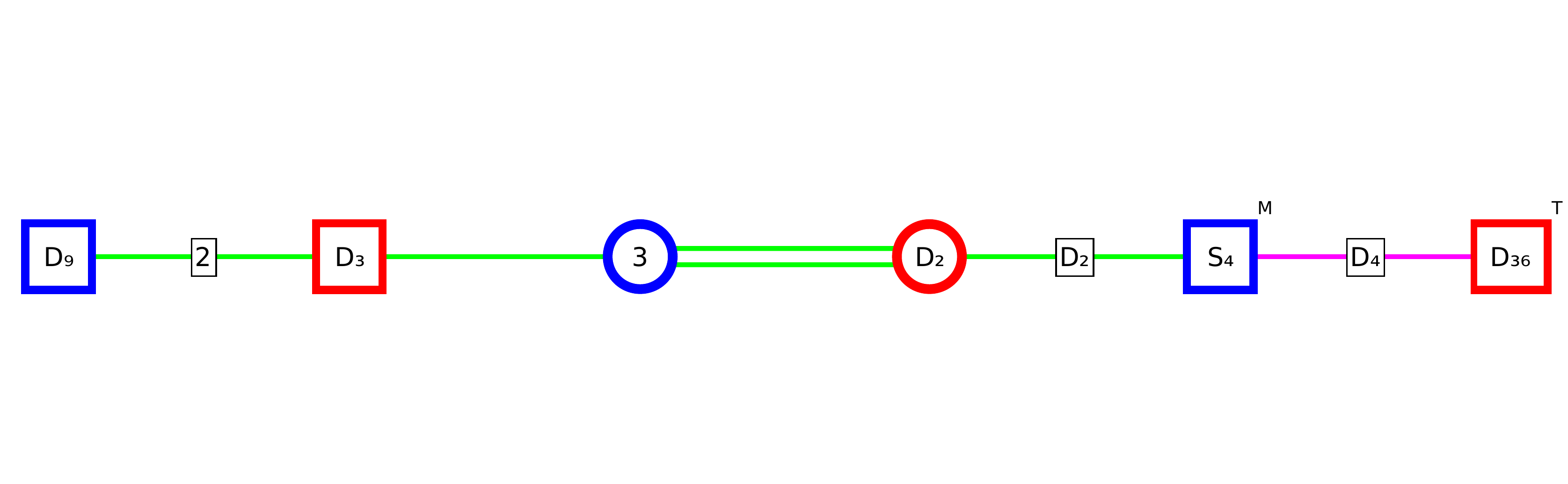}}}

\vspace{-0.1in}

\caption{ Graph of groups for $\PUTz(R_{36})=
  \PUT(R_{36}) = \Gamma_{36, +} = \Gamma_{36, 0}$.}
  \label{dino}
\end{figure}

In this case the residue field of the prime $\fp$ above $2$ has order $8$,
so each maximal order contains $9$ Eichler orders of level $\fp$,
rather than $3$ or $5$ as in the other examples. 
The graph has genus $1$. By Theorem \ref{theorem: free
  generators} we have
\begin{align*}
  \PUTz(R_{36})=&\PUT(R_{36})=\Gamma_{36,0}=\Gamma_{36,+} \isisom\\
  &\pi_1(\oGr_{36})
  \isisom D_9 *_{C_2} D_3 * C_3 * S_4 *_{D_4} D_{36} * \Z
\end{align*}
and $\chi(\PUT(R_{36}))=-217/72$.

As before, the presence of additional factors of finite order implies that
$\PGg_{36} \sinf [\PUT(R_{36})]_f$, while 
$[\PUT(R_{36})]_f \sinf \PUT(R_{36})$ because of the $\Z$ in the list of
factors  .

 The quotient graph of groups $\Gr_{36}$ for $\PSUT(R_{36}) = \Gamma_{36,1}$ is a double cover of $\oGr_{36}$ ramified at the
 four vertices indicated by squares in Figure \ref{dino}.
 The vertex labels for the unramified nodes are
the same, and the ramified nodes have vertices labeled with subgroups
of index~$2$.
The graph has genus $3$.  By Theorem \ref{theorem: free
   generators} we see that
\begin{equation*}
\PSUT(R_{36})= \Gamma_{36,1} \isisom \pi_1(\Gr_{36})\isisom C_9 * C_3^{*3} * A_4 *_{D_2} D_{18} * \Z^{*3}.
\end{equation*}

\subsection[$n=40$]
{\texorpdfstring{\protect{\boldmath{$n=40$}}}{n=40}}
Here $\PSUT(R_{40}) = \Gamma_{40, 1}$, while $\PUTz(R_{40}) =
\PUT(R_{40}) = \Gamma_{40, +} = \Gamma_{40, 0}$.
The quotient graph of groups $\oGr_{40}$ for $\PUTz(R_{40}) =\PUT(R_{40}) = \Gamma_{40, +} = \Gamma_{40, 0}$ is shown below broken into two subgraphs. The two
subgraphs are to be glued by identifying vertices with the same
label, e.g., Vertex $A$ in Subgraph 1 is identified with Vertex $A$ in
Subgraph 2. 

\begin{figure}[H]

\moveright.3in\vbox{\scalebox{.90}{\includegraphics[scale = 0.65,
trim={1.6in -1.7in 0 -1.6in},clip]{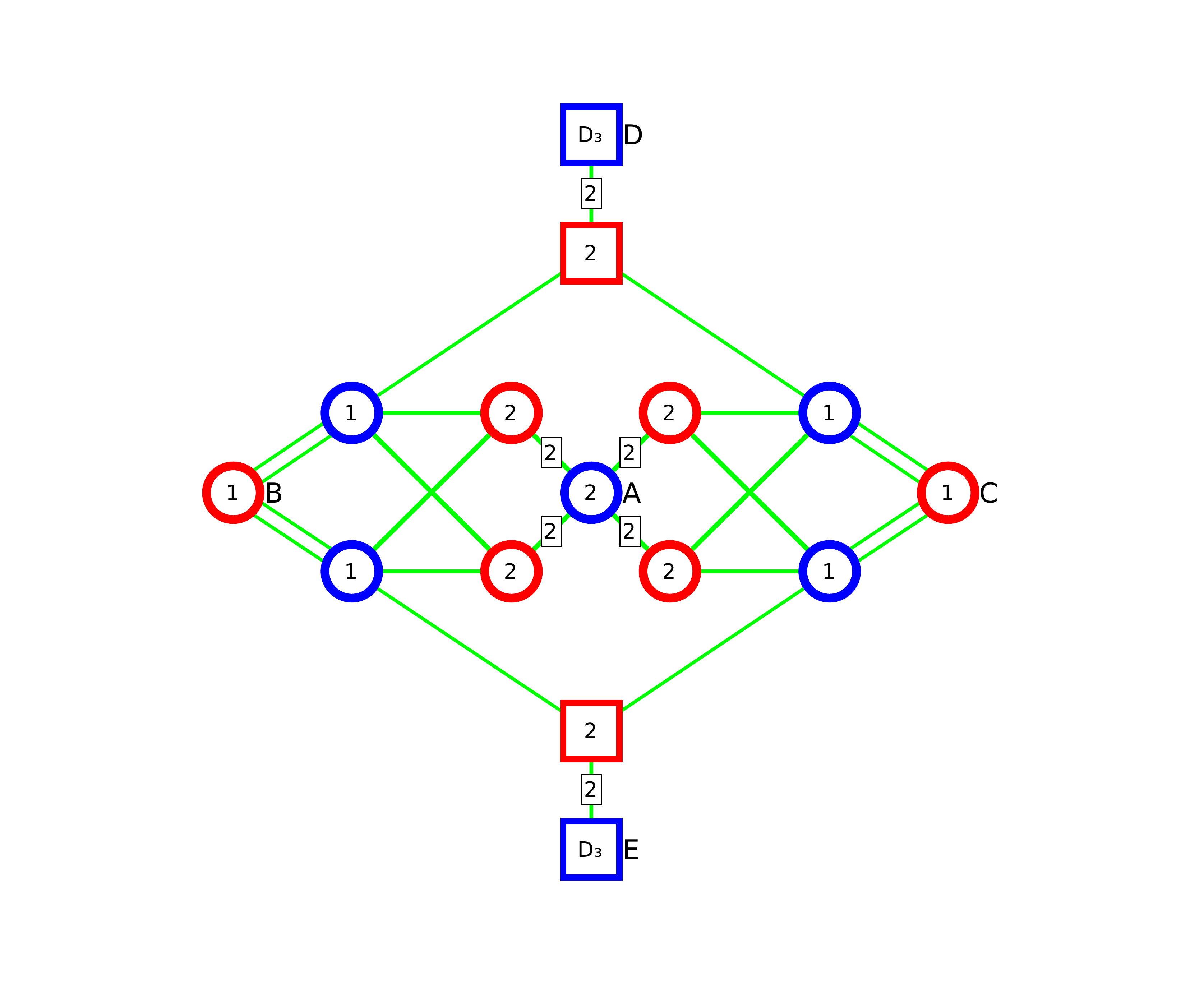}}}

\vspace{-0.4in}

\caption{ Subgraph 1 for $\PUTz(R_{40}) =\PUT(R_{40}) = \Gamma_{40, +} = \Gamma_{40, 0}$.}
\label{pea1}
\end{figure}

\begin{figure}[H]
  
\moveright.2in\vbox{\scalebox{.90}{\includegraphics[scale = 0.6,
trim={1.2in -2in 1in -2in}, clip]{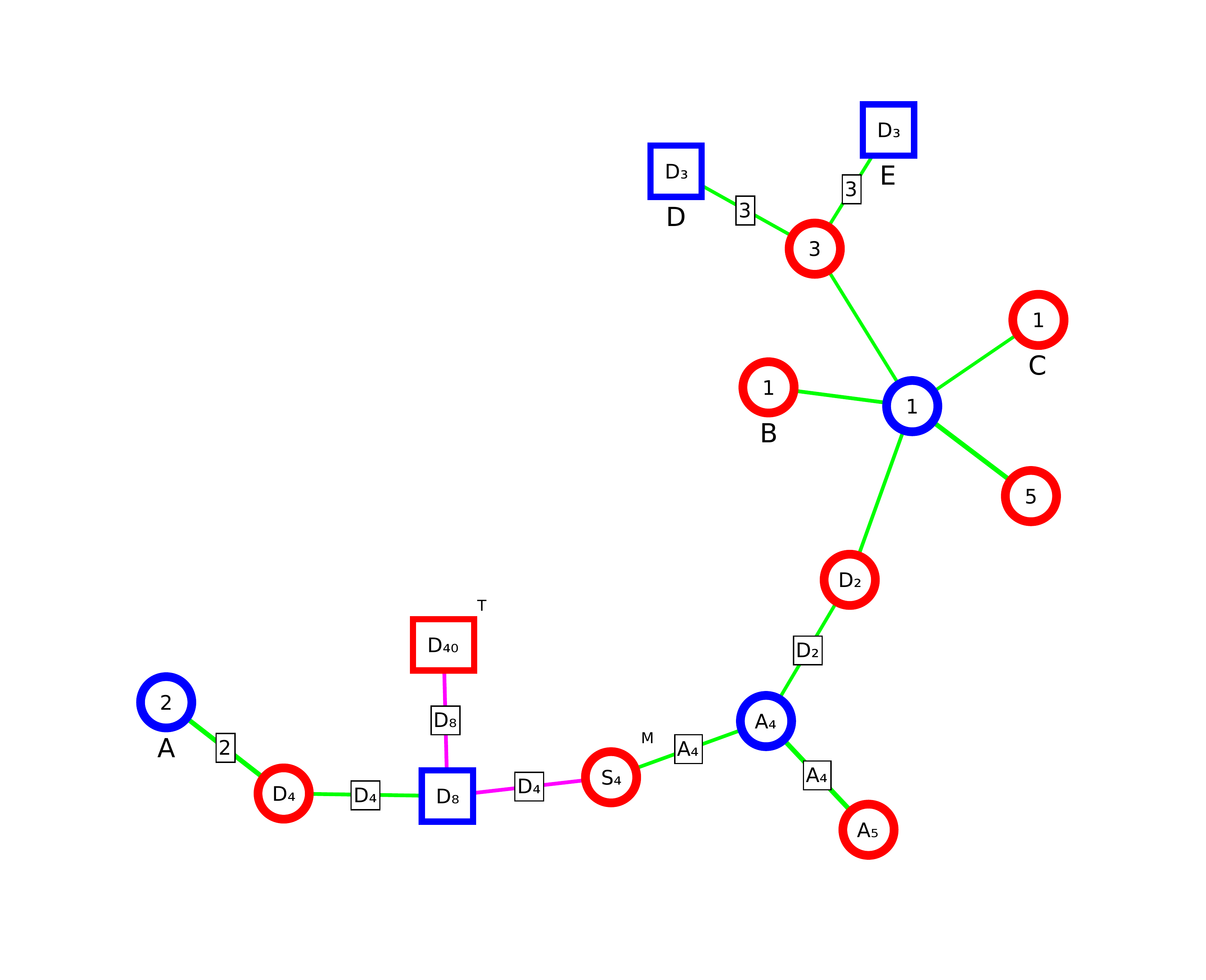}}}

\vspace{-0.4in}

\caption{ Subgraph 2 for $\PUTz(R_{40}) =\PUT(R_{40}) = \Gamma_{40, +} = \Gamma_{40, 0}$.}
\label{pea2}
\end{figure}

 In this case the residue field has order $4$, so each maximal order
contains five Eichler orders rather than three.  This means that the
graph is more highly connected than in the other cases 
with $\phi(n) = 16$.

 The automorphism group of each edge is the same as the smaller
automorphism group of its nodes except for the edge between the nodes
with automorphism groups $D_8$ and $S_4$ which has automorphism group
$D_4$.

 The graph again has genus $16$.  Theorem \ref{theorem: free
   generators} gives
\begin{align*}
  \PUT(R_{40})=& \PU_2^\zeta(R_{40})\isisom\pi_1(\oGr_{40})
  \isisom\\
 & D_{40} *_{D_4} S_4 *_{A_4} A_5 * D_3 *_{C_3} D_3 * C_5 * \Z^{*{16}}
\end{align*}
and $\chi(\PUT(R_{40}))=-287/16$.
Once again we have
\[
\PGg_{40} \sinf [\PUT(R_{40})]_f \sinf \PUT(R_{40}) .
\]

The graph $\Gr_{40}$ for $\PSUT(R_{40}) = \Gamma_{40, 1}$ is a double cover
of $\oGr_{40}$
ramified at the
six vertices in Figures \ref{pea1}, \ref{pea2} indicated by squares.
The vertex labels for the unramified nodes are
the same, and the ramified nodes have vertices labeled with subgroups
of index~$2$.  This case is different in that the subgraph of
nontrivial unit groups has a loop, specifically a square all whose
vertices have group $C_3$ (the remaining components are all trees).
Since $\PUT(R_{40})/\PSUT(R_{40})$ acts by reflection on this square,
the monodromy is trivial.

 The whole graph has genus $34$.  Theorems \ref{theorem: loop} and \ref{theorem: free
   generators} show
\begin{align*}
\PSUT(R_{40})&\isisom \Gamma_1 \isisom \pi_1(\Gr_{40})\\&\isisom
 A_5 *_{A_4} S_4 *_{D_4} D_{20} *_{D_4} S_4 *_{A_4} A_5 *  C_5^{*2} *
(C_3 \oplus \Z) * \Z^{*{33}}.
\end{align*}

\subsection[$n=48$]
    {\texorpdfstring{\protect{\boldmath{$n=48$}}}{n=48}}
    \label{ready}
Again $\PSUT(R_{48}) = \Gamma_{48, 1}$ while $\PUTz(R_{48}) =\PUT(R_{48}) = \Gamma_{48, +} = \Gamma_{48, 0}$.
The quotient graph of groups $\oGr_{48}$
for $\PUTz(R_{48}) =\PUT(R_{48}) = \Gamma_{48, +} = \Gamma_{48, 0}$ is shown in the same format as for $n=40$.
   
  \begin{figure}[H]
    
\moveright.3in\vbox{\scalebox{.90}{\includegraphics[scale = 0.65,
trim={1.2in -1in 1in -1.7in}, clip]{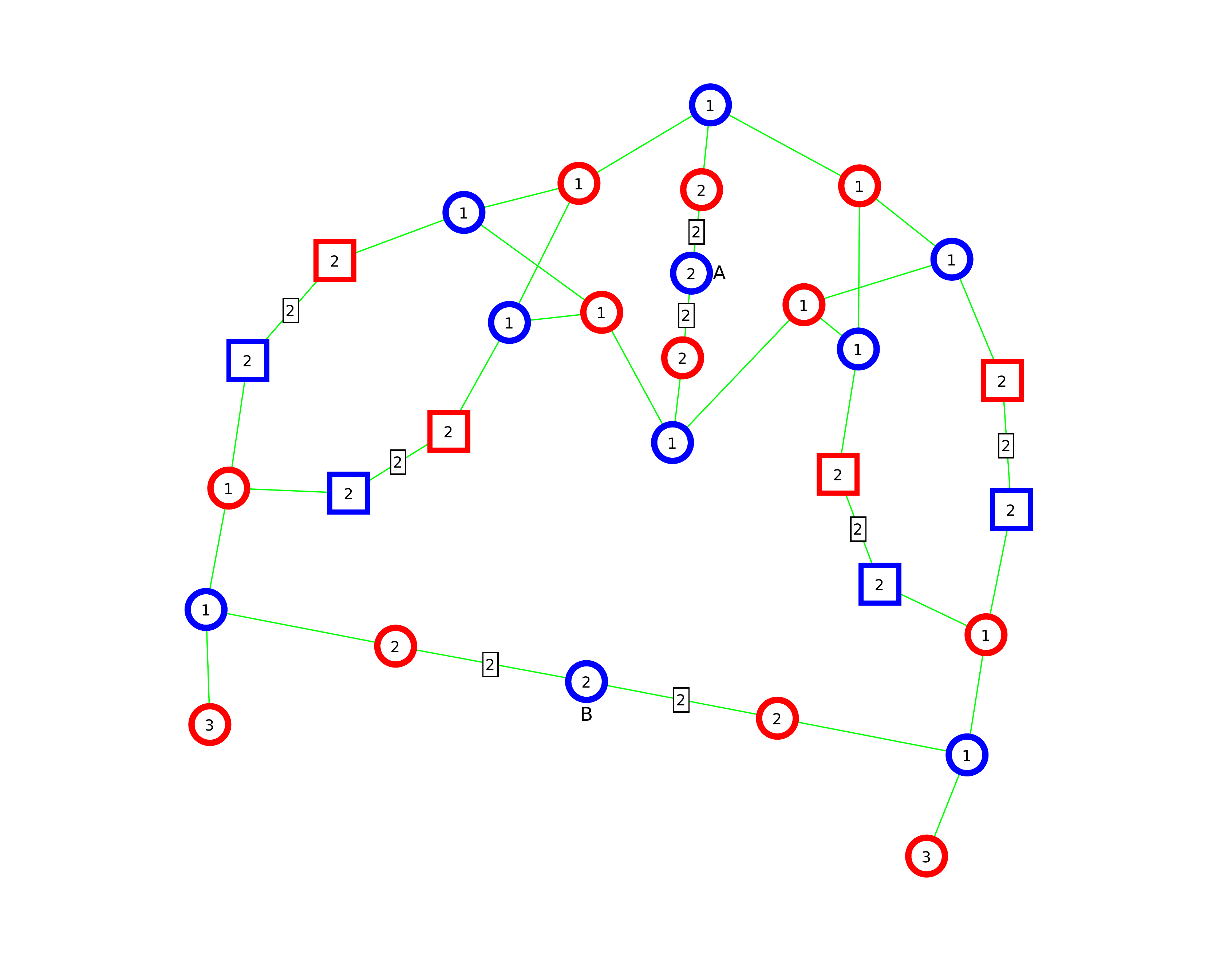}}}

\caption{ Subgraph 1 for $\PUTz(R_{48}) =\PUT(R_{48}) = \Gamma_{48, +} = \Gamma_{48, 0}$.}
\label{bean1}
\end{figure}

\begin{figure}[H]

\moveright.3in\vbox{\scalebox{.90}{\includegraphics[scale = 0.7,
trim={1.2in -1.4in .8in -1.3in},clip]{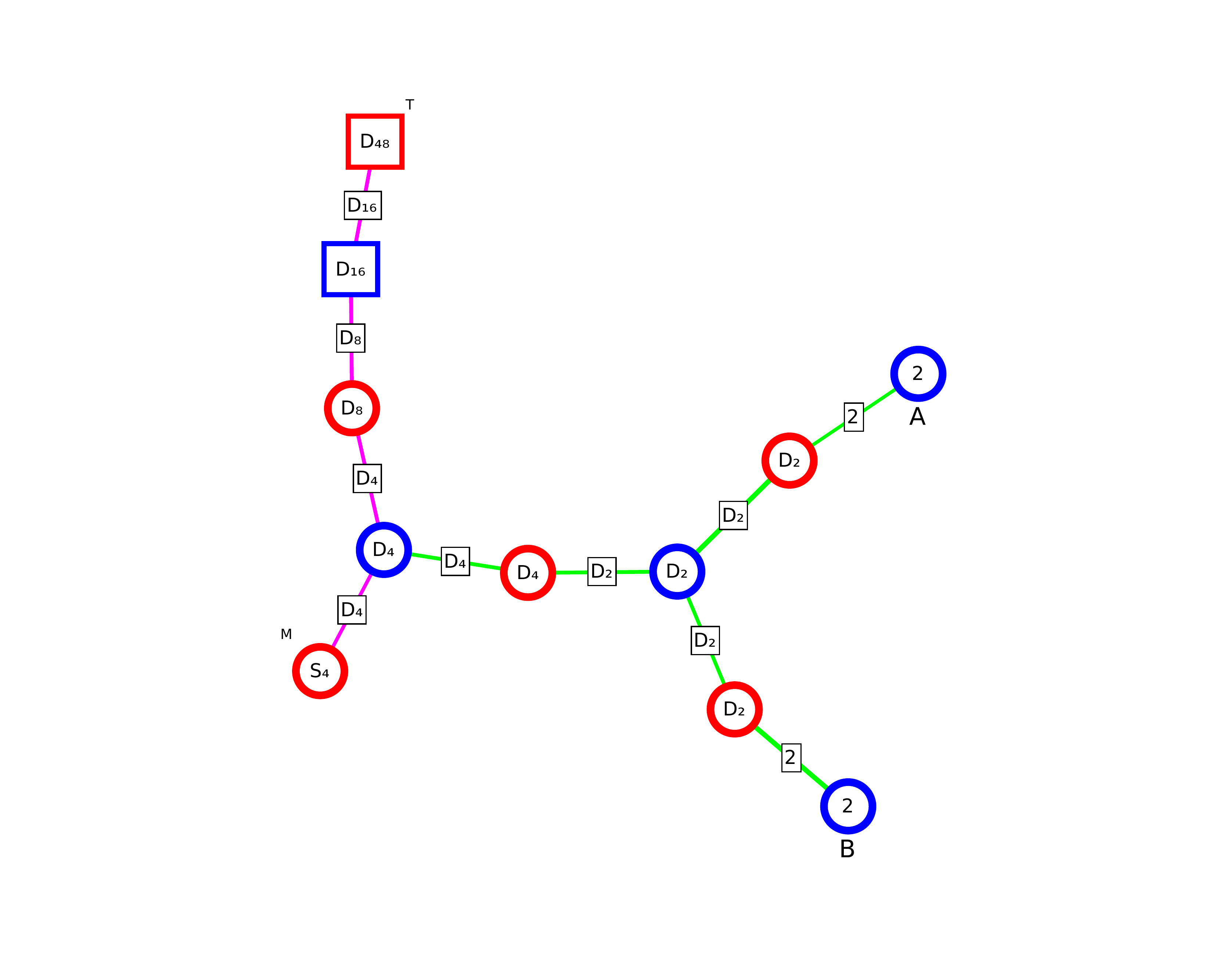}}}

\vspace{-0.3in}

\caption{ Subgraph 2 for $\PUTz(R_{48}) =\PUT(R_{48}) = \Gamma_{48, +} = \Gamma_{48, 0}$.}
\label{bean2}
\end{figure}

 There are $39$ classes
of maximal orders, but of these $14$ have only $\pm 1$ as units.
Modulo $\pm 1$, there are also $14$ orders with $2$ units, three with $4$,
two each with $3$ and $8$, and one each with $16, 24, 32, 96$.
The automorphism group of each edge is the same as the smaller
of the two automorphism groups of its incident nodes.

 The graph has genus $8$, and Theorem \ref{theorem: free
   generators} and the diagram indicate that
\begin{equation*}
\PUTz(R_{48})=\PUT(R_{48})\isisom \pi_1(\oGr_{48})\isisom D_{48}
*_{D_4} S_4 * C_3^{*2} * C_2^{*4} * \Z^{*8}
\end{equation*}
and $\chi(\PUT(R_{48}))=-365/32$.
For the same reason as in the cases $n = 32, 36, 40$, this presentation
shows that
\begin{equation}
  \label{mustard}
  \PGg_{48} \sinf [\PUT(R_{48})]_f \sinf \PUT(R_{48}).
  \end{equation}

 The graph of groups $\Gr_{48}$ for $\PSUT(R_{48}) = \Gamma_{48, 1}$ is a double cover of $\oGr_{48}$  ramified at the
 ten vertices indicated by squares in Figures \ref{bean1}, \ref{bean2}.
 As before the vertex labels for the
unramified nodes are the same, and the ramified nodes have vertices
labeled with subgroups of index~$2$.  Counting vertices and
edges we see that the graph has genus $20$.
Thus by Theorem \ref{theorem: free generators} the group is
\begin{equation*}
\PSUT(R_{48}) \isisom \pi_1(\Gr_{48})\cong S_4 *_{D_4}
D_{48} *_{D_4} S_4 * C_3^{*4} * \Z^{*{20}}.
\end{equation*}

\subsection[$n=60$]
           {\texorpdfstring{\protect{\boldmath{$n=60$}}}{n=60}}
As with $n=28$, the
unique prime above $2$ in $F_n$ splits in $K_n$.
Hence
$\PSUT(R_{60}) = \Gamma_{60,1}$, $\PUTz(R_{60}) =
\Gamma_{60,+}$, and $\PUT(R_{60}) = \Gamma_{60,0}$ are all distinct with
\[
\frac{\PUT(R_{60})}{\PUTz(R_{60})} \isisom \Z/2\Z\qquad\text{and}\qquad
  \frac{\PUTz(R_{60})}{\PSUT(R_{60})} \isisom \Z/2\Z .
  \]

 The class number of $\H_{60}$ is $9$, but as in the $n=28$ case not all
 types are connected to $\M$. In this case, 7 of the 9 types occur in
 this (genus 5) quotient h-graph of groups
$\oGr_{60}$
 for $\PUT(R_{60}) = \Gamma_{60, 0}$.

\begin{figure}[H]
    
\includegraphics[scale = 0.7,
trim={.15in -.7in 1.35in -.3in}, clip]{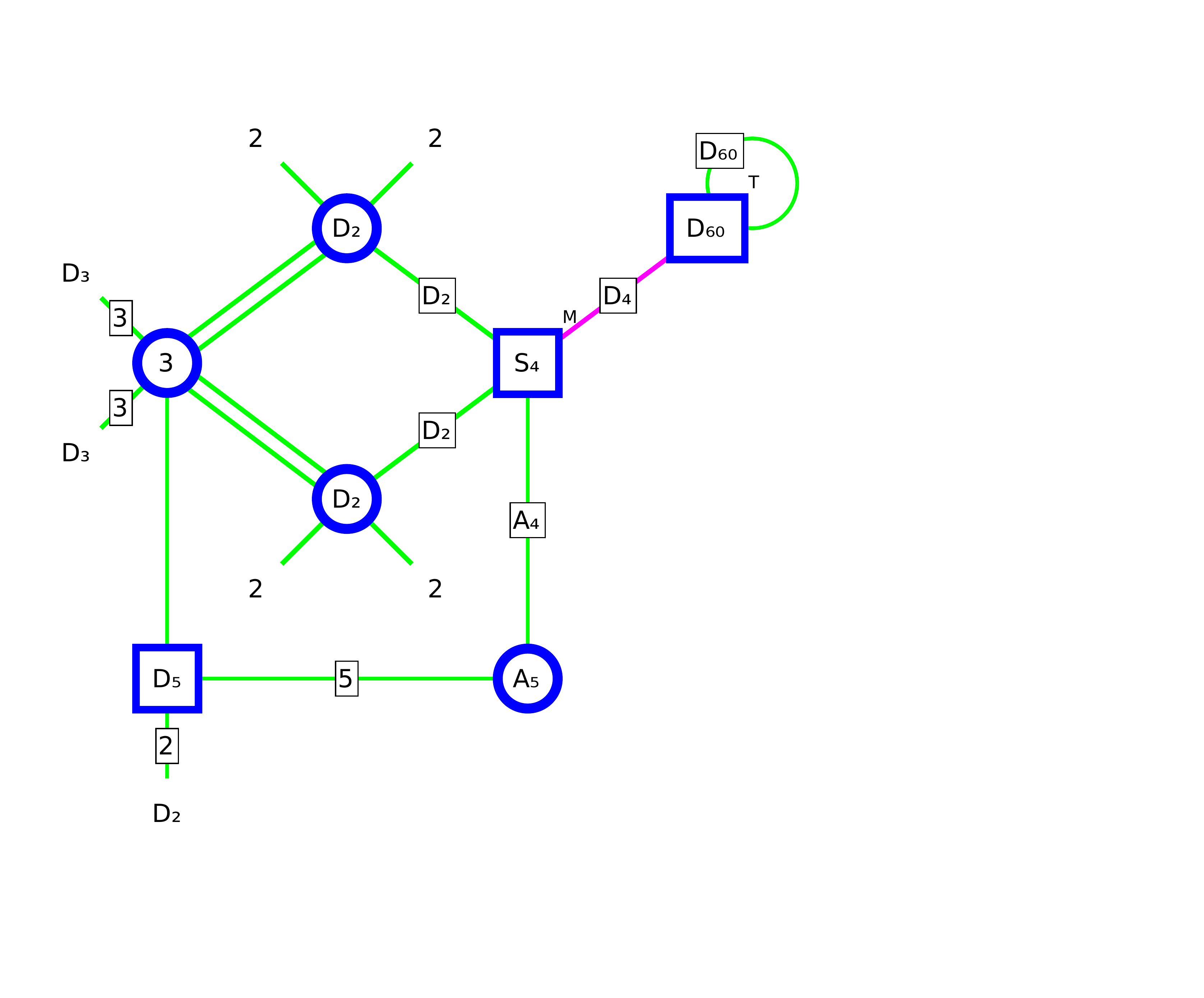}

\vspace{-0.5in}
   
\caption{ Graph of groups $\oGr_{60}$ for $\PUT(R_{60}) =\Gamma_{60,0}$.}
\label{caul}
\end{figure}

Let $\Gr_u$ be the graph of groups that is a single loop in the upper
right whose vertex and edge groups are both $D_{60}$.  As it turns out
the induced automorphism on $D_{60}$ is inner, so by Theorem
\ref{theorem: loop}, $\pi_1(\Gr_u) = D_{60}\oplus \Z$. Let $\Gr_l$ be
the graph of groups which remains  after deleting $Gr_u$ and the $D_4$
edge incident upon it. Clearly,
$$\PUT(R_{60}) = \pi_1(\Gr_u)*_{D_4} \pi_1(\Gr_l)\, .$$
Now, let $T$ be the spanning tree for $\Gr_l$
obtained by eliminating the four edges with trivial stabilizer groups
incident upon the vertex with vertex group $C_3$. Two each of these
edges are incident upon each of the two vertices with vertex groups
$D_2$. Theorem \ref{theorem: free generators} then tells us that
$$\pi_1(\Gr_l) \isisom S_4
  *_{A_4} A_5 *_{C_5} D_5 *_{C_2} D_2 * D_3 * _{C_3} * D_3 * C_2^{*4}
  * \Z^{*4}\, ,$$
so that
\begin{equation*}
  \PUT(R_{60}) \isisom (D_{60} \oplus \Z) *_{D_4} S_4
  *_{A_4} A_5 *_{C_5} D_5 *_{C_2} D_2 * D_3 * _{C_3} * D_3 * C_2^{*4}
  * \Z^{*4}
\end{equation*}
and $\chi(\PUT(R_{60}))=-15/2$.

 The graph for $\PUTz(R_{60})=\Gamma_{60,+}$ is the bipartite double cover of the
 graph $\oGr_{60}$ for $\PUT(R_{60})=\Gamma_{60,0}$.  The graph
$\Gr_{60}$
 for $\PSUT(R_{60})=\Gamma_{60,1}$ is the double
 cover of the graph for $\PUTz(R_{60}) = \Gamma_{60,+}$ ramified at the
 vertices lying above those in $\oGr_{60}$ marked
with a square in Figure \ref{caul}.

\section{Summary}
\label{summary}

We summarize our results in the following table, which shows, for each
group at each level, whether it is generated by torsion.

\begin{center}
  \begin{tabular}{r|c|c|c|c|c|c}
    $n$ & $\PSUT$ & $\PU_2^\zeta$ & $\PUT$
    & $\Gamma_1$ & $\Gamma_+$ & $\Gamma_0$\\ \hline
    $8$ & yes & yes & yes & yes & yes & yes\\ \hline
    $12$ & yes & yes & yes & yes & yes & yes\\ \hline
    $16$ & yes & yes & yes & yes & yes & yes\\ \hline
    $20$ & yes & yes & yes & yes & yes & yes\\ \hline
    $24$ & yes & yes & yes & yes & yes & yes\\ \hline
    $28$ & no & no & yes & no & no & yes\\ \hline
    $32$ & no & no & no & no & no & no\\ \hline
    $36$ & no & no & no & no & no & no\\ \hline
    $40$ & no & no & no & no & no & no\\ \hline
    $48$ & no & no & no & no & no & no\\ \hline
    $60$ & no & no & no & no & no & no
  \end{tabular}
\end{center}

\clearpage

\bibliography{Paper_2_GIT}{}
\bibliographystyle{plain}

\end{document}